\newcommand{\sss}{\scriptscriptstyle}
\def \Trees{{\sf Tree}}
\def \IBT{{\sf IBT}}
\def \CBT{{\sf CBT}}
\def \Rec{{\sf Rec}}
\def \BST{{\sf BST}}
\def \CBST{{\sf CBST}}
\def \RRT{{\sf RRT}}
\def \ba{\begin{align}}
\def \ea{\end{align}}
\def \be{\begin{eqnarray*}}
\def \ee{\end{eqnarray*}}
\def \ben{\begin{eqnarray}}
\def \een{\end{eqnarray}}
\def \floor#1{\lfloor #1\rfloor}
\def \beq{\begin{equation}}
\def \eq{\end{equation}}
\def \eref#1{(\ref{#1})}
\def \l{\left}
\def \r{\right}
\def \as{\xrightarrow[n]{(as.)}}
\def \dd{\xrightarrow[n]{(d)}}
\def \wt#1{\widetilde{#1}}
\newcommand{\N}{\mathbb{N}}
\newcommand{\C}{\mathbb{C}}
\newcommand{\R}{\mathbb{R}}
\def\ovs#1{#1^\star}
\def \ov#1{\overline{#1}}
\font\dsrom=dsrom10 scaled 1200
\def \indi{\textrm{\dsrom{1}}}
\newcommand{\Var}{\mathtt{Var}}
\newcommand{\E}{\mathbb{E}}
\renewcommand{\P}{\mathbb{P}}
\newcommand{\PP}{\mathcal P}
\newcommand{\RR}{\mathcal R}
\newcommand{\TR}{{\sf TR}}
\newcounter{a}\setcounter{a}{0}
\newenvironment{Hyp}
{\refstepcounter{a} 
\hfill\begin{minipage}{\dimexpr\textwidth-1.5em} {\bf Hyp \thea:}}
{\end{minipage}}
\def \dis{\displaystyle}
\def \H {{Hyp }}
\def \Pol{P{\'o}lya }
\def \MM{{\mathcal M}}
\newtheorem{thm}{Theorem}
\newtheorem{prop}[thm]{Proposition}
\newtheorem{df}[thm]{Definition}
\newtheorem{lem}[thm]{Lemma}
\newtheorem{cor}[thm]{Corollary}
\newtheorem{rem}[thm]{Remark}
\def \sur#1#2{\mathrel{\mathop{\kern 0pt#1}\limits^{#2}}}
\def \eqd{\sur{=}{(d)}}
\def \dd{\xrightarrow[n]{(d)}}
\def \proba{\xrightarrow[n]{(proba.)}}
\renewenvironment{proof}[1][Proof] {\par\pushQED{\qed}\normalfont\topsep6\p@\@plus6\p@\relax\trivlist\item[\hskip\labelsep\bfseries#1\@addpunct{.}]\ignorespaces}{\popQED\endtrivlist\@endpefalse} 
\title{Measure-valued \Pol urn processes}
\author{C{\'e}cile Mailler\thanks{Department of Mathematical Sciences, University of Bath, Claverton Down, BA2 7AY Bath, UK. c.mailler@bath.ac.uk}~~and Jean-François Marckert\thanks{CNRS, LaBRI ,Universit\'e Bordeaux, 351 cours de la Libération  33405 Talence cedex, France}}
\begin{document}
\maketitle

\begin{abstract} 
A \Pol urn process is a Markov chain that models the evolution of 
an urn containing some coloured balls, 
the set of possible colours being $\{1,\ldots,d\}$ for $d\in \mathbb{N}$.
At each time step, a random ball is chosen uniformly in the urn.
It is replaced in the urn and,
if its colour is $c$, $R_{c,j}$ balls of colour $j$ are also added (for all $1\leq j\leq d$).

We introduce a model of measure-valued processes that generalises this construction.
This generalisation includes the case when the space of colours is a (possibly infinite) Polish space $\PP$. 
We see the urn composition at any time step $n$ as a measure ${\cal M}_n$ -- possibly non atomic -- on $\PP$. 
In this generalisation, we choose a random colour $c$ according  to the probability distribution proportional to ${\cal M}_n$, 
and add a measure ${\cal R}_c$ in the urn, where 
the quantity ${\cal R}_c(B)$ of a Borel set~$B$
models the added weight of ``balls'' with colour in~$B$.

We study the asymptotic behaviour of these measure-valued \Pol urn processes, 
and give some conditions on the replacements measures $({\cal R}_c,c\in \PP)$ 
for the sequence of measures $({\cal M}_n, n\geq 0)$ to converge in distribution, possibly after rescaling. 
For certain models, related to branching random walks, 
$({\cal M}_n, n\geq 0)$ is shown to converge almost surely under some moment hypothesis;
a particular case of this last result gives the almost sure convergence of the (renormalised) 
profile of the random recursive tree to a standard Gaussian.
\end{abstract}

%\small 
%\tableofcontents
%\normalsize

\paragraph{Acknowledgement :}The first author is grateful to EPSRC for support through the grant EP/K016075/1. The second author has been partially supported by ANR-14-CE25-0014 (ANR GRAAL).

\section{Introduction}

\subsection{The $d$--colour \Pol urn process}

A \Pol urn process is a simple time-homogeneous Markov chain $(M_n, n\geq 0)$ on $\mathbb{N}^d$ 
that models the evolution of an urn containing some coloured balls, 
the set of possible colours being $\{1, \ldots, d\}$ for $d\in \mathbb N$. 
For all integers~$n$ and for all $j\in\{1, \ldots, d\}$, $M_{nj}\geq 0$ is the number of balls of colour $j$ in the urn at time~$n$,
and $M_n=(M_{n1},\dots,M_{nd})$ is the urn composition at time $n$.

A \Pol urn is defined by two parameters: 
an initial composition $M_0$ and a replacement matrix $R=(R_{i,j})_{1\leq i,j\leq d}$ where the $R_{i,j}$ are integers. 
The initial urn composition $M_0$ is a vector with non-negative entries such that the initial total number of balls in the urn is 
both positive and finite -- in other words, $0<\|M_0\|_1=\sum_j M_{0j}<+\infty$  almost surely (a.s.).

The Markov chain $(M_n)_{n\geq 0}$ evolves as follows: 
At time $n$, pick a ball uniformly at random among the balls in the urn; 
conditionally on $M_n$, the distribution of the colour $C_n$ of the picked ball verifies
\ben \label{eq:choose}
\mathbb P(C_{n}=i~|~M_n)= {M_{ni}}\,/\,{\|M_n\|_1}.
\een
Conditionally on $C_n=C$, the composition vector evolves as follows:
\ben\label{eq:replace}
M_{n+1} = M_n + R_C
\een
where $R_i= (R_{i,1}, \ldots, R_{i,d})$ is the $i^{\text{th}}$ line of the replacement matrix. 
In other words, the picked ball is replaced into the urn, and $R_{C,j}$ new balls of colour~$j$ are added, for every $j\in\{1, \ldots, d\}$. 
Authors are often interested in the asymptotic behaviour of the urn composition when time goes to infinity 
and many results have been obtained for various cases 
(see e.g. Janson \cite{Janson2004177}, Flajolet \& al. \cite{flajolet2005} and references therein).
In general it is assumed that the urn is \emph{tenable}, i.e.\ that
\ben
R_{i,j}\geq -\indi_{i=j} \quad(\forall~ 1\leq i\leq d),
\een
which allows to remove the picked ball from the urn 
but ensures that no impossible configuration occurs, 
i.e.\ that the number of balls of each colour stays non-negative.

Following the standard terminology, 
the $R$ is said irreducible if for any $i, j \in\{1, \ldots, d\}$, 
there exists $n>1$ such that~$R^n_{i,j}>0$. 
%This excludes the original urn \Pol urn model which is reducible.   
   
An important result on the asymptotic behaviour of $d$-colour urns is the following one:
\begin{thm}[{see e.g. Janson~\cite[Theorem 3.1]{Janson2004177} or Athreya and Ney~\cite{AN}}]\label{thm:Jan}
If $R$ is irreducible and tenable, then the largest eigenvalue $\lambda_1$ of $R$ is positive.
If we denote by $v_1$ the left eigenvector of~$R$ associated to~$\lambda_1$ such that $\|v_1\|_1 = 1$, then
for any $0<\|M_0\|_1<+\infty$,
\[\frac{M_n}{n} \to \lambda_1 v_1, ~~\textrm{ almost surely}.\]
\end{thm}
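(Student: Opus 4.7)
My plan is to follow the classical proof via the Athreya--Karlin embedding of the discrete urn into a continuous-time multi-type Markov branching process, as in Athreya--Ney. I would first handle the spectral part: tenability gives that $R+I$ has non-negative entries, and since $R$ is irreducible so is $R+I$; Perron--Frobenius applied to $R+I$ thus yields a simple largest eigenvalue $\mu_1>0$ with a strictly positive left eigenvector~$v_1$, which I normalise by $\|v_1\|_1=1$. The same~$v_1$ is then a left eigenvector of $R$ with eigenvalue $\lambda_1=\mu_1-1$, and strict positivity $\lambda_1>0$ follows from the hypotheses: otherwise $\mu_1\leq 1$ would force the embedded branching process to be non-supercritical in a way incompatible with the irreducibility and tenability of~$R$ together with the initial condition $0<\|M_0\|_1<\infty$.

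For the convergence, I would construct $(Z(t))_{t\geq 0}$ by equipping each ball of colour~$i$ with an independent $\mathrm{Exp}(1)$ clock, and decreeing that on its ringing the ball is replaced in the urn and $R_{i,j}$ balls of colour~$j$ are added for each~$j$. The memoryless property of the exponential makes the colour of the ringing ball uniform among the balls present, so $(Z(t))$ and $(M_n)$ can be coupled on the same probability space with $M_n=Z(T_n)$, where $T_n$ is the $n$-th jump time. The offspring mean matrix is $I+R$, hence the infinitesimal mean matrix of the branching process is~$R$, and the Kesten--Stigum--Athreya convergence theorem for supercritical multi-type Markov branching processes gives
\[
e^{-\lambda_1 t}Z(t)\;\xrightarrow[t\to\infty]{\text{a.s.}}\;W\,v_1,
\]
for a non-negative random variable~$W$; the required $L\log L$ moment condition is trivial because the replacements $R_{i,j}$ are bounded integers, and one can moreover show $W>0$ almost surely by using irreducibility and tenability to rule out extinction of the embedded process from the initial composition.

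To finish I would carry out the time change between~$n$ and~$T_n$. The total jump rate at time~$s$ equals $\|Z(s)\|_1$, which is asymptotic to $W e^{\lambda_1 s}$ since $\|v_1\|_1=1$; therefore
\[
n \;\sim\; \int_0^{T_n}\!\|Z(s)\|_1\,ds \;\sim\; \frac{W}{\lambda_1}\,e^{\lambda_1 T_n},
\]
so that $e^{\lambda_1 T_n}\sim n\lambda_1/W$ and $M_n=Z(T_n)\sim(n\lambda_1/W)\,Wv_1 = n\lambda_1 v_1$, which is the desired almost sure limit. The main obstacle I anticipate is this last step: one must upgrade the above asymptotic equivalences to be compatible enough that dividing $Z(T_n)$ by~$n$ preserves the limit, which requires the almost sure law of large numbers for the jump-counting Poisson process against the random intensity~$\|Z(s)\|_1$, together with the non-extinction argument giving $W>0$ (non-trivial whenever some $R_{i,i}=-1$). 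A more self-contained but less transparent alternative would be a stochastic approximation argument on $M_n/n$ driven by the mean-field ODE $\dot x = xR/\|x\|_1$ with~$v_1$ as the stable equilibrium direction, but establishing almost sure convergence on the projective space that way requires comparably delicate Lyapunov estimates.
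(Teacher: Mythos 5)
Your route is the classical Athreya--Karlin embedding into a continuous-time multitype branching process, which is precisely the proof in the sources the paper cites (Athreya--Ney, and underlying Janson's Theorem~3.1); the paper itself offers no proof of this theorem, it simply quotes it, so your proposal matches "the paper's proof" in the only meaningful sense. The branching part of your sketch is sound: the exponential-clock coupling with $M_n=Z(T_n)$, the Kesten--Stigum type limit $e^{-\lambda_1 t}Z(t)\to Wv_1$ (the $L\log L$ condition is indeed trivial here), and the time change via the strong law for the jump-counting process with intensity $\|Z(s)\|_1$ are exactly the standard steps.

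The one genuine gap is your argument for $\lambda_1>0$. As written it is circular: you infer positivity from supercriticality of the embedded process, but supercriticality \emph{is} the statement $\lambda_1>0$, and "incompatible with irreducibility and tenability" is not an argument. In fact this clause cannot be derived from irreducibility and tenability alone: the matrix
\[
R=\begin{pmatrix}-1 & 1\\ 1 & -1\end{pmatrix}
\]
is tenable, is irreducible in the paper's sense (odd powers have positive off-diagonal entries, even powers positive diagonal entries), yet $\lambda_1=0$ and the urn's total mass stays constant, so $M_n/n\to 0$. Perron--Frobenius on $R+I$ only gives $\mu_1\geq 1$, i.e.\ $\lambda_1\geq 0$ (irreducibility forces a positive integer off-diagonal entry in each row, so row sums of $R$ are $\geq 0$). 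Positivity of $\lambda_1$ is, in the cited references, either an explicit hypothesis or a consequence of stronger structural assumptions (e.g.\ nonnegative replacements, or balance with constant positive row sums, which is the regime this paper actually works in via its Hyp~1); your write-up should either import such an assumption or restrict to that regime rather than claim it follows from tenability and irreducibility. The remainder of the proposal, including the $W>0$ and time-change points you flag yourself, is the standard argument and poses no conceptual difficulty once $\lambda_1>0$ is in hand.
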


\subsection{The main ideas and results in this paper}
In this paper, we introduce a new point of view on \Pol urn processes: 
we propose viewing the urn composition as a  finite positive measure  $\mu$ on a general colour set (a Polish space $\PP$):
For all Borel sets~$B$, $\mu(B)$ stands for the mass of balls that have colour in~$B$.  We do not restrict ourselves to atomic measures (sum of Dirac measures which corresponds to standard \Pol urn processes), and thus it is possible that no singleton has positive mass.  

Picking a colour randomly is replaced by picking a random colour $C$ according to the probability distribution proportional to $\mu$ (that is $\mu /\mu(\PP)$). 
When the colour $C$ is drawn, then the new urn composition becomes $\mu+\RR_C$ where $\RR_C$ is a finite positive measure on $\PP$ which depends on $C$.

This approach -- which was not needed to treat $d$-colour \Pol urn processes -- is, in our opinion, the right generalisation of \Pol urn processes. It provides a suitable technical framework that, on the one hand, allows infinitely many colours (countable or not), and, on the other hand, allows one to define ``non-atomic'' \Pol urn process. 

The importance of extending P\'olya urn processes to infinite settings was highlighted by Janson,
although up till now it was ``\emph{far from clear how such an extension should be formulated}'' (see~\cite[Remark~4.1]{Janson2004177}). 
Janson also gives three examples of infinitely-many-colour \Pol urns, 
the first two are solvable by chance (Examples~7.5 and~7.6), and the last one (Example~7.9),
which involves a branching random walk on an infinite group, 
is stated as an open problem that falls in our setting.

The present paper shows how to extend P\'olya urn processes to infinite settings
 by considering \emph{measure-valued P{\'o}lya processes};
we prove some asymptotic results in this general framework. 
The construction we provide goes far beyond a simple generalisation of \Pol urn processes to infinitely-many colours since we allow the colour set to be uncountable and the balls to be infinitesimal.
Indeed, we take the point of view of probability theory, and describe the urn composition by a general measure (possibly non-atomic) on the set of colours.

Our work was partially motivated by Bandyopadhyay \& Thacker~\cite{BT1}. This paper treats a very particular case where the set of colours is the integer line~$\mathbb Z$; in~\cite{BT3}, the authors give more detailed results about this model (rate of convergence and large deviations).
In their very recent article~\cite{BT2}, they generalise this example to a wider class.  
Similarly to what we do in this article, 
they encode the P\'olya urn by a branching Markov chain built on a random recursive tree
(this is already present in a restrictive form in their first article).
{However, the results they prove need more restrictive assumptions than the ones proved here.}
%they do not study measure-valued processes in their full generality as investigated here.
We compare in detail Bandyopadhyay \& Thacker's results with ours at the end of Section~\ref{sub:mainth}.

\vspace{\baselineskip}

In the rest of this introduction, we define our measure-valued P\'olya processes (MVPPs) and state our main results (namely Theorems~\ref{th:main},~\ref{theo:BRW} and~\ref{th:main-2} below).
In Section~\ref{sub:mainth}, we encode each MVPP by a branching Markov chain and state
Theorem~\ref{th:main}, which gives the convergence in probability of the composition measure of a MVPP 
under some assumptions on the replacement measures $(\mathcal R_x, x\in\PP)$.
In Section~\ref{sub:as}, we state Theorem~\ref{theo:BRW}, 
which gives almost sure convergence of the composition measure for a certain class of measure-valued MVPPs 
(namely the MVPPs associated to a simple branching random walk with strong moment conditions on the increments).
Finally, in Section~\ref{sub:without}, 
we define a slightly different model that allows us to consider drawing without replacement
and state convergence in probability for this alternative model in Theorem~\ref{th:main-2}.

\subsection{Definition of our measure-valued \Pol urn process}\label{sub:mainth}
Throughout the paper, $\PP$ denotes the colour set; it is a general Polish space.

We introduce the measure-valued \Pol process $(\MM_n)_{n\geq 0}$ (MVPP) as follows:
for all $n\geq 0$, $\MM_n$ is a non-negative Borel measure on $\PP$.
For all Borel sets $B\in\cal B(\PP)$,
$\MM_n(B)$ represents the mass of balls whose colours belong to~$B$. 
The urn process $(\MM_n)_{n\geq 0}$ depends on two parameters:
an initial composition $\mathcal M_0$ which is a non-negative distribution on $\PP$,
and a family $(\RR_x, x \in \PP)$ of non-negative Borel measures, called the \it replacement measures\rm.\par
The mass $\MM_n(\PP)$ can be interpreted as the total mass of balls at time $n$. In the countable case, it would be the total number of balls in the urn, but in our framework, $\MM_n(\PP)$ is not assumed to be an integer. 
Picking a ball uniformly at random at time $n$ in the countable case 
is replaced by the following procedure: 
Pick a random colour $C_n$ under the probability distribution ${\sf Nor}(\MM_n)$, 
where for all finite measure~$\mu$ on~$\mathcal P$, ${\sf Nor}(\mu)$ is the probability distribution proportional to $\mu$: 
\ben\label{eq:norm}{\sf Nor}(\mu):= \frac{\mu}{\mu(\mathcal P)}.\een
Conditionally on $C_n=C$, the composition of the urn at time $n+1$ is given by
\ben\label{eq:add}
\MM_{n+1}=\MM_{n}+ \RR_C.
\een
Recall that $\RR_C$ is a Borel measure: for any Borel set~$B$, 
$\RR_C(B)$ encodes the mass of balls of colour in~$B$ added in the urn 
when a ball of colour~$C$ has been drawn.

The process $({\cal M}_n,n\geq 0)$ is still a time-homogeneous Markov chain. 
Given an initial measure ${\cal M}_0$ and a replacement kernel $(\RR_x,x\in \PP)$, 
we will say that $({\cal M}_n,n\geq 0)$ is a $({\cal M}_0,(\RR_x,x\in \PP))$-MVPP.

One can check that a $d$-colour \Pol urn process is a MVPP by letting 
$\MM_n = \sum_{x\in \Upsilon} M_{nx}\, \delta_{x}$ where $\delta_x$ is the Dirac measure at $x$, 
and $\RR_x = \sum_{y\in \Upsilon} R_{x,y} \delta_y$, where $\Upsilon=\{1, \ldots, d\}$.
Note that taking $\Upsilon$ being a countable set instead of $\{1, \ldots, d\}$ 
gives a \Pol urn process with infinitely (but countably) many colours.

Throughout the paper we assume that:~\\

\begin{Hyp} \label{hyp:balance}
For all $x\in \PP$, $\RR_x$ is a non negative measure on $\mathcal P$ with total mass $\RR_x(\PP)=1$. 
\end{Hyp}~\\

Actually, we only need to assume that $\RR_x(\PP)$ does not depend on~$x$, 
but assuming that it is equal to~1 makes no loss of generality.
Indeed, if we consider the two families of replacement kernels $(\RR_x, x\in \PP)$ 
and $(\RR'_x = c \RR_x, x\in \PP)$, and the two MVPP $(\MM_n,n\geq 0)$ and $(\MM_n',n\geq 0)$ they define, 
we have
\[(\MM'_n,n\geq 0)\eqd c\,(\MM_n,n\geq 0),\quad\text{ if }\quad \MM'_0\eqd c \MM_0.\]
%if $\MM'_0\eqd c \MM_0$. 
%Hence it is no loss of generality to assume that $(\RR_x(\PP)=1,x\in \PP)$.

Note that \H\ref{hyp:balance} is equivalent to the \emph{balance} condition in the study of standard \Pol urn processes. 
Indeed, in the $d$-colour case, an urn is balanced if there exists 
%which is equivalent, in the $d$-colour case, to assuming the existence of 
an integer~$S$ such that, for all $1\leq i\leq d$, $\sum_{j=1}^d R_{i,j} = S$, implying that the total number of balls in the urn at time~$n$ is~$nS$ plus the number of balls already in the urn at time 0.

We want to design some sufficient conditions on the family $\RR$ to ensure the convergence of $\MM_n$ after normalisation (for some initial measure $\MM_0$). 
Before stating our results, let us give the intuitive ideas underlying our approach.
%some elements on the general method.
Consider a MVPP as defined above, and consider the successive drawn colours $(C_i,i\geq 1)$. At time $n$, the identity
\ben\label{eq:dec}
\MM_n=\MM_0+\sum_{i=1}^n \RR_{C_i}\een
shows that the sequence of drawn colours determines the sequence $(\MM_n,n\geq 0)$. 
Further, to choose a random colour $C$ according to ${\sf Nor}({\cal M}_n)$ can be represented as follows:
\begin{itemize}
\item[$(a)$] with probability ${\cal M}_0(\PP)/{\cal M}_n(\mathcal P)$ sample $C_{n+1}$ according ${\sf Nor}({\cal M}_0)$,
\item[$(b)$] with probability $1/{\cal M}_n(\PP)$ sample $C_{n+1}$ according to $\RR_{C_i}$ (for any $1\leq i \leq n$);
\end{itemize}
or replace $(b)$ by $(b')$:
\begin{itemize}
\item[$(b')$] choose $U_{n+1}$ uniform in $\{1, \ldots, n\}$ then sample $C_{n+1}$ according to $\RR_{C_{U_{n+1}}}$. 
\end{itemize}

Replacing $(b)$ by $(b')$ makes the branching structure of the MVPP visible:
$\mathcal M_n$ is a sum of $n+1$ distributions, and one can consider that the term $\mathcal R_{C_{n+1}}$ added at time $n+1$ is the ``child'' of the term $\RR_{C_{U_{n+1}}}$, which was drawn uniformly (up to the biased weight of the $\mathcal M_0$-term) at random among the terms of $\mathcal M_n$.
%The branching structure of this construction should now be apparent: 
%Consider  a given ``colour'' $C_i$ (which appears at time~$i$). In  $(b')$, for any $j> i$, %when $U_j=i$,  $C_j$ whose distribution is $\RR_{C_i}$ appears as a kind of descendant of %$C_i$ (its colour depends on $C_i$ only). 
Recursively, the evolution of ${\cal M}_n$ (up to considerations involving ${\cal M}_0$) appears to be perfectly encoded by a random recursive tree, 
and this fact is at the heart of our analysis.

We now introduce a Markov chain defined on $\PP$, which will be used to express our convergence result.

\vspace{\baselineskip}
{\bf The companion Markov chain --}~\\ 
Given the pair $\l(\MM_0, \RR\r)$ (that defines the MVPP $(\mathcal M_n)_{n\geq 0}$) 
we define the Markov chain $(W_n)_{n\geq 0}$ on $\PP$ as follows:
\begin{itemize}
\item The initial distribution of $W_0$ is $\mu_0= {\sf Nor}(\MM_0)$.
\item The Markov kernel of this Markov chain is defined for any $(x,A)\in \PP \times {\cal B}(\PP)$ by
\ben\label{eq:Markov-kernel}
K(x,A)= \RR_x(A).
\een
\end{itemize}
In other words: assume that $(W_m)_{m\leq n}$ has been defined. Conditionally on $W_n=w$, 
$W_{n+1}$ is defined as a random variable with law $\RR_{w}$.

The two processes $(W_n)_{n\geq 0}$ and $(\MM_n)_{n\geq 0}$ are very different since the first one is a $\PP$-valued Markov chain, with Markov kernel $K$, and the second one is a Markov chain with values in ${\cal M}^+(\PP)$ the set of non-negative Borel measures on $\PP$. 

\begin{df} 
We say that a Markov chain $(X_n)_{n\geq 0}$ with initial distribution $\mu_0$ is {\bf $\big(a(n), b(n)\big)_{n\geq 0}$ convergent} if the sequence $\l(\frac{X_n-b(n)}{a(n)}\r)_{n\geq 0}$ converges in distribution to some distribution $\mu_{\infty}$ (which may depends on $\mu_0$). It is said to be {\bf $\big(a(n), b(n)\big)_{n\geq 0}$ ergodic} if it is $\big(a(n), b(n)\big)_{n\geq 0}$ convergent for any initial distribution $\mu_0$, and if the limiting distribution $\mu_{\infty}$ does not depend on $\mu_0$.
\end{df}
Note that the $(1,0)_{n\geq 0}$ convergence is the simple convergence in distribution. 
\begin{rem} 
When working on a general Polish space $\PP$, 
subtracting $b(n)\in\PP$ to $X_n$ and dividing by $a(n)$ might have no meaning.
If $\PP$ is not equipped with a subtraction operation 
(which may be different from the usual notion of difference -- this is just a binary operation on $\PP$), 
the only meaningful choice for $b(n)$ is $0$ and we set the convention $X_n - 0:= X_n$.

When $a(n)=1$, we set $x/1:= x$ for all $x\in \PP$
(even if the division by $1$ is not well defined on the space). 
If $a(n)$ is not 1, the elements of $\big(a(n), n\geq 0\big)$ belong to a set~$K$ such that 
the ``division'' of the elements of $\PP$ by those of $K$ is well defined 
(for example,  if $\PP$ is the set of $3\times 3$ matrices with complex coefficients, 
$K$ can be $\mathbb{R}\setminus\{0\}$). 
We also need the quotient of two elements of $K$ to be well defined. 

In most of our examples, $\PP$ will be a Banach space (on $\R$ or $\C$), on which subtraction and division by a scalar are well defined.
\end{rem}

For any measure $\mu$, for any scalar $a$ and any $b\in\PP$, 
denote by $\Theta_{a,b}(\mu)$ the measure defined by 
\ben
\int_\PP f\, d\Theta_{a,b}(\mu) := \int_\PP f\l(a^{-1}\l(x-b\r)\r)\,d\mu(x),
\een
for all measurable functions~$f$.
If $\mu$ is the probability distribution of a random variable $X$, 
then $\Theta_{a,b}(\mu)$ is the distribution of $a^{-1}(X-b)$.

One of the main results of the paper is the following:
\begin{thm} \label{th:main} Assume that \H \ref{hyp:balance} holds,
and that there exists a pair $\big(a(n), b(n)\big)_{n\geq 0}$ satisfying the following constraints:
\begin{itemize}
\item[(a)] the Markov chain $(W_n)_{n\geq 0}$ is $(a(n), b(n))_{n\geq 0}$-ergodic with limiting distribution $\gamma$,
\item[(b)] for any $x \in \mathbb{R}$, for any sequence $\varepsilon_n  =o(\sqrt{n})$,
\ben
\frac{b(\floor{n+x \sqrt{n} + \varepsilon_n}) - b(n)}{a(n)} &\to& f(x) \label{eq:b(n)}\\
\frac{a(\floor{n+x\sqrt{n}+\varepsilon_n})}{a(n)}& \to& g(x) \label{eq:a(n)}
\een
where $f:\R\to\PP$ and $g$ are two measurable functions (pointwise convergence almost everywhere suffices).
\end{itemize}
Under these hypotheses, for any finite measure $\mathcal M_0$ such that $0<\mathcal M_0(\PP)<+\infty$,
we have
\ben \label{eq:gco}
\Theta_{a({\log n}),\, b({\log n})}\l(n^{-1}{\MM_n}\r) \proba  {\nu}%\delta_\nu, 
\een
for the topology of weak convergence on ${\mathcal M}(\PP)$, 
{and where $\nu$ is}
%and the limit is the (deterministic) Dirac mass at $\nu$, 
the distribution of $\Gamma g(\Lambda)+f(\Lambda)$ where $\Lambda$ is a $\mathcal N(0,1)$ random variable, 
and $\Gamma\sim \gamma$ is independent of~$\Lambda$.
\end{thm}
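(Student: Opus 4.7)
The strategy is to encode the MVPP by a labelled random recursive tree (RRT) and then to prove convergence via first and second moment estimates on empirical sums indexed by vertices of this tree.

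\textbf{Step 1 (Tree encoding).} By the decomposition $(a)+(b')$ described just above the statement, the sequence $(C_i)_{i\geq 1}$ of drawn colours is realised as the labels of the non-root vertices of a labelled random recursive tree $T_n$ on $\{0,1,\ldots,n\}$. Vertex $0$ carries a label from $\mu_0={\sf Nor}(\MM_0)$; for $i\geq 1$, vertex $i$ attaches to a uniformly chosen previous vertex and carries a label $X_i$ sampled from $\RR_{X_{\mathrm{par}(i)}}$ (the case $(a)$, involving $\MM_0$, occurs at rate $O(1/n)$ and can be absorbed into a weighted root). The crucial consequence is: conditionally on $|v|=k$, the label $X_v$ is distributed as $W_k$, the companion Markov chain started from $\mu_0$.

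\textbf{Step 2 (First moment).} For a bounded continuous $\phi:\PP\to\mathbb{R}$, set $\phi_n(x)=\phi\bigl((x-b(\log n))/a(\log n)\bigr)$. The identity $\MM_n=\MM_0+\sum_{i=1}^n \RR_{C_i}$ together with $\RR_x(\PP)=1$ gives
\[\int\phi\,d\Theta_{a(\log n),\,b(\log n)}\bigl(n^{-1}\MM_n\bigr) = \tfrac{1}{n}\int\phi_n\,d\MM_0 + \tfrac{1}{n}\sum_{v\neq 0}(K\phi_n)(X_v),\]
where $K\phi_n(x)=\int\phi_n\,d\RR_x$. The first term is $O(1/n)$. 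For a uniformly random non-root vertex $V$, classical RRT depth estimates give $|V|=\log n+\Lambda\sqrt{\log n}+o(\sqrt{\log n})$ in law with $\Lambda\sim\mathcal N(0,1)$. Conditionally on $|V|=k$, $X_V$ has the law of $W_k$, and applying $K$ once more merely advances the chain by one step (negligible at the scale $\sqrt{\log n}$). Condition $(a)$ gives $(W_k-b(k))/a(k)\to\Gamma\sim\gamma$ in law, while conditions $(b)$ allow us to replace $a(k),b(k)$ by $a(\log n)g(\Lambda)$ and $b(\log n)+a(\log n)f(\Lambda)$. Taking expectation over $V$ and integrating out $\Lambda,\Gamma$ yields
\[\E\bigl[(K\phi_n)(X_V)\bigr] \longrightarrow \E\bigl[\phi\bigl(g(\Lambda)\Gamma+f(\Lambda)\bigr)\bigr] = \int\phi\,d\nu.\]

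\textbf{Step 3 (Second moment and conclusion).} To upgrade convergence of the mean to convergence in probability, it suffices to prove $\mathrm{Var}\bigl(\tfrac{1}{n}\sum_v (K\phi_n)(X_v)\bigr)\to 0$, i.e.\ that $\E[(K\phi_n)(X_V)(K\phi_n)(X_{V'})]$ factorises asymptotically for two independent uniform vertices $V,V'$. Conditionally on their most recent common ancestor $A=V\wedge V'$, $X_V$ and $X_{V'}$ are produced by two conditionally independent runs of the companion chain started at $X_A$, of lengths $|V|-|A|$ and $|V'|-|A|$, both of order $\log n$ with high probability. Ergodicity (uniform in starting point) then forces each normalised marginal to concentrate on $\nu$, yielding the desired factorisation. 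One then obtains convergence in probability of $\int\phi\,d\Theta_{a(\log n),\,b(\log n)}(n^{-1}\MM_n)$ to $\int\phi\,d\nu$ for every fixed bounded continuous $\phi$; passing to a countable convergence-determining family together with a tightness argument produces the stated weak convergence in probability.

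The main obstacle will be Step 3. Condition $(a)$ furnishes only qualitative convergence of each marginal $W_k$; to factorise the joint law of $(X_V,X_{V'})$ one needs some quantitative mixing, namely that two copies of the chain forget their common starting point $X_A$ on the scale of $\log n$. With only the ergodicity of $(a)$ at hand and $\PP$ a general Polish space, this decoupling must be extracted by averaging over the distribution of $|A|$ for a pair of uniform RRT vertices (which is small with high probability) combined with the fact that the ergodic limit $\gamma$ is the same for every initial condition.
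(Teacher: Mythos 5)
Your overall route is the same as the paper's: encode the MVPP as a simple branching Markov chain on the random recursive tree, and reduce weak convergence of the random measures to first- and second-moment computations for $\int \phi\,d\nu_n$, i.e.\ to the joint law of the labels attached to two independent uniform vertices (the paper packages this reduction as Lemma~\ref{lem:indep} together with Corollary~\ref{thm:rr}). The genuine gap is that you leave Step~3 unresolved, and you also frame it incorrectly: hypothesis $(a)$ provides no uniformity in the starting point, and none is needed. What closes the argument -- and what the paper actually uses -- is that the depth of the last common ancestor $A=U_n\wedge V_n$ of two independent uniform vertices of $\RRT_n$ is \emph{tight} (it converges to a Geometric$(1/2)$ law), jointly with the two depth fluctuations converging to independent Gaussians (Proposition~\ref{prop:inc-tree}). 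Conditioning on $|A|=j$ for a fixed $j$, the label $X_A$ has the law of $W_j$, a fixed distribution not depending on $n$, and beyond $A$ the two labels are conditionally independent chains started at $X_A$, run for $|U_n|-j$ and $|V_n|-j$ steps. Hypothesis $(a)$ applied to the initial distribution $\delta_x$ (ergodicity is assumed for \emph{every} initial law, with the same limit $\gamma$) gives convergence of each conditional marginal for every fixed $x$; bounded convergence over the fixed law of $X_A$ and over the limiting law of $|A|$ then yields the factorisation $\E[\phi_n(Y_1)\phi_n(Y_2)]\to\big(\int\phi\,d\nu\big)^2$ without any quantitative mixing. The paper implements exactly this by a Skorokhod representation under which $|U_n\wedge V_n|$ is eventually constant, so that the common starting point of the two chains is eventually fixed and $(a)$, $(b)$ apply verbatim. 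Your closing paragraph gestures at this resolution, but as written the decisive step of your proof is missing rather than proved.

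Two further ingredients are asserted without justification. First, the input you need from the tree side in Step~3 is the \emph{joint} convergence of the two normalised depths to independent standard Gaussians, jointly with (and asymptotically independently of) the MRCA depth; only the marginal statements are classical (depth CLT, MRCA depth), and the joint statement is precisely what the paper proves, via the rotation correspondence to binary search trees and almost sure limits of subtree sizes. Second, when $\mathcal M_0(\PP)\neq 1$ the attachment tree is no longer a random recursive tree, and ``absorbing the $\mathcal M_0$-term into a weighted root'' is not a proof: the paper handles this case separately by splitting $\mathcal M_0$ into unit-mass pieces, growing a forest, and using the Dirichlet limit of the component sizes (Lemma~\ref{lem:Dirichlet}) to reduce to the unit-mass case. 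Neither point is fatal, but both are steps you would have to supply.
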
 

\begin{rem} 
In fact, in this theorem and in the rest of the article, as explained in Section~\ref{sec:generalM0}, 
the role played by the initial measure ${\cal M}_0$ is secondary.
\end{rem}

{Bandyopadhyay \& Thacker~\cite{BT2} in their Theorem~3.2, state a similar result but under more restrictive assumptions: in the Polish case for $a(n)=1$ and $b(n)=0$ and in $R^d$ for two special cases of renormalisation sequences $a(n)$ and $b(n)$.}
%state an apparently similar result (Theorem~3.2). 
%In fact, this Theorem~3.1 is not stated for the measures valued process ${\cal M}_n$ (not considered) 
%but for the law of the colour of the $n^{\text{th}}$ added ball, 
%in the $\R^d$ case and for some special cases of the renormalisation sequences $a(n)$ and $b(n)$. 
%Even in the discrete case, the convergence in probability stated in~\eref{eq:gco} 
%is stronger than the convergence in distribution of $n^{\text{th}}$ added ball, 
%since the latter (considered by Bandyopadhyay and Thacker~\cite{BT2}) 
%would correspond to the convergence of $\E\big[\Theta_{a({\log n}),\, b({\log n})}\l(n^{-1}{\MM_n}\r)\big]$ 
%instead of the renormalised composition measure itself.
Bandyopadhyay \& Thacker also give numerous examples (see~\cite[Section~4]{BT2}) to which our result also applies directly.

\subsection{Almost sure convergent MVPPs}\label{sub:as}
As already stated in Theorem~\ref{thm:Jan}, 
almost sure convergence of the rescaled urn composition is already known for $d$-colour P\'olya urns; 
see Athreya and Ney~\cite{AN} or Janson~\cite{Janson2004177}.

In this section, we state almost sure convergence in another case: ``the random walk case'',
which corresponds to the case where
the companion Markov chain is a random walk whose increments have exponential moments.

This random walk case is the case where $\RR_x$ is the law of $x+\Delta$ where $\Delta$ is a random variable (which does not depend on~$x$).
In this case, the underlying Markov chain $(W_n)_{n\geq 0}$ is the simple random walk of increment~$\Delta$.
We are able to prove strong convergence of the (scaled) MVPP when the increments~$\Delta\in\mathbb R^d$ 
have exponential moments in the neighbourhood of~$0$.
Assume that there exists~$r_1>0$, such that
\ben\label{eq:ine} \E[\exp(\theta \Delta)]<+\infty \textrm{ for any } \theta \in \mathcal B(0,r_1),\een
where $\mathcal B(0,r_1)$ is the closed ball centred at the origin and of radius $r_1$.
Note that, by continuity of the Laplace transform, if we denote 
\[S_x=  \sup_{\theta \in \mathcal B(0,x)} \l|\E[\exp(\theta\Delta)]-1\r|,\]
then we have
\ben
\label{eq:ine2} S_x \xrightarrow[x\to 0]{} 0, \textrm{ and }~~ S_{r_1} <+\infty.
\een
\begin{thm}\label{theo:BRW}
Assume that for any $x\in \R^d$, 
$\RR_x$ is the law of $x+\Delta$ where $\Delta$ is a random variable in $\mathbb R^d$ 
(which does not depend on~$x$). 
Assume that $\Delta$ has exponential moments in a neighbourhood of 0, 
and denote by~$m$ its mean and by~$\Sigma^2$ its covariance matrix.
Then, for any finite measure~$\mathcal M_0$ such that $0<\mathcal M_0(\PP)<+\infty$,
\ben\label{eq:cv_BRW}
\Theta_{\sqrt{\log n},\, m\log n}\l(n^{-1}{\mathcal M_n}\r) \as {\mathcal N(0,\,\Sigma^2+m^T m)}%\delta_{\mathcal N(0,\,\Sigma^2+m^T m)},
\een
where $m^T$ stands for the transpose of $m$.
\end{thm}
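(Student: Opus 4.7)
The plan is to exploit the branching random walk (BRW) encoding of MVPPs described in the introduction: iterating the evolution rule $(b')$, one realises $\MM_n$ (up to the contribution of $\MM_0$) as the counting measure on the positions of a BRW indexed by a random recursive tree $T_n$ on $n$ vertices, where each edge of $T_n$ carries an independent copy of $\Delta$. Under~\eqref{eq:ine} the natural object to track is the Laplace transform
\[
L_n(\theta):=\int_{\R^d}e^{\langle\theta,x\rangle}\,d\MM_n(x),\qquad \theta\in\mathcal{B}(0,r_1).
\]
Writing $c:=\MM_0(\PP)$ and $\widetilde M(\theta):=\E[e^{\langle\theta,\Delta\rangle}]$, the dynamics $\MM_{n+1}=\MM_n+\delta_{C_n+\Delta_{n+1}}$ combined with~\eqref{eq:norm} yield $\E\bigl[e^{\langle\theta,C_n+\Delta_{n+1}\rangle}\bigm|\mathcal{F}_n\bigr]=L_n(\theta)\widetilde M(\theta)/(n+c)$. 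Setting $Z_n(\theta):=\prod_{k=0}^{n-1}\bigl(1+\widetilde M(\theta)/(k+c)\bigr)$, the ratio $W_n(\theta):=L_n(\theta)/Z_n(\theta)$ is then a nonnegative $(\mathcal{F}_n)$-martingale; this martingale will be the workhorse of the proof.

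The next step is a parallel second-moment computation. The recursion
\[
\E[L_{n+1}(\theta)^2\mid\mathcal{F}_n]=L_n(\theta)^2\Bigl(1+\tfrac{2\widetilde M(\theta)}{n+c}\Bigr)+\tfrac{\widetilde M(2\theta)}{n+c}\,L_n(2\theta)
\]
couples $L_n(\theta)^2$ with $L_n(2\theta)$, and a Taylor expansion gives $2\widetilde M(\theta)-\widetilde M(2\theta)=1+O(|\theta|^2)>0$ on some ball $\mathcal{B}(0,r_2)$; solving the recursion there yields $\sup_n\E[W_n(\theta)^2]<\infty$, so that $W_n(\theta)\to W_\infty(\theta)$ almost surely and in $L^2$ for every $\theta\in\mathcal{B}(0,r_2)$. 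Since $L_n(0)=n+c$ and $Z_n(0)=(n+c)/c$, one has the deterministic identity $W_\infty(0)=c$. Combining the Gamma-function asymptotics $Z_n(\theta)\sim \Gamma(c)\,\Gamma(c+\widetilde M(\theta))^{-1}n^{\widetilde M(\theta)}$ with the expansion $\widetilde M(\theta_n)-1=\langle m,\theta\rangle/\sqrt{\log n}+\tfrac12\theta^T(\Sigma^2+m^T m)\theta/\log n+o(1/\log n)$ along $\theta_n:=\theta/\sqrt{\log n}$, one obtains
\[
e^{-\langle m,\theta\rangle\sqrt{\log n}}\,\frac{L_n(\theta_n)}{n}=\frac{W_n(\theta_n)}{c}\,\exp\!\Bigl(\tfrac{1}{2}\theta^T(\Sigma^2+m^T m)\theta+o(1)\Bigr).
\]
The left-hand side is exactly the moment generating function of $\Theta_{\sqrt{\log n},m\log n}(n^{-1}\MM_n)$ evaluated at $\theta$, so~\eqref{eq:cv_BRW} reduces to showing $W_n(\theta_n)\to W_\infty(0)=c$ almost surely, for every deterministic sequence $\theta_n\to 0$ in $\mathcal{B}(0,r_2)$.

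The hard part is precisely this last reduction, because the null set on which $W_n(\theta)$ fails to converge to $W_\infty(\theta)$ may depend on $\theta$, and pointwise a.s.\ convergence alone does not allow $\theta$ to depend on $n$. My plan to overcome this is to upgrade the pointwise $L^2$-convergence of $W_n$ to \emph{uniform} convergence on compact subsets of $\mathcal{B}(0,r_2)$. The key input is that $\theta\mapsto \log L_n(\theta)$ is convex as the logarithm of the Laplace transform of a nonnegative measure; on a compact $K\subset K'\subset\mathcal{B}(0,r_2)$ this provides a deterministic control of $\|\nabla W_n\|_{L^\infty(K)}$ by finitely many values of $W_n$ on $K'$. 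Combined with $L^2$-convergence on a countable dense subset of $K$ and a Borel--Cantelli step along a sparse deterministic subsequence such as $n_k=2^k$ (to control the fluctuations of $W_n$ between $n_k$ and $n_{k+1}$), this should produce uniform almost sure convergence $W_n\to W_\infty$ on $K$, hence $W_n(\theta_n)\to c$ a.s. A pathwise version of the Laplace-transform continuity theorem then converts the resulting a.s.\ convergence of the moment generating function on a neighbourhood of $0$ into the announced almost sure weak convergence to $\mathcal{N}(0,\Sigma^2+m^T m)$.
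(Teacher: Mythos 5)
Your skeleton is essentially the paper's own (Joffe--Le Cam--Neveu style, following Chauvin et al.): encode the MVPP by a branching random walk on the RRT, normalise a transform of the empirical measure by its mean to get a martingale, prove an $L^2$ bound for arguments near $0$, evaluate at $\theta/\sqrt{\log n}$ using Gamma-function asymptotics of $Z_n$, and finish with a pathwise continuity theorem; the paper does this with Fourier transforms on a complex neighbourhood of $0$, you with Laplace transforms (legitimate under the exponential-moment hypothesis, and arguably cleaner since the martingale is positive). Your martingale identity, the diagonal second-moment recursion, the condition $2\widetilde M(\theta)-\widetilde M(2\theta)>0$ near $0$, and the reduction of \eqref{eq:cv_BRW} to $W_n(\theta/\sqrt{\log n})\to c$ a.s.\ are all correct (modulo the harmless point that $\MM_n+\delta_{C_n+\Delta_{n+1}}$ describes the occupation measure of the BRW rather than $\MM_n$ itself, which adds the whole measure $\RR_{C_n}$; the two differ by convolution with the fixed law of $\Delta$, which disappears under the rescaling, cf.\ \eref{eq:sgegz}).

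The genuine gap is the step you yourself flag as the hard part, and the tool you propose does not close it. Convexity of $\theta\mapsto\log L_n(\theta)$ controls $\nabla\log L_n$ on $K$ only through the oscillation of $\log L_n$ on a larger compact, and that oscillation grows like $\log n$ (since $\log L_n(\theta)=\widetilde M(\theta)\log n+O(1)$); the same is true for $\nabla\log Z_n$, and their difference $\nabla\log W_n$ is exactly what you cannot bound this way. So the resulting modulus-of-continuity estimate for $W_n$ is $O(\log n)\,|\theta-\theta'|$, which at the relevant scale $|\theta_n|\asymp 1/\sqrt{\log n}$ gives an error $O(\sqrt{\log n})$, i.e.\ nothing; $L^2$-convergence on a countable dense set plus Borel--Cantelli along $n_k=2^k$ cannot substitute for the missing equicontinuity, because the fluctuations you must control between $n_k$ and $n_{k+1}$ involve different values of $\theta$. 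What is needed -- and what the paper supplies -- is an \emph{off-diagonal} second-moment estimate: a recursion for $\E[W_n(\theta)W_n(\theta')]$ (the analogue of the paper's $\ovs{P_n}(z_1,z_2)$), yielding a limit covariance that is smooth (holomorphic in the paper) and a bound of the type $\E\l|W_n(\theta)-W_n(\theta')\r|^2\leq c\,|\theta-\theta'|^2$ uniformly in $n$; from this one gets, via a Kolmogorov-criterion/chaining argument (or, equivalently, Biggins' analyticity-plus-Cauchy-formula route), almost sure \emph{uniform} convergence of $W_n$ on a fixed neighbourhood of $0$ together with continuity of the limit at $0$, and only then does $W_n(\theta/\sqrt{\log n})\to W_\infty(0)=c$ follow on a single full-measure event. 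Your proposal contains only the diagonal moment $\E[W_n(\theta)^2]$ and no workable mechanism for this spatial regularity, so as written the almost sure statement (as opposed to convergence in probability, already covered by Theorem~\ref{th:main}) is not proved.
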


The convergence in probability in this case is a direct consequence of~Theorem~\ref{th:main} 
and has also already been proved by 
Bandyopadhyay \& Thacker~\cite[Theorem 2]{BT1}, 
together with some speed-of-convergence results.
However, the almost sure convergence in Theorem~\ref{theo:BRW} is a new result.
 
The proof of almost sure convergence in this setting
is obtained by proving (by a martingale method) 
that the occupation measure of a branching random walk built on a random recursive tree converges, after normalisation, almost surely. 
A similar result was obtain by Biggins~\cite{biggins1992} for branching random walks on Galton--Watson trees;
both Biggins' result and ours need, for the same reason, the same somewhat-restrictive moment assumption.
The proof we give is very much inspired by that of Chauvin \& al.~\cite{CDJH} (following Joffe, Le Cam \& Neveu~\cite{JLCN}'s method) 
where where they prove the convergence of the profile of binary search trees.

\vspace{\baselineskip}
As a corollary of Theorem~\ref{theo:BRW} 
we obtain a strong convergence result for the profile of the random recursive tree.
The random recursive tree, or rather the sequence of random recursive trees, 
will be defined more formally later in this paper (see Section~\ref{sec:RRDT}). 
It is built as follows: $\RRT_0$ has a unique node being its root;
to build $\RRT_{n+1}$ from $\RRT_n$, 
we pick a node uniformly at random in $\RRT_n$ and add a children to this node.
For any node~$u$, we denote by~$|u|$ the graph distance between~$u$ and the root.
The profile of the random recursive tree~$\RRT_n$ is the measure
\[{\sf Prof}_n := \frac1n \sum_{k} X_{n,k}\delta_{k},\]
where  $X_{n,k}$ is the number of nodes at distance $k$ of the root in $\RRT_n$.
The profile of a tree gives valuable information about its shape and has been studied for various random trees: see for example Drmota \& Gittenberger~\cite{DG97} for the Catalan tree; Chauvin \& al.~\cite{CDJH} for the binary search tree; Schopp~\cite{Schopp} for $m$-ary search trees; Katona~\cite{Katona} and Sulzbach~\cite{Henning} for preferential attachment trees;  Drmota, Janson \& Neininger~\cite{DJN08} for random search trees; and Drmota \& Hwang~\cite{DH05}, Fuchs, Hwang \& Neininger~\cite{Fuchs2006} for the random recursive tree.
In the latter papers the authors prove that if $\nicefrac{k}{\log n}$ converges to $\alpha\geq 0$ then $\frac{X_{n,k}}{\mathbb E X_{n,k}}$ converges in distribution to some limit law $X(\alpha)$. They prove that convergence holds for all moments only if $\alpha\in[0,1]$ and also that if $\alpha =1$ and $|k-\log n|\to\infty$ then $(X_{n,k}-\mathbb EX_{n,k})/(\Var X_{n,k})^{\nicefrac12}$ converges in distribution to a random variable.
As a corollary of Theorem~\ref{theo:BRW}, we are able to give an additional result about the profile of the random recursive tree:
Taking $\Delta = 1$ (the random walk with increment equal to 1 a.s.) and $\mathcal M_0 = \delta_0$ in Theorem~\ref{theo:BRW}, 
we get that  
\ben
{\sf Prof}_n=n^{-1}\mathcal M_n.
\een
As a consequence, we have
\begin{cor} \label{cor:ehhte} The sequence of rescaled profiles converge a.s.:
\ben
\Theta_{\sqrt{\log n},\,\log n}\l({\sf Prof}_n\r)\as {\mathcal N(0,1)}%\delta_{\mathcal N(0,1)}.
\een
\end{cor}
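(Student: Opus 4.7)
\textbf{Proof plan for Corollary~\ref{cor:ehhte}.}
The plan is to apply Theorem~\ref{theo:BRW} to the MVPP obtained with initial measure $\mathcal{M}_0 = \delta_0$ and replacement kernel $\RR_x = \delta_{x+1}$ (the ``random walk'' case with deterministic increment $\Delta \equiv 1$ in $\R$), and to show that, under this specialisation, the renormalised composition measure $n^{-1}\mathcal{M}_n$ coincides pathwise with the profile measure $\mathsf{Prof}_n$. The corollary then follows because $\Delta \equiv 1$ yields $m = 1$, $\Sigma^2 = 0$ and hence $\Sigma^2 + m^T m = 1$, so Theorem~\ref{theo:BRW} gives the almost sure convergence of $\Theta_{\sqrt{\log n},\,\log n}(n^{-1}\mathcal{M}_n)$ to $\mathcal{N}(0,1)$.

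The only non-trivial step is the pathwise identification $n^{-1}\mathcal{M}_n = \mathsf{Prof}_n$. For this, I would use the branching/random-recursive-tree representation of the MVPP described just after equation~\eref{eq:dec}. Since $\mathcal{M}_0(\PP)=1$, at step $n+1$ one either (with probability $1/(n+1)$) starts a new ``child of the root''  sampled from $\mathsf{Nor}(\mathcal M_0) = \delta_0$, or (with probability $n/(n+1)$) picks an index $U_{n+1}$ uniformly in $\{1,\dots,n\}$ and samples from $\RR_{C_{U_{n+1}}}$; this is exactly the dynamics of $\mathsf{RRT}_n$, where the fresh node is attached to the parent indexed by $U_{n+1}$ (or to the root). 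Now observe by induction on $n$ that if we label each node $u \in \mathsf{RRT}_n$ by the drawn colour $C_u$ that created it (with $C_{\text{root}} = 0$), then $C_u = |u|$: indeed the root has colour $0 = |\text{root}|$, and the child of a node $v$ at depth $k$ is sampled from $\RR_{C_v} = \delta_{k+1}$, hence has colour $k+1$, matching its depth. Summing over nodes and using~\eref{eq:dec} gives
\[
\mathcal{M}_n \;=\; \delta_0 + \sum_{i=1}^n \delta_{C_i} \;=\; \sum_{u \in \mathsf{RRT}_n} \delta_{|u|} \;=\; \sum_k X_{n,k}\,\delta_k,
\]
so $n^{-1}\mathcal{M}_n = \mathsf{Prof}_n$ deterministically.

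Once this identification is established, applying Theorem~\ref{theo:BRW} (whose hypotheses are trivially verified, the constant increment $\Delta \equiv 1$ having exponential moments everywhere) gives the desired almost sure convergence. The only potentially delicate point is checking that the coupling between the MVPP dynamics and the random-recursive-tree dynamics, as described in items $(a)$ and $(b')$ of the introduction, produces \emph{exactly} the random recursive tree distribution; but this is immediate from the uniform choice of $U_{n+1}$ on $\{1,\dots,n\}$ combined with the extra $1/(n+1)$-probability attachment to the root, which together put uniform probability $1/(n+1)$ on each of the $n+1$ existing nodes of $\mathsf{RRT}_n$. No substantive obstacle remains.
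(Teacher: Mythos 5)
Your proposal is correct and follows essentially the same route as the paper, which likewise obtains the corollary by taking $\Delta=1$ and $\mathcal M_0=\delta_0$ in Theorem~\ref{theo:BRW} (so $m=1$, $\Sigma^2=0$, hence variance $\Sigma^2+m^Tm=1$) after observing that ${\sf Prof}_n=n^{-1}\MM_n$ through the random-recursive-tree coupling. One bookkeeping remark: with your labelling the colour \emph{drawn} at the step creating a node $u$ is actually $|u|-1$ (a child of the root is sampled from ${\sf Nor}(\delta_0)=\delta_0$, not from $\RR_0$), while \eref{eq:dec} adds $\RR_{C_i}=\delta_{C_i+1}$ rather than $\delta_{C_i}$; these two off-by-one shifts cancel, so your final identity $\MM_n=\sum_{u\in\RRT_n}\delta_{|u|}$, and hence the conclusion, stands exactly as claimed.
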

Equivalently, let $\overline{{\sf Prof}_n}(x)$ be the proportion of nodes in $\RRT_n$ at distance at most $\log n + x\sqrt{\log n}$ of the root. We have $\overline{{\sf Prof}_n}\as \Phi$ in $D(\mathbb R)$ (the space of c\`ad-l\`ag functions equipped with the Skorokhod's topology) where $\Phi$ is the distribution function of the standard Gaussian distribution.

Note that, although this result is new for the random recursive tree,
a stronger, local result is known for the binary search tree (see~\cite[Theorem~1]{CDJH}) and for preferential attachment trees (see~\cite{Katona}).

\subsection{Drawing without replacement}\label{sub:without}
In the $d$-colour case, it is natural to consider the case of ``drawing without replacement''.
This model is equivalent to allowing the diagonal coefficients of the replacement matrix to be equal to~$-1$. 

To allow drawing without replacement in a MVPP, 
we need to consider again atomic measures since when a measure has no atom, 
the contribution of the weight of the drawing ball to the total mass is zero, 
and then removing it or not does not change anything. 
We decline a variation of our model: the $\kappa$-discrete measure-valued \Pol processes. 

In the $\kappa$-discrete model, for all $x\in\PP$, 
the mass $\mathcal M_n(\{x\})$ of any $x\in \PP$ is a multiple $\kappa$ for some integer $\kappa\geq 2$.
Removing a ball with colour~$x$ corresponds to subtracting $(1/\kappa)\delta_x$ from $\mathcal M_n$, 
which means that $1/\kappa$ corresponds to the weight of a ball. 
In order for the composition measures $\mathcal M_n$ to stay non-negative,
we need to assume that
%removing a ball is possible only when 
the initial urn composition~$\mathcal M_0$ 
and the replacement measures (i.e.\ the $(\mathcal R_x, x\in \PP)$) 
are sums of weighted Dirac measures, 
each weight being a multiple of $1/\kappa$. 
This setting corresponds to the generalisation of \Pol urn process ``without replacement'' 
to the measure-valued case.

\bigskip
{\bf Definition of $\kappa$-discrete MVPPs -- }
A $(\mathcal M_0, (\wt{\mathcal R}_x, x\in \PP))$-MVPP is said to be $\kappa$-discrete
if the finite non-negative measure $\mathcal M_0$ can be written under the form 
$\mathcal M_0= (1/\kappa)\sum_{y \in \PP} w_y \delta_y$ where the weights $w_y$'s are non-negative integers, 
all of them being~0 but a finite number, and if for any $x\in \PP$,
\ben\label{eq:wtR}
\wt{\mathcal R}_x = -\frac1\kappa\,\delta_x + \mathcal R_x\een
where
\ben\label{eq:ma}
\mathcal R_x= \frac1\kappa\, \sum_{y \in \PP} r_{x,y}\,\delta_y
\een 
where the $r_{x,y}$'s are non-negative integers all of them being 0 but a finite number. 
In other words, the sequence of integers $(r_{x,y})_{x,y\in\PP}$ is the equivalent for $\kappa$-discrete MVPPs 
of the replacement matrix.
We still assume that $\RR_x(\PP)=1$ for all $x \in \PP$, that is
\ben\label{eq:nb}
\sum_{y\in\PP}r_{x,y}=\kappa.\een

\begin{thm} \label{th:main-2}
Assume that $(\mathcal M_0, (\wt{\mathcal R}_x, x\in \PP))$ is a $\kappa$-discrete MVPP, for some $\kappa\in\{2,3,\ldots\}$. Assume moreover that hypotheses~$(a)$ and~$(b)$ of Theorem~\ref{th:main} hold for the Markov chain with kernel 
$K(z,A)=\mathcal R_z(A)$.
Under these hypotheses, for any finite measure~$\mathcal M_0$ such that $0<\mathcal M_0(\PP)<+\infty$,
we have 
\ben 
\Theta_{a(\beta \log n),\,b(\beta\log n)}\l(n^{-1}{\MM_n}\r) \proba  {\nu}%\delta_\nu 
\een
for the topology of weak convergence on ${\mathcal M}(\PP)$, 
where 
\ben
\beta=1+\frac1{\kappa-1},
\een
and %the limit is the (deterministic) Dirac mass at $\nu$, 
{where $\nu$ is}
the distribution of $\Gamma g(\Lambda)+f(\Lambda)$ where $\Lambda$ is a standard Gaussian random variable, and 
$\Gamma\sim \gamma$ independent of~$\Lambda$.
\end{thm}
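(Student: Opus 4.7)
My plan is to mirror the proof of Theorem~\ref{th:main}, replacing the random recursive tree encoding by an increasing random $\kappa$-ary tree. View each atom of mass $1/\kappa$ in a $\kappa$-discrete urn as an elementary \emph{particle} carrying a colour: a draw proportional to $\mathcal M_n$ is then exactly a uniform draw among the multiset of particles alive at time~$n$, and one step removes the drawn particle (of colour~$x$, say) and replaces it by $\kappa$ new particles consisting of $r_{x,y}$ particles of colour~$y$ for each $y\in\PP$, so that $\mathcal M_{n+1}=\mathcal M_n+\wt{\mathcal R}_x$. Hence $\mathcal M_n=(1/\kappa)\sum_{\ell\in L_n}\delta_{c(\ell)}$, where $L_n$ is the set of alive particles at time $n$, of deterministic cardinality $\kappa\mathcal M_0(\PP)+n(\kappa-1)$, and $c(\ell)$ is the colour of~$\ell$. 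Recording the genealogy as an increasing random $\kappa$-ary tree $T_n$ (initial particles are leaves; each activation turns the drawn leaf into an internal node with $\kappa$ new leaf-children), the colours along any root-to-leaf path form a realisation of the Markov chain $(W_k)_{k\geq 0}$ of kernel $K(x,\cdot)=\mathcal R_x$ appearing in Hypothesis~(a). In particular, if $D_n$ denotes the depth of a uniformly chosen leaf of $T_n$ and $W_0\sim{\sf Nor}(\mathcal M_0)$, the mean measure $\mathbb E[{\sf Nor}(\mathcal M_n)]$ coincides with the law of $W_{D_n}$.

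The key analytic input is a depth CLT for the tree $T_n$:
\[\frac{D_n-\beta\log n}{\sqrt{\beta\log n}}\xrightarrow[n\to\infty]{(d)}\mathcal N(0,1),\qquad \beta=1+\frac{1}{\kappa-1}.\]
A short computation with the probability generating function $\phi_n(z):=\mathbb E\bigl[\sum_k (X_{n,k}/|L_n|)\,z^k\bigr]$ yields the closed-form recursion $\phi_{n+1}(z)=\phi_n(z)\,(|L_n|-1+\kappa z)/|L_{n+1}|$, from which $\phi_n(z)\sim n^{\beta(z-1)}$ and therefore $\mathbb E D_n\sim\beta\log n$, $\mathrm{Var}\,D_n\sim\beta\log n$, and the announced CLT follow at once. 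The factor $\beta=\kappa/(\kappa-1)$ naturally arises because, at a step where $T_n$ has $L$ leaves, a given leaf is activated with probability $1/L$ while $L$ grows by $\kappa-1$ per step, producing a logarithmic clock of speed~$\beta$ rather than~$1$ (compared with the random recursive tree underlying Theorem~\ref{th:main}).

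To conclude, combine the depth CLT with Hypotheses~(a) and~(b) applied at the random time~$D_n$: writing $D_n=\beta\log n+\sqrt{\beta\log n}\,\Lambda_n+\varepsilon_n$ with $\Lambda_n\xrightarrow[n\to\infty]{(d)}\mathcal N(0,1)$ and $\varepsilon_n=o_{\mathbb P}(\sqrt{\log n})$, the regularity conditions~\eqref{eq:b(n)}--\eqref{eq:a(n)} deliver the weak convergence of $\Theta_{a(\beta\log n),b(\beta\log n)}\bigl(\mathrm{law}(W_{D_n})\bigr)$ to~$\nu$, exactly as in the proof of Theorem~\ref{th:main}; together with the deterministic convergence $\mathcal M_n(\PP)/n\to(\kappa-1)/\kappa$, this controls $\mathbb E[\Theta_{a(\beta\log n),b(\beta\log n)}(n^{-1}\mathcal M_n)]$. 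The main obstacle is then the passage from this annealed statement to the quenched convergence in probability of the random measure~$n^{-1}\mathcal M_n$ itself: one must show that the empirical leaf-colour distribution concentrates around its mean. This should follow from a second-moment bound exploiting the branching Markov chain structure, along the same martingale lines as for Theorem~\ref{th:main}, the only extra bookkeeping being the $\kappa$-ary branching of~$T_n$ in place of the unary increments of a random recursive tree.
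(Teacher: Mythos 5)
Your overall route is the same as the paper's: encode the $\kappa$-discrete urn by an increasing $\kappa$-ary tree whose leaves carry the current colours, observe that the colour of a uniform leaf is the companion chain run for the (random) depth of that leaf, and use the depth CLT with the clock $\beta\log n$, $\beta=\kappa/(\kappa-1)$. Your generating-function recursion $\phi_{n+1}(z)=\phi_n(z)\,(|L_n|-1+\kappa z)/|L_{n+1}|$ is correct and gives a self-contained derivation of the one-leaf depth CLT (the paper instead quotes Bergeron \& al.\ for \eqref{eq:qgge}), so up to that point the proposal is sound. One small inaccuracy: the colours along a \emph{fixed} root-to-leaf path are not a realisation of the chain of kernel $\mathcal R$ --- given the parent's colour the children's colours are the deterministic atoms $y_1(x),\dots,y_\kappa(x)$. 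The Markov property only holds after symmetrising the children's labels (the paper's kernel ${\sf RM}_x$) or, equivalently, after invoking the exchangeability of the $\kappa$ subtrees; this is harmless for one uniform leaf but has to be set up correctly because of what follows.

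The genuine gap is exactly the step you defer at the end. The passage from the annealed statement (law of the colour of one uniform leaf) to convergence in probability of the random measure $n^{-1}\mathcal M_n$ is not ``extra bookkeeping'': it is the heart of the argument, and the mechanism in the paper is Lemma~\ref{lem:indep}, i.e.\ one must draw \emph{two} independent colours under $n^{-1}\mathcal M_n$ and show the pair converges to two independent copies of $\nu$. For this one needs the joint statement for two uniform leaves $U_n,V_n$ of $\mathtt T_n^{\kappa}$: their rescaled depths converge to independent Gaussians \emph{jointly} with the fact that $|U_n\wedge V_n|$ converges to an a.s.\ finite random variable, so that beyond a finite level the two colour chains evolve as independent copies of the kernel-$\mathcal R$ chain and hypotheses $(a)$--$(b)$ can be applied to each. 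Your one-leaf PGF gives no control of this two-point structure; the paper obtains it through the Dirichlet-enriched representation of the $\kappa$-ary recursive tree (Lemma~\ref{lem:enriched}, adapting the proof of Proposition~\ref{prop:inc-tree-BST}), which shows the subtree proportions converge a.s.\ to positive limits and hence that the last common ancestor stays at bounded depth. Also note that your pointer to ``the same martingale lines as for Theorem~\ref{th:main}'' is misplaced: the proof of Theorem~\ref{th:main} is not martingale-based (that is Theorem~\ref{theo:BRW}); it is precisely the two-draw second-moment device just described, and that is the ingredient your proposal still lacks. Finally, the case of a general initial mass $\mathcal M_0(\PP)\neq 1$ needs the forest argument of Section~\ref{sec:generalM0}, which you do not address, though this is secondary.
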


\begin{rem} \label{rem:cond}
More general models of drawing without replacement can be defined since a weaker tenable condition can be defined: 
what is needed is that for each colour $x$, $\widetilde{\RR}_x(\{x\})$ is a divisor of $\widetilde{\RR}_y(\{x\})$ for all $y\in\PP$, 
when $\widetilde{\RR}_x(\{x\})<0$, 
and there are no condition when $\widetilde{\RR}_x(\{x\})=0$ on $\widetilde{\RR}_y(\{x\})$.
We do not go further in this direction.
\end{rem}

\subsection{Examples and open problems}
\subsubsection{Examples of convergent MVPPs}
In Section~\ref{sub:as}, we discussed two particular examples for which one has strong convergence of the renormalised composition random measure: 
the $d$-colour case and the branching random walk case.
In most other cases, we are unable to prove strong convergence but can still apply Theorem~\ref{th:main} to get convergence in probability;
we now give examples of such cases.

\bigskip
{\bf Homogeneous heavy-tailed random walks -- }
Let $\Delta$ be a random variable on~$\PP$ and let $\mathcal M_n$ 
be the MVPP defined by the replacement measures $\mathcal R_x$ being the law of $x + \Delta$ 
for all $x\in\PP$.
We have already treated the case when $\Delta$ has finite mean and finite variance (see Theorem~\ref{theo:BRW}), but other cases also fall in our framework: the asymptotic behaviour of a random walks is a well-studied topic, in $\mathbb{R}$ but also on much more general Polish spaces ($\mathbb{R}^d$, groups, Cayley graphs, etc.). 
If such a random walk converges (after rescaling) to a limit distribution, 
then it falls in our setting. 

\medskip
{\it The stable case --}
If $\Delta$ is a real random variable having a finite mean $m$ and such that, when $u$ tends to infinity, {$\mathbb P(\Delta\geq u) \sim u^{-\alpha}\ell(u)$ with $\alpha < 2$ and where $\ell$ is regularly varying at infinity}. 
Then, the underlying Markov chain  $(W_n)_{n\geq 0}$ is $(n^{\nicefrac1\alpha}, b(n))$ ergodic with $b(n) = 0$ if $\alpha<1$ and $mn$ otherwise, and its limit law $\gamma$ is $\alpha$-stable. 
In both cases ($\alpha<1$ and $1\leq \alpha <2$), we have $f(x) = 0$, and $g(x) = 1$ and thus, in view of Theorem~\ref{th:main},
\ben
\Theta_{\log^{\nicefrac1\alpha} n,\,0}\l(n^{-1}{\MM_n}\r) \to {\gamma}%\delta_{\gamma},
\een
in probability when $n$ tends to infinity.

The proof of Theorem~\ref{th:main} relies on the analysis in this case of a branching random walk built on the random recursive trees; 
this result appears to be very similar to that of Fekete~\cite{fekete} where the underlying tree is the binary search tree (Remark \ref{rem:bstvsmvpp} below explains why branching random walks indexed by binary search trees and random recursive trees are very similar objects).

\bigskip
{\bf Your favourite ergodic Markov chain -- }
The philosophy behind Theorem~\ref{th:main} is that
any measure-valued \Pol process is associated to an ergodic Markov chain.
Thus, providing examples of MVPPs to which our result applies is equivalent to providing examples of ergodic Markov chains. 
One may then illustrate our Theorem \ref{th:main} by choosing in the literature a nice Markov chain that converges in distribution, 
for example: the $M/M/\infty$ queue. 
One among many examples is the $M/M/\infty$ queue defined for two positive parameters~$\lambda$ and~$\mu$. 
The Markov chain takes values in $\mathbb N$ and the transition probability are given by
\[p_{x,x+1} = \frac{\lambda}{\lambda+x\mu} \quad\text{ and }\quad p_{x,x-1}= \frac{x\mu}{\lambda+x\mu},\]
for all $x\geq 1$, and $p_{0,1} = 1$.
It is well known that this Markov chain is ergodic and that its stationary distribution is given by
\[\gamma(x) = \Big(\frac{\lambda}{\mu}\Big)^{\!x} \frac{e^{-\nicefrac{\lambda}{\mu}}}{x!} \quad (\forall x\geq 0).\]
Thus, the MVPP $(\mathcal M_n)_{n\geq 0}$ on $\mathbb N$ of replacement measures
\[\mathcal R_x = \frac{\lambda}{x\mu+\lambda}\,\delta_{x+1} + \frac{x\mu}{x\mu+\lambda}\,\delta_{x-1} \quad(\forall x\geq 1),\]
and $\mathcal R_0 = \delta_1$ converges in probability to {$\gamma$}.%$\delta_{\gamma}$.

\vspace{\baselineskip}
We now want to discuss two extensions we can foresee to this work, but that we have so far not thoroughly investigated.

\subsubsection{Open problem: Random replacement matrices}

In this article, we consider deterministic replacement measures. 
In view of the finite-case literature (see Janson~\cite{Janson2004177}), 
it would be natural to investigate random replacement measures $\RR_x$. 
This model is defined using a family $(\nu_x, x\in \PP)$,
where $\nu_x$ is a probability measure on the set of probability measures on ${\PP}$;  
when the colour~$x$ is drawn for the $k^{\text{th}}$ time, 
we add the measure $\RR_x^{\sss (k)}$ in the urn, 
where $(\RR_x^{\sss (j)}, 1\leq j \leq k)$ are i.d.d.\ taken under $\nu_x$.
We might expect that, for some reasonable assumptions on the deviations of $\RR_x$ around its mean, 
some analogous of Theorem~\ref{th:main} should hold; 
However, we did not investigate this further.

\subsubsection{Open problem: Starting with infinitely many balls}

In the case of a $d$-colour P\'olya's urn (under the assumptions described in the introduction), the total number of balls in the urn is at all times finite, but goes to infinity.
As a mean to understand the ``stationary'' behaviour of the P\'olya's urn at infinity, 
it is natural to
%When generalizing the model of P\'olya's urns, 
%it is natural to 
try and define a \Pol urn process 
with an infinite number of balls in the urn (or an infinite mass) at all times. 
%since, after all, this is what happens in the limit of a standard urn process.

It is not possible to define a discrete-time \Pol urn process in this setting 
since choosing a ball uniformly is not possible 
(the measure ${\sf Nor}({\cal M}_0)$ would not be defined). 
However, passing to the continuous-time setting and assuming that at time~$0$ 
the urn contains an infinite number of of balls indexed by the positive integers 
is a way to properly define this process.
%we may, in certain cases, start to describe the resulting phenomenon. 

Denote by $X_i$ the colour of the $i^{\text{th}}$ ball in the urn at time~$0$ and 
assume that for all colour $c\in\mathbb N$,
\ben\label{eq:rght}
\rho(c):=\lim_{n\to\infty} \frac{\sum_{i=1}^{n} \indi_{X_i = c}}{n}\een
exists, or, more generally (without assuming the countability of the colour space), 
assume that 
\ben\label{eq:rght2}
\rho:=\frac{\sum_{i=1}^{n} \delta_{X_i}}{n}
\een
exists in the space ${\cal M}(\PP)$.

Then equip each of the balls with a clock that rings after an exponentially-distributed random time of parameter~1.
When a clock rings, the associated ball is drawn from the urn and the replacement rule applies.
We assume again that $\RR_y(\PP)=1$ for any $y$ (balance hypothesis). 
The newly added balls/measures are added at the same position as the triggering ball. 
Denote by $\rho(t)$ the limit distribution of ball colours at time $t$, 
limit taken in the sense of~\eref{eq:rght} or~\eref{eq:rght2}.
We may expect that $\rho(t)$ exists
(since it is the sum of the limit measures associated with each lattice point, 
normalised by their total weights),
is deterministic (conditionally on $\rho$), 
and that, for any~$t$
\[\rho(t) \dd \gamma,\]
in the set of probability measures over $\PP$, 
for $\gamma$ defined in Theorem \ref{th:main};
however, we did not investigate this further.

\subsection{Plan of the paper}
In Section~\ref{sec:BMC} we introduce the notion of branching Markov chains (BMC) 
and show how one can couple the measure-valued \Pol process with a branching Markov chain 
on the random recursive tree; 
this section also contains the definition of the random recursive tree and the binary search trees and the statements and proofs of several results about those trees that are then useful when proving the main result.

Section~\ref{sec:main} contains the proof of Theorem~\ref{th:main}. Section~\ref{sec:theo:BRW} contains the proof of Theorem~\ref{theo:BRW} and finally, Section~\ref{sec:without_replacement} treats the without-replacement case and contains the proof of Theorem~\ref{th:main-2}.

\section{Branching Markov chains}\label{sec:BMC}
In this section, we show how to couple the measure-valued \Pol process (MVPP) 
with a branching Markov chain (BMC) on the random recursive tree, 
or equivalently on the binary search tree. 
We also state here some preliminary results about BMCs 
which will be useful when proving our main results.

\subsection{Random recursive tree and binary search tree}
\label{sec:RRDT}

First, consider ${\bf X}=\{\varnothing \}\cup \bigcup_{n\geq 1} \mathbb{N}^n$ and ${\bf X}_2=\{\varnothing \}\cup \bigcup_{n\geq 1} \{0,1\}^n$ the set of finite words on, respectively, the alphabet $\mathbb{N}=\{0,1,2,\ldots\}$ and $\{0,1\}$, where $\varnothing$ is the empty sequence.
We denote by $uw$ the concatenation of the words $u$ and $w$, so that for some letters $a_1,\dots,a_h\in \N$, $a_1\cdots a_h$ is a word with $h$ letters.

\begin{itemize}
\item A {\bf planar tree} $T$ is defined as a subset of ${\bf X}$, containing $\varnothing$ (the root), and which satisfies the two following properties:
\begin{itemize}
\item[--] if $a_1\cdots a_k\in T$ for some $k\geq 1$ then $a_1\cdots a_{k-1}\in T$,
\item[--] if $a_1\cdots a_k\in T$, for any $0\leq j \leq a_{k}$, $a_1\cdots a_{k-1}j\in T$.
\end{itemize}
\end{itemize}
The elements $w$ of $T$ are called nodes, 
and the number of letters in $w$ is denoted $|w|$ -- 
it corresponds to the depth of the node $w$ in the tree. 
Any word~$v$ prefix of~$w$ is called an ancestor of $w$ 
(we write  $v\preceq w$ or $v\prec w$ for the strict property);
 by definition, if $w$ is a node of $T$, then all its ancestors are also in~$T$. 
The siblings of $w=a_1\cdots a_k$ are the elements of the form
$a_1 \cdots a_{k-1}j \in \mathbb{N}^k\cap T$. 
The second condition ensures that the names of the children of any node $w$ are the words $w0$, $w1$, $\ldots, wc$ where $c+1$ is the number of children of $w$. A node in $T$ with no child is called a leaf.

Finally the lexicographical order on ${\bf X}$ induces a total order on every tree.
\begin{itemize}
\item A {\bf complete binary tree} is a planar tree whose nodes belongs to ${\bf X}_2$ 
(in other words, all nodes have 0 or 2 children). 
Nodes with two children are called internal nodes, the other ones are the leaves. 
\item An {\bf incomplete binary tree} is the set of internal nodes of a complete binary trees 
(and it is then not a planar tree, in general, since a node $u$ may have only one child $u1$ without $u0$ being a node of the tree).
In any case, $w0$ is called the left child of $w$, and $w1$, the right one. 
\end{itemize}
Denote by $\Trees_n$, $\IBT_n$ and $\CBT_{2n+1}$ the set of planar trees with~$n$ nodes, 
the set of incomplete binary trees with~$n$ nodes, 
and the set of complete binary trees nodes with~$2n+1$ nodes.

A bijection $g$ between $\CBT_{2n+1}$ and $\IBT_n$ can be 
described as follows:
\begin{itemize}
\item from $T \in \CBT_{2n+1}$, take simply $g(T)$ as the set of internal nodes of $T$,
\item now conversely, take $t$ in $\IBT_n$ and construct ${\sf Complete}(t)=g^{-1}(t)\in \CBT_{2n+1}$ as 
\ben\label{eq:complete}
{\sf Complete}(t)=\{ u0, u \in t\}\cup \{u1,u\in t\}.
\een 
In words, add two children to the leaves of~$t$, and if a node~$u$ has only one child, add the second one.
\end{itemize}

{\bf A rooted recursive tree} with $n+1$ nodes (for some $n\geq 0)$ is a pair $(T,\ell)$ 
where $T\in \Trees_{n+1}$, and $\ell:T\to \{0, \ldots, n\}$ is a bijective labelling of the nodes of $T$, 
such that $\ell$ is increasing on $T$ for the lexicographical order on $T$. 
In other words, $\ell$ increases along the branches starting at the root, 
and along the siblings of each node.

Denote by $\Rec_{n+1}$ the set of rooted recursive trees with $n+1$ nodes.

\bigskip
{\bf The random recursive tree} $(\RRT_n, n\geq 0)$ is a Markov chain described as follows:
\begin{itemize}
\item $\RRT_0=(T_0,L_0)$, where $T_0$ is the tree reduced to its root $\varnothing$, with label $L_0(\varnothing)=0$;
\item assume that $\RRT_n=(T_n,L_n)$ has been built, choose a node $u$ uniformly at random among the~$n$ nodes of~$T_n$. 
Let $T_{n+1} = T_n\cup \{uc\}$, where~$c$ is the smallest integer such that~$uc\not\in~T_n$; 
the labelling $L_{n+1}$ of $T_{n+1}$ coincides with $L_n$ on $T_n$, and we set $L_{n+1}(uc)=n+1$.
\end{itemize}

\bigskip
{\bf The binary search tree} (BST) is a data structure used in computer science to store 
and retrieved data efficiently. It has been deeply studied by many authors. 
The BST associated to a sequence $(x_i, 1\leq i \leq n)$ of distinct elements 
of a totally ordered set (the order being denoted $<$) is a labelled incomplete binary tree $(t,\ell')$ defined recursively as follows. 
At time 1, the tree $t_0$ is reduced to the root~$\varnothing$  (i.e.\ $t_0=\{\varnothing\}$),
which is labelled $\ell'(\varnothing)=x_1$.

To insert a value $x$ in a tree $t$, do the following:
\begin{itemize}
\item if the tree $t$ is empty, create a node, and assign to this node the label $x$. 
\item if the tree is not empty, compare $x$ with the label  $\ell'(r)$ of the root $r$ of $t$. 
If $x>\ell'(r)$ then insert $x$ in the subtree of $t$ rooted at $r0$ else in the subtree of $t$ rooted at $r1$ where $r0$ and $r1$ are the left and right children of $r$.
\end{itemize}

Eventually, the binary search tree associated with $x_1, \ldots, x_n$ is the labelled incomplete binary tree $(t_n,\ell'_n)$ 
with $n$ nodes labelled by $x_1, \ldots, x_n$ obtained by the successive insertions of $x_1, \ldots, x_n$.

\medskip
The random binary search trees under the {\bf permutation model} 
is the pair $(T_n,L'_n)$ associated to the sequence of data $(U_1, U_2, \ldots)$ where the $U_i$ are i.i.d.\ uniformly distributed in $[0,1]$. 
Under this distribution, for all integers~$k$, the sequence $(U_1, \ldots, U_k)$ is exchangeable, 
and thus the (random) permutation $\sigma$ verifying $U_{\sigma(1)}<\cdots<U_{\sigma(k)}$, 
is uniformly distributed on the set of permutations of $\{1, \ldots, k\}$. 
Using an infinite sequence $(U_i, i\geq 1)$ allows one to build 
a sequence of binary trees $((T_n,L'_n), n\geq 1)$. 

The pair $(T_n, L'_n)$ is denoted by $\ov{\BST}_n$ and called the {\bf enriched random binary search tree}.
The first marginal $T_n$ is denoted by $\BST_n$ and called the random binary search tree.
On many occasions, working with  $\ov{\BST}_n$ is a convenient tool to prove results about  $\BST_n$ 
(as for example Lemma \ref{lem:rep}).
We state here a well known fact:
\begin{lem} 
Under the permutation model, $(\BST_n,n\geq 0)$ is the Markov chain defined as follows: $\BST_0=\{\varnothing\}$; 
and for all $n\geq 0$, to build $\BST_{n+1}$ from $\BST_n$, choose a node $u$ uniformly among the leaves of ${\sf Complete}(\BST_n)$, 
and set $\BST_{n+1}=\BST_n\cup\{u\}$.
\end{lem}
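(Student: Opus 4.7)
The plan is to prove the lemma by working with the enriched tree $\ov{\BST}_{n+1}$ and exploiting the classical independence, for i.i.d.\ continuous random variables, between the rank permutation and the order statistics. First I would recall the standard combinatorial fact that ${\sf Complete}(\BST_n)$ has exactly $n+1$ leaves, so that the two descriptions refer to the same set of possible moves. Next, I would set up a bijection $\phi$ between these $n+1$ leaves and the $n+1$ open intervals
\[I_1=(0,U_{(1)}),\quad I_2=(U_{(1)},U_{(2)}),\quad\ldots,\quad I_{n+1}=(U_{(n)},1)\]
determined by the order statistics of $U_1,\ldots,U_n$ on $[0,1]$. The defining property of $\phi$ is that if $U_{n+1}\in I_k$, then the standard insertion procedure routes $U_{n+1}$ exactly to the leaf $\phi(I_k)$ of ${\sf Complete}(\BST_n)$; this is immediate from the BST search path, since at every comparison along the descent the value $U_{n+1}$ is larger or smaller than the current node's label precisely according to which interval $I_k$ it belongs to.

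The next ingredient is the classical fact that for i.i.d.\ continuous random variables, the rank permutation and the order statistics are independent. The whole history $(\BST_0,\ldots,\BST_n)$ (even with its labelling) is a measurable function of the rank permutation of $U_1,\ldots,U_n$, whereas the gap lengths $L_k := U_{(k)}-U_{(k-1)}$ (with the convention $U_{(0)}=0$, $U_{(n+1)}=1$) are measurable functions of the order statistics. Consequently, conditionally on $(\BST_0,\ldots,\BST_n)$, the vector $(L_1,\ldots,L_{n+1})$ retains its unconditional (uniform spacings) law, and in particular each $L_k$ has expectation $1/(n+1)$ by exchangeability.

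To conclude, since $U_{n+1}$ is independent of $U_1,\ldots,U_n$, the conditional probability that $U_{n+1}$ falls in the gap $I_k$, given both the history and the order statistics, is simply $L_k$. Integrating against the conditional law of $(L_1,\ldots,L_{n+1})$ gives
\[\P\bigl(U_{n+1}\in I_k \,\big|\, \BST_0,\ldots,\BST_n\bigr) \;=\; \E[L_k] \;=\; \frac{1}{n+1},\]
for every $k\in\{1,\ldots,n+1\}$. Translated through the bijection $\phi$, this shows that conditionally on the full history the new node is placed uniformly at random among the $n+1$ leaves of ${\sf Complete}(\BST_n)$, and that the conditional law depends on the past only through $\BST_n$; this yields simultaneously the Markov property and the announced transition kernel. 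The main subtlety is not analytic but combinatorial: carefully defining the gap/leaf bijection $\phi$ and remembering to condition on the entire history, rather than on $\BST_n$ alone, so as to obtain the Markov property together with the transition law.
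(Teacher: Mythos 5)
Your proof is correct. Note that the paper does not actually prove this lemma: it is stated as a ``well known fact'', and the only justification appearing later (inside the proof that $\Psi^{-1}$ sends $\BST_n$ onto $\RRT_n$) is the bare assertion that $\BST_{n+1}$ is obtained from $\BST_n$ by adding a uniform leaf of ${\sf Complete}(\BST_n)$. Your argument supplies the standard justification: the bijection between the $n+1$ external nodes of ${\sf Complete}(\BST_n)$ and the $n+1$ gaps determined by the order statistics of $U_1,\dots,U_n$, combined with the independence of the rank permutation (which determines the whole tree history and the gap-to-leaf bijection) and the order statistics (which determine the spacings $L_k$, exchangeable with sum $1$, hence each of conditional mean $1/(n+1)$). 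Conditioning on the full history rather than on $\BST_n$ alone, as you do, is exactly what yields both the Markov property and the uniform transition kernel, so the write-up is complete as it stands; the only point worth making explicit is that the gap index attached to a given leaf is itself rank-measurable, so that the conditional expectation $\E[L_k\mid \mathrm{ranks}]=1/(n+1)$ applies to it without circularity.
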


In our framework, we will see that the random recursive tree naturally arises in the study of MVPPs. But thanks to the permutation model, the binary search tree is easier to study. We will therefore prove results on the binary search tree and then deduce their counterparts on the random recursive tree via the rotation correspondence, which is a mapping from the set of planar trees onto the set of incomplete binary trees. 

\vspace{\baselineskip}
{\bf The rotation correspondence} is a map $\Psi$  from  $\Trees_{n+1}$ onto $\IBT_n$ (see Figure \ref{fig:rota}). 
\begin{figure}[h]
\centerline{\includegraphics[width = 12 cm]{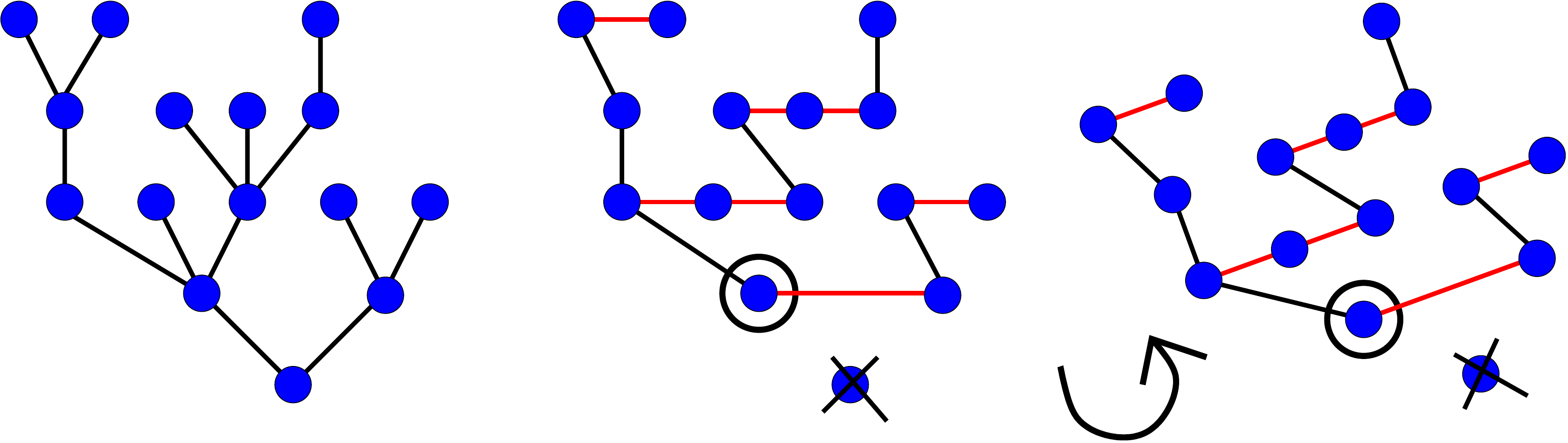}}
\caption{\label{fig:rota} The correspondence by rotation.}
\end{figure} 
The map $\Psi$ is defined at the level of nodes, that is the image of a node $u\in t$ (for a tree $t$) does not depend on $t$, 
but only on~$u$. We denote by $\Psi(u)$ the image of node~$u$ and by $\Psi(t) = \{\Psi(u), u \in t\}$. 

Take a tree $t \in \Trees_{n+1}$ for some $n\geq 1$. 
The tree $\Psi(t)$ is defined as follows (see Figure \ref{fig:rota}):
\begin{itemize}
\item by a matter of size, $t$ contains the node $u=1$; set $\Psi(1)=\varnothing$;
\item assume now that the image $\Psi(t')$ of a subtree $t'$ of $t$ (rooted at $\varnothing$) has been defined. Take a node $v$ in $t \setminus t'$  which is a child of a node $u$ in $t'$:
\begin{itemize} 
\item if $v$ is a leftmost child of node $u$, then set $\Psi(v)=\Psi(u)0$, meaning that the relation parent-leftmost child, is preserved,
\item if $v=a_1 \cdots a_k$ is not the leftmost child of $u$, then $v'=a_1 \cdots a_{k-1}(a_k-1)$ is the left sibling of~$v$. Set  $\Psi(v)=\Psi(v')1$, meaning that the relation sibling-next sibling is transformed into the relation parent-right sibling.
\end{itemize}
\end{itemize}
The following result is classical:
\begin{prop}
For any $n\geq 0$, the rotation correspondence $\Psi$ is a bijection between $\Trees_{n+1}$ and $\IBT_n$.
\end{prop}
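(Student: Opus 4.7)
The strategy is to construct an explicit inverse of $\Psi$ by reversing the two local rewrite rules that define it. First I check well-definedness: by induction on the order in which nodes are processed in the recursive definition, every application of the rules appends either $0$ or $1$ to the image of a previously defined node, so $\Psi(v)\in{\bf X}_2$ for every non-root node $v$ of $t$. Any prefix of $\Psi(v)$ arises as $\Psi(u)$ for some ancestor or left sibling of $v$ (which in turn lies in $t$ by the two defining conditions of a planar tree), so $\Psi(t)$ is prefix-closed and hence lies in $\IBT_n$; a parallel counting induction gives $|\Psi(t)|=|t|-1=n$.

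For the inverse $\Phi:\IBT_n\to\Trees_{n+1}$, I create a fresh root $\varnothing$ of the planar tree to be built, attach the root of $b$ as its leftmost child (when $n\geq 1$), and then apply the rules in reverse: for every node $w\in b$ with corresponding node $u$ in the nascent planar tree, I declare $w0\in b$ (when present) to be the leftmost child of $u$ and $w1\in b$ (when present) to be the next right sibling of $u$. Prefix-closure of $\Phi(b)$ is immediate, since parents and left siblings in the planar tree are processed before their descendants, and the contiguity of sibling indices in $\Phi(b)$ follows because each application of the ``next sibling'' rule increments the sibling index by exactly one, with no gap.

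It remains to verify $\Phi\circ\Psi=\mathrm{id}_{\Trees_{n+1}}$ and $\Psi\circ\Phi=\mathrm{id}_{\IBT_n}$. Both equalities follow by a straightforward induction on the number of nodes, since at each step the two maps apply exactly reciprocal local rules (leftmost child $\leftrightarrow$ append $0$, and next sibling $\leftrightarrow$ append $1$). The main obstacle is purely notational: one must set up the recursion so that the node-by-node correspondence between $t$ and $\Psi(t)$ stays clearly bijective at each intermediate stage, and so that the sibling-contiguity condition of planar trees is not inadvertently broken when decoding a binary tree via $\Phi$. Once this bookkeeping is fixed, the bijectivity of $\Psi$ falls out of the recursive definition itself.
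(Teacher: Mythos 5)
Your argument is correct, but note that there is nothing in the paper to compare it with: the proposition is stated as a classical fact, and in the proof of the later lemma on $\Psi^{-1}(\BST_n)$ the bijectivity of $\Psi$ is explicitly dismissed as folklore with a reference (Marckert and references therein). What you supply is precisely the standard proof of that folklore statement: $\Psi$ translates the relation ``leftmost child'' into ``append $0$'' and ``next sibling'' into ``append $1$'', and the inverse is obtained by decoding these two rules, with the special treatment of the root (your fresh root $\varnothing$, whose leftmost child is the preimage of the root of the binary tree) matching the paper's convention that the first child of the root of $t$ is sent to $\varnothing$. Your two verification points are indeed the only ones that matter: that $\Psi(t)$ is prefix-closed in ${\bf X}_2$ with $|\Psi(t)|=|t|-1$, and that the decoded set satisfies the planar-tree axioms, in particular sibling contiguity, which your observation about the ``next sibling'' rule advancing the sibling index by exactly one settles. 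One small imprecision: the word obtained from $\Psi(v)$ by deleting its last letter is $\Psi$ of the parent of $v$ (last letter $0$) or of the immediate left sibling of $v$ (last letter $1$); iterating, a general prefix of $\Psi(v)$ is the image of a node reached from $v$ by an arbitrary sequence of parent and left-sibling moves, not necessarily an ancestor or a left sibling of $v$ itself. The induction closes all the same because planar trees are stable under both moves (this is exactly the two defining conditions), but the statement should be phrased that way. With that rewording, and the trivial check of the degenerate case $n=0$ (one-node tree mapped to the empty binary tree), your proof is complete and is the argument the paper implicitly relies on.
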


The following definitions and lemmas will be useful when translating information on the topology of the binary search tree  into information on the topology of the random recursive tree.
\begin{df} For any two nodes $u_1$ and $u_2$ in a tree, we denote by $u_1 \wedge u_2$ their deepest common ancestor, being their longest common prefix.
For any word $u\in {\bf X}_2$, we define the left-depth $|u|_\ell$ of $u$ as the numbers of $0$-bits it contains.
\end{df}

The rotation correspondence has the following property:
\begin{lem}
$(i)$ For any integer $n$, for any tree $t\in \Trees_{n+1}$ and any node $u\in t$, we have
$|u| = |\Psi(u)|_\ell + 1$.\\
$(ii)$ For any planar tree $t$, and any nodes $u_1, u_2 \in t$, $\Psi(u_1 \wedge u_2)$ is the longest prefix $\omega$ of $\Psi(u_1) \wedge \Psi(u_2)$ such that $|\omega|_\ell < |\Psi(u_1) \wedge \Psi(u_2)|_\ell$.\\ 
$(iii)$ In particular, $|u_1 \wedge u_2| = |\Psi(u_1) \wedge \Psi(u_2)|_\ell$.
\end{lem}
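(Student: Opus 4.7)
The plan is to exploit the fact that, in the inductive definition of $\Psi$, appending a $0$ corresponds to the parent-to-leftmost-child step in the planar tree (which increases the depth by one), while appending a $1$ corresponds to the left-sibling-to-next-sibling step (which keeps the depth constant). The three identities then follow by tracking what each bit of the binary word encodes.

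I would first prove (i) by induction on $|u|$, the base case coming from $\Psi$ sending the first child of the root to $\varnothing$, where $|u| = 1 = |\varnothing|_\ell + 1$. For the inductive step, if $v$ is the leftmost child of some $u$ then $|v| = |u|+1$, $\Psi(v) = \Psi(u)\,0$, and $|\Psi(v)|_\ell = |\Psi(u)|_\ell + 1$; if $v$ is the immediate right sibling of some $v'$ then $|v| = |v'|$, $\Psi(v) = \Psi(v')\,1$, and $|\Psi(v)|_\ell = |\Psi(v')|_\ell$. In either case the identity is preserved.

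For (ii) and (iii), a short induction on $a$ yields the formula $\Psi(wa) = \Psi(w)\,0\,1^{a}$ for every non-root node $w$ and every $a\geq 0$ with $wa \in t$ (with a small adjustment when $w=\varnothing$). Assume $u_1,u_2$ incomparable, which is the natural regime for the statement (if one is an ancestor of the other then $\Psi(u_1) \wedge \Psi(u_2)$ equals one of the $\Psi(u_i)$ and admits no proper prefix of strictly smaller left-depth). Set $w := u_1 \wedge u_2$ and write $u_1 = wa\,\alpha_1$, $u_2 = wb\,\beta_1$ with $0 \leq a < b$. Since the $\Psi$-image of any strict descendant of $wa$ starts with $\Psi(wa)\,0$, the bit of $\Psi(u_1)$ immediately after the prefix $\Psi(w)\,0\,1^{a}$, if it exists, is a $0$, while the corresponding bit of $\Psi(u_2)$ is a $1$, because the $1$-run following $\Psi(w)\,0$ in $\Psi(u_2)$ has length exactly $b > a$. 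Hence $\Psi(u_1) \wedge \Psi(u_2) = \Psi(w)\,0\,1^{a}$, whose left-depth equals $|\Psi(w)|_\ell + 1 = |w|$ by (i), which is exactly (iii). For (ii), the prefixes of $\Psi(w)\,0\,1^{a}$ of left-depth at most $|\Psi(w)|_\ell$ are precisely the prefixes of $\Psi(w)$, since the $0$ at position $|\Psi(w)|+1$ raises the left-depth by one; the longest of them is $\Psi(w) = \Psi(u_1 \wedge u_2)$.

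The only delicate point is the bookkeeping around the convention that $\varnothing \in t$ has no $\Psi$-image: the formula $\Psi(wa) = \Psi(w)\,0\,1^{a}$ needs a small adaptation in the case $w = \varnothing$ (where it degenerates to $\Psi(a) = 1^{a}$), and (ii)--(iii) must be read as statements about incomparable pairs, consistently with the depth shift of~(i).
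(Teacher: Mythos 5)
Your proof is correct, and it is in fact more complete than what the paper offers: the paper states $(i)$ and $(ii)$ without proof (treating them as immediate from the definition of the rotation correspondence) and only records the one-line deduction of $(iii)$ from $(i)$ and $(ii)$, via $|\Psi(u_1\wedge u_2)|_\ell = |\Psi(u_1)\wedge\Psi(u_2)|_\ell-1$ and $|\Psi(u_1\wedge u_2)|_\ell=|u_1\wedge u_2|-1$. You instead prove $(i)$ by induction along the two construction rules of $\Psi$ (a $0$-bit for ``leftmost child'', a $1$-bit for ``next sibling''), establish the explicit formula $\Psi(wa)=\Psi(w)\,0\,1^{a}$, and identify $\Psi(u_1)\wedge\Psi(u_2)=\Psi(wa)$ where $w=u_1\wedge u_2$ and $a$ is the smaller child index; $(ii)$ and $(iii)$ then drop out of this identification together with $(i)$. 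This buys an actual verification of the two unproved assertions, and correctly pins down the implicit conventions (the root has no $\Psi$-image, and $(ii)$--$(iii)$ are to be read for incomparable $u_1,u_2$; indeed, if $u_1\preceq u_2$ then $\Psi(u_1)\wedge\Psi(u_2)=\Psi(u_1)$ and $(iii)$ is off by one, so the restriction is genuinely needed).

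One small inaccuracy, which does not affect your main argument: in the parenthetical about the ancestor case you claim that $\Psi(u_1)\wedge\Psi(u_2)$ ``admits no proper prefix of strictly smaller left-depth''. That is only true when $u_1$ is a child of the root (so that $\Psi(u_1)=1^{a}$ has left-depth $0$); in general, e.g. $u_1=00$ with $\Psi(u_1)=0$, such a prefix exists ($\varnothing$), and the correct reason the comparable case is excluded is that $(ii)$ would then return a strict ancestor of $\Psi(u_1\wedge u_2)=\Psi(u_1)$ rather than $\Psi(u_1)$ itself, and $(iii)$ fails by one, as noted above.
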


Notice that $(iii)$ follows from $(i)$ and $(ii)$ since $|\Psi(u_1\land u_2)|_\ell = |\Psi(u_1)\land \Psi(u_2)|_\ell - 1$ and $|\Psi(u_1\land u_2)|_\ell = |u_1\land u_2|-1$. 

\begin{lem} 
$(i)$ The rotation correspondence $\Psi$ is a bijective map from $\Trees_{n+1}$ onto $\IBT_n$.\\  
$(ii)$ Its inverse, $\Psi^{-1}$ sends $\BST_n$ onto $\RRT_n$.
\end{lem}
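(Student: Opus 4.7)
The plan is as follows. Part $(i)$ is merely a restatement of the classical proposition stated just above, so nothing new needs to be proved; the substance of the lemma is part $(ii)$. I will prove $(ii)$ by showing that the Markov chains $(\Psi(\RRT_n))_{n\geq 0}$ and $(\BST_n)_{n\geq 0}$ share the same one-step transition dynamics, and conclude by induction on $n$.

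First, for any planar tree $t$ I will construct a canonical bijection $\phi_t$ between the nodes of $t$ and the leaves of ${\sf Complete}(\Psi(t))$. Given $u\in t$ with current children $u0,\dots,u(c-1)$ (for some $c\geq 0$), set $\phi_t(u):=\Psi(u)0$ if $c=0$, and $\phi_t(u):=\Psi(u(c-1))1$ if $c\geq 1$. A case analysis using the defining rules of $\Psi$ (parent--leftmost-child becomes parent--left-child; sibling--next-sibling becomes parent--right-child) shows that $\phi_t(u)$ is always a leaf of ${\sf Complete}(\Psi(t))$ and that $\phi_t$ is injective; since both finite sets have cardinality $|t|$ (a complete binary tree with $|\Psi(t)|=|t|-1$ internal nodes has $|t|$ leaves), $\phi_t$ is then bijective.

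Second, unwinding the definition of $\Psi$ shows that if $t'$ is obtained from $t$ by the $\RRT$ step (adding the new child $uc$ at node $u$), then $\Psi(t')=\Psi(t)\cup\{\phi_t(u)\}$, which is exactly the $\BST$ step at the leaf $\phi_t(u)$. The two clauses in the definition of $\phi_t$ match precisely the two clauses defining $\Psi$ on the freshly added node. The proof then concludes by induction on $n$: assuming $\Psi(\RRT_n)\eqd \BST_n$, the bijection $\phi$ transports a uniformly chosen node of $\RRT_n$ into a uniformly chosen leaf of ${\sf Complete}(\Psi(\RRT_n))$, hence $\Psi(\RRT_{n+1})\eqd \BST_{n+1}$.

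The main delicate point is the verification that $\phi_t$ is a bijection onto the leaves of ${\sf Complete}(\Psi(t))$: surjectivity requires checking that every such leaf arises either as $\Psi(u)0$ for some childless $u\in t$ or as $\Psi(v)1$ for some $v\in t$ with no right sibling, which is a direct consequence of the sibling--right-child encoding built into $\Psi$. Once this combinatorial check is in place, the distributional statement follows cleanly from the induction described above.
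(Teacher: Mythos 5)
Your proposal is correct and takes essentially the same route as the paper: both arguments match the one-step dynamics of the two Markov chains through the rotation correspondence (the paper transports the uniform-leaf insertion of the BST through $\Psi^{-1}$ and reads it as the addition of a last child to a uniform node, while you transport the uniform-node insertion of the RRT through $\Psi$), your explicit bijection $\phi_t$ between the nodes of $t$ and the leaves of ${\sf Complete}(\Psi(t))$ merely spelling out what the paper dispatches with ``observing the effect of this insertion, one sees''. The only cosmetic point is the degenerate base case where the root of the one-node tree is childless, so that $\phi_t$ as written is undefined there; starting the induction at $n=1$ (as the paper implicitly does) removes the issue.
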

\begin{proof}
The first assertion is folklore (see e.g. Marckert \cite{JFM_RSA} and references therein); 
let us focus on the second one.
Under the permutation model, 
the dynamics of the sequence $(\BST_n, n\geq 1)$ is simple: 
First, $\BST_1$ is reduced to the root.
Now, assume that $\BST_n$ has been defined and is an incomplete binary tree with $n$ nodes.
Let $L_{n}$ be the set of leaves of ${\sf Complete }(\BST_n)$.
It is easy to see that $L_n$ has $n+1$ elements, and that $\BST_{n+1}$ is obtained from $\BST_n$ 
by adding a uniform element of $L_n$. 
Observing the effect of this insertion on $\Psi^{-1}(\BST_n)$, 
one sees that this corresponds to the addition of a child with label $n+1$ as last child of a node 
chosen uniformly at random among the nodes of $\Psi^{-1}(\BST_n)$. 
In other words, the image of the dynamics of the binary search tree 
through the rotation correspondence is the dynamics of random recursive trees $\RRT_n$.
\end{proof}

\paragraph{About the sizes of subtrees in BST.} 

Again the content of this paragraph is well known, and we give explanations principally for the sake of completeness (see e.g. Devroye \& Reed~\cite{De-Reed}, Broutin \& Devroye~\cite{BD}, Chauvin \& al~\cite{chauvin2005} for examples of use of this method). 

We focus here on $\ov{\BST}_n$, the enriched binary search tree
associated a sequence of uniform random variables $(U_i,i\geq 0)$. 
By construction, $U_1$ is inserted to the root $\varnothing$, 
then the $U_i$'s that are smaller than $U_1$ will be inserted in the subtree rooted at~$0$ 
and the ones larger than $U_1$ will be inserted in the subtree rooted at~$1$. 
For $i\in\{0,1\}$, we denote by $\ov{\BST}_n^{(i)}$ the subtree of $\ov{\BST}_n$ rooted at $i$ (being one of the two children of~$\varnothing$). Further, for any node $u$, we let $\ov{\BST}_n^{(u)}$ the subtree of $\ov{\BST}_n$ rooted at $u$.
We denote by~$\pi(\ov{\BST}_n)$ the first coordinate of the pair $\ov{\BST_n}$ 
(it is distributed as $\BST_n$, but we need to keep the overline to denote the enriched model).
\begin{lem} \label{lem:rep}
$(i)$ Conditionally on $U_1$, $|{\BST}_n^{(0)}|$ is binomial $(n-1,U_1)$, and conditionally on $|\BST_n^{(0)}|=k$,
$\pi(\ov{\BST}_n^{(0)})$ and $\pi(\ov{\BST}_n^{(1)})$ are independent and distributed as $\BST_k$ and $\BST_{n-1-k}$.\\
$(ii)$ We have 
 \ben\label{eq:limits}
 n^{-1}\l(|\ov{\BST}_n^{(0)}|,|\ov{\BST}_n^{(1)}|\r) &\as& (U_1,1-U_1), 
 \een
$(iii)$ Set a labelling of the complete binary tree ${\bf X}_2=\{\varnothing \}\cup \bigcup_{n\geq 1} \{0,1\}^n$, by choosing a uniform random variable per node $(V_u, u \in {\bf X}_2)$ and by labelling $u0$ by $W_{u0}=V_u$ and $W_{u1}=1-V_u$ (the root $\varnothing$ is labelled by $W_\varnothing =1$). We have, for all finite subset $F$ of ${\bf X}_2$,
\ben\label{eq:limits2}
n^{-1}\l(|\ov{\BST}_n^{(u)}|, u \in F\r)\as \l( \prod_{z \preceq u} W_{z}, u \in F\r). 
\een
\end{lem}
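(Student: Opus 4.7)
I would prove the three parts in order, using the i.i.d.\ sequence $(U_i)_{i\geq 1}$ of uniform $[0,1]$ variables that generates $\ov{\BST}_n$.

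\textit{Part (i).} The insertion rule places $U_1$ at the root, and for $i\geq 2$ the variable $U_i$ is inserted into $\ov{\BST}_n^{(0)}$ if and only if $U_i < U_1$. Conditionally on $U_1=u$, the events $\{U_i<u\}$ for $i=2,\dots,n$ are independent Bernoulli$(u)$, which gives the conditional binomial law of $|\BST_n^{(0)}|$. For the conditional laws of the two subtrees, observe that, conditionally on $U_1=u$ and on the partition of $\{U_2,\dots,U_n\}$ into those falling in $[0,u]$ (a set of cardinality $k$) and those in $[u,1]$, the first group is i.i.d.\ uniform on $[0,u]$ and the second i.i.d.\ uniform on $[u,1]$, and the two groups are independent. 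The binary search tree structure is invariant under strictly increasing relabellings; rescaling by $x\mapsto x/u$ and $x\mapsto (x-u)/(1-u)$ respectively converts each group into an independent i.i.d.\ uniform $[0,1]$ sample of sizes $k$ and $n-1-k$, whence $\pi(\ov{\BST}_n^{(0)})\eqd\BST_k$ and $\pi(\ov{\BST}_n^{(1)})\eqd\BST_{n-1-k}$ conditionally and independently.

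\textit{Part (ii).} Conditionally on $U_1$, $|\ov{\BST}_n^{(0)}|$ is a sum of $n-1$ i.i.d.\ Bernoulli$(U_1)$ variables by (i), so the strong law of large numbers gives $n^{-1}|\ov{\BST}_n^{(0)}|\to U_1$ almost surely; the companion convergence $n^{-1}|\ov{\BST}_n^{(1)}|\to 1-U_1$ follows from the identity $|\ov{\BST}_n^{(0)}|+|\ov{\BST}_n^{(1)}|=n-1$.

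\textit{Part (iii).} The idea is to iterate (i) and read off the splitting proportions along the tree. I would construct an i.i.d.\ family $(V_u,u\in\mathbf{X}_2)$ of uniform $[0,1]$ variables as follows: set $V_\varnothing:=U_1$; by (i), the sequence $(U_i/V_\varnothing)$ taken over those $i\geq 2$ with $U_i<V_\varnothing$ is i.i.d.\ uniform on $[0,1]$ and generates $\ov{\BST}_n^{(0)}$, and symmetrically $((U_i-V_\varnothing)/(1-V_\varnothing))$ generates $\ov{\BST}_n^{(1)}$; recursing into each subtree produces a variable $V_u$ at every node $u\in\mathbf{X}_2$, and these variables are mutually independent and uniform (if a subtree is empty at finite time, complete the family with independent auxiliary uniforms, which has no effect on the dynamics). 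Then by induction on $|u|$: applying (ii) to the sub-BST rooted at $u$ (which, by the induction hypothesis, has size equivalent to $n\prod_{z\preceq u}W_z$ and, since this product is a.s.\ strictly positive, tends almost surely to infinity) yields
\beq
\frac{|\ov{\BST}_n^{(u0)}|}{|\ov{\BST}_n^{(u)}|}\as V_u=W_{u0},\qquad \frac{|\ov{\BST}_n^{(u1)}|}{|\ov{\BST}_n^{(u)}|}\as 1-V_u=W_{u1},
\eq
and multiplying telescopically along the ancestor chain of a given node gives the claimed convergence. Since $F$ is finite, a union bound over its nodes upgrades this to the joint almost sure convergence stated in \eref{eq:limits2}.

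The main obstacle is the coupling step in (iii): one must check that the family $(V_u)$ arising recursively from the dynamics really is i.i.d.\ uniform, and that the SLLN of (ii) may be applied uniformly inside every subtree despite the random (and possibly initially zero) subtree sizes. The independence is inherited from the independence between the two halves in (i) at each step, and the uniform rescaling preserves the uniform law; the activation time of each node $u$ is a.s.\ finite because $\prod_{z\preceq u}W_z>0$ a.s., so the induction eventually reaches every node of $F$.
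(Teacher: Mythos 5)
Your proposal is correct and follows essentially the same route as the paper's proof: (i) via the conditional i.i.d.\ uniform structure after rescaling, (ii) via the strong law of large numbers, and (iii) by recursing into subtrees, whose sizes tend to infinity a.s., and multiplying the ratios along the ancestor line. Your explicit construction of the i.i.d.\ family $(V_u)$ is simply a more detailed write-up of the coupling the paper leaves implicit, so no genuinely different ideas are involved.
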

\begin{proof} $(i)$ Conditionally on $U_1$, the random variables $U_2, \ldots, U_n$ are i.i.d.\  
and each of them is smaller than $U_1$ with probability $U_1$.
Also, conditionally on $U_i\leq U_1$, the random variable $U_i$ is uniformly distributed on $[0, U_1]$ (for all $2\leq i\leq n$).
Therefore, conditionally on $|\ov{\BST}_n^{(0)}|=k$, 
$\pi(\ov{\BST}_n^{(0)})$ is distributed as~$\BST_k$.

$(ii)$ is proved by the exact same argument using additionally the strong law of large number.

$(iii)$ First note that, since $F$ is a finite subset of ${\bf X}_2$, for any node $u\in F$, $|\BST_n^{(u)}| \to \infty$, when $n$ tends to infinity. Let $u\in F$ and denote by $v$ its parent. From~$(i)$, we know that, conditionally on its size, the subtree rooted at $v$ is a random binary search tree under the permutation model. Since the size of the subtree rooted at $v$ goes to infinity with $n$, we can apply~$(ii)$ and get that the size of the tree rooted at~$u$ divided by the size of the tree rooted at the parent of $u$ is asymptotically distributed as $W_u$ (by definition of the~$W_z$'s). The same argument can be done recursively for the parent of $u$, and all its ancestors till the root, which gives the stated result.
\end{proof}

We end this section by a lemma whose proof is straightforward. 
Let $t$ be a binary tree and $u$ a node of~$t$. 
Denote by $\TR_u(t)$ the tree obtained by exchanging the two subtrees of $t$ rooted at $u0$ and $u1$. 
Formally $\TR_u(t)$ is obtained by replacing all words (nodes) $u0w$ in $t$ (resp.\ $u1w$) by $u1w$ (resp.\ $u0w$). 
If $u\not\in t$, let $\TR_u(t)=t$.  
\begin{lem}\label{eq:grg} 
Let $\BST_n$ be the random binary search tree under the permutation model, for some $n\geq 0$. 
\begin{enumerate}[(i)]
\item For any node $u$,
$\TR_u(\BST_n)\eqd \BST_n$;
\item let $w=w_1 \cdots w_{|w|}$ be a node chosen uniformly in $\BST_n$, then the letters $w_i$'s are i.i.d.\ random variables, uniformly distributed on~$\{0,1\}$.
\end{enumerate}
\end{lem}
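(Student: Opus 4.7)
The plan is to establish (i) via an invariance-of-distribution argument based on the enriched binary search tree, and then derive (ii) from (i) by averaging the uniform choice of node over the resulting symmetries.

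For (i), I would begin with the root case $u = \varnothing$. Lemma~\ref{lem:rep}(i) asserts that, conditionally on $|\BST_n^{(0)}| = k$, the two subtrees $\pi(\ov{\BST}_n^{(0)})$ and $\pi(\ov{\BST}_n^{(1)})$ are independent and distributed as $\BST_k$ and $\BST_{n-1-k}$ respectively. A short computation (integrating the Binomial$(n-1,U_1)$ law over $U_1$ uniform on $[0,1]$) shows that $|\BST_n^{(0)}|$ is uniform on $\{0,1,\ldots,n-1\}$, hence has the same law as $n-1-|\BST_n^{(0)}|$. Combining these two observations shows that swapping the two root-subtrees leaves the joint distribution unchanged, so $\TR_\varnothing(\BST_n) \eqd \BST_n$. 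For a general node $u$, I would condition on $|\BST_n^{(u)}| = m$ (the case $m = 0$ being trivial, since $\TR_u$ acts as the identity there). Iterating Lemma~\ref{lem:rep}(i) along the ancestors of $u$ shows that, conditionally on the tree outside the subtree at $u$, $\pi(\ov{\BST}_n^{(u)}) \sim \BST_m$. Applying the root case to this embedded $\BST_m$ then yields $\TR_u(\BST_n) \eqd \BST_n$.

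For (ii), I would observe that for any $k \geq 1$ and any $b_1,\ldots,b_k \in \{0,1\}$,
\[
\P(w_1 = b_1,\ldots, w_k = b_k,\ |w| \geq k) \;=\; \frac{1}{n}\,\E\bigl[|\BST_n^{(b_1 \cdots b_k)}|\bigr],
\]
since the event on the left-hand side exactly means that $w$ belongs to the subtree of $\BST_n$ rooted at the word $b_1 \cdots b_k$. Any two words of the same length can be transformed one into the other by a sequence of bit-flips, each corresponding to a subtree swap $\TR_u$ with $u$ an appropriate prefix; by (i), all these swaps preserve the distribution of $\BST_n$, so $\E[|\BST_n^{(w)}|]$ depends only on $|w|$. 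Summing the displayed identity over $(b_1,\ldots,b_k) \in \{0,1\}^k$ gives $\P(|w| \geq k)$, so each term must equal $2^{-k}\,\P(|w| \geq k)$. This is precisely the assertion that, conditionally on $|w| \geq k$, the vector $(w_1,\ldots,w_k)$ is uniform on $\{0,1\}^k$, that is, the bits of $w$ are i.i.d.\ Bernoulli$(1/2)$. The only delicate step is the iteration of Lemma~\ref{lem:rep}(i) needed to extend the root case of (i) to a general node $u$, but this is routine given the recursive structure of the enriched model.
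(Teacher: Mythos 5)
Your proposal is correct and follows the same route as the paper, which simply asserts that (i) ``follows by symmetry of the construction'' and that (ii) is a straightforward consequence of (i); your argument is a detailed filling-in of exactly that symmetry, using the recursive decomposition of Lemma~\ref{lem:rep}(i) for (i) and deducing (ii) from the distributional invariance under the swaps $\TR_u$. The only point worth noting is that your conclusion for (ii) is stated conditionally on $\{|w|\geq k\}$; a one-line telescoping over prefixes (or noting directly that $\P(w=b)=\P(b\in\BST_n)/n$ depends only on $|b|$ by (i)) upgrades this to uniformity of $(w_1,\dots,w_k)$ conditionally on $\{|w|=k\}$, which is the form used later in the paper.
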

\begin{proof} $(i)$ follows by symmetry of the construction of the random binary search tree. $(ii)$ is a straightforward consequence of $(i)$.
\end{proof}

\subsection{Branching Markov chain}
\label{sec:MBC}
Branching random walks are classical objects in probability theory. They are random walks indexed by a rooted tree: with each node $u$ of a tree $t$ is associated a random variable $\Delta_u$, the family $(\Delta_u,u \in t)$ being i.i.d., and, by convention, we set $\Delta_\varnothing=0$ where $\varnothing$ is the root. Now, the branching random walk is the pair $(t,(X_u,u\in t))$ where $X_u = \sum_{v \preceq u } \Delta_v$, 
so that along a branch $X_v$ evolves as a random walk. 
The name branching random walk comes from the dependence structure: for any two nodes $(u,v)\in t$,
\ben
(X_u, X_v)\eqd (Z_{|u \wedge v|},Z_{|u\wedge v|}) + (Z'_{|u|-|u \wedge v|},Z''_{|v|-|u\wedge v|})
\een
where in the right hand side $Z,Z',Z''$ denote independent random walks starting at 0. 
At the core of our work lies the notion of branching Markov chains, 
%They appear in the probability literature for various reasons. 
which have been considered 
%In connection with \Pol urn processes, 
%they have been considered 
in Bandyopadhyay and Thacker \cite{BT1}, also in the context of \Pol urn processes. 
Here we extend a bit their definition, and go further in the analysis to prove Theorem \ref{th:main}.
\begin{df} 
A branching Markov chain (BMC) with initial position $X_\varnothing$ and family of kernels $(\ov{K}^c,c\geq~1)$
is a stochastic process $X(t)=(X_u, u \in t)$ indexed by a tree $t$ with the following properties:
\begin{itemize}
\item the variables attached to the children of the root $(X_j, 0\leq j \leq c_{\varnothing}-1)$, 
are independent and distributed as $\ov{K}^{c_{\varnothing}}(X_\varnothing,\,\cdot\,)$; in other words  for any Borel sets $(B_0, \ldots, B_{c_\varnothing -1})$,
\ben
\P((X_{0}, \ldots, X_{c_{\varnothing}-1}) \in B_0\times \cdots \times B_{c_\varnothing-1} | X_\varnothing) =\ov{K}^{c_\varnothing}(X_\varnothing, B_0\times \cdots \times B_{c_\varnothing-1}).
\een
\item conditionally on $(X_j, 0\leq j \leq c_{\varnothing}-1)$, 
the families $X(t_j), 0\leq j \leq c_{\varnothing}-1$ attached to subtrees $t_j$ rooted at the children of the root, are independent BMCs with respective initial positions $X_j$.
\end{itemize}
\end{df}

We will call \it $K$-simple branching Markov chain (SBMC) with kernel $K$\rm, 
a BMC such that, for all $c\in\{1, 2, \ldots\}$ and $x\in \PP$, 
for all Borel sets $B_0, \ldots, B_{c-1}$,
\ben
\ov{K}^c(x,  B_0\times \cdots \times B_{c_1})= K(x,B_0)\times\cdots\times K(x,B_{c-1}).
\een
In a $K$-SBMC the values associated to siblings are independent conditionally on the value of their parent and~$X_v$ 
evolves on each branch $B_u$ as a Markov chain with initial position $X_{\varnothing}$ and kernel~$K$.
For any two nodes $(u,v)\in t^2$, we have
\ben
(X_u,X_v)\eqd (M_{|u \wedge v|},M_{|u\wedge v|}) + (M'_{|u|-|u \wedge v|}-M_{|u\wedge v|},M''_{|v|-|u\wedge v|}-M_{|u \wedge v|})
\een
where, in the right hand side, $M$ is a Markov chain starting at $X_\varnothing$, and, conditionally on $M_{|u\wedge v|}$, $(M',M'')$ are two independent Markov chains starting at position $M_{|u \wedge v|}$ (all these Markov chains having the same kernel ${K}$).

\subsection{Coupling of the MVPP with a BMC}
We couple (or encode) the sequence $(\MM_n, n\geq 0)$ with a sequence of branching Markov chains $(X(\RRT_n), n\geq0)$ on the random recursive tree.

In Section \ref{sec:MBC}, we defined BMCs on a fixed underlying tree $t$. 
We now need to consider a sequence of BMCs having as sequence of underlying trees the sequence of $(\RRT_n, n\geq 0)$. 
The sequence $(\RRT_n, n\geq 0)$ being a nested sequence of trees, 
we can define a nested sequence of BMCs using these trees, as follows.
First assume that a kernel $K$ and an initial distribution $\mathcal M_0$ verifying ${\cal M}_0(\PP)=1$ are given. 
Let $u_{n+1}$ be the only node in $\RRT_{n+1}\setminus \RRT_n$, 
and let $v$ be its parent in~$\RRT_{n+1}$. 
Conditionally on the labels $(X_u, u \in \RRT_n)$, 
take $X_{u_{n+1}}$ under the distribution $K(X_v,\cdot\,)$. 
This defines a sequence of compatible $K$-SBMC that we denote by $(X(RRT_n),n\geq 0)$.
\begin{lem}\label{lem:coupl_RRT}
Let $(X(\RRT_n),n\geq 0)$ be the sequence of compatible $K$-SBMC defined above, with initial distribution $\mathcal M_0$ such that ${\cal M}_0(\PP)=1$ 
and kernel $K$ defined for any $x \in \PP$, 
\ben K(x,\cdot\,)=\mathcal R_x(\,\cdot\,).\een
Then the process defined for all integers $n$ by
\ben
\mathcal M^{\star}_n = \mathcal M_0 + \sum_{u \in\RRT_n\setminus \{\varnothing\}} \mathcal R_{X_u}\een
satisfies $(\mathcal M_n^\star)_{n\geq 0} = (\mathcal M_n)_{n\geq 0}$ 
where $( \mathcal M_n)_{n\geq 0}$ is the MVPP of initial composition $\mathcal M_0$ 
and replacement measures $(\mathcal R_x)_{x\in\mathcal P}$.
\end{lem}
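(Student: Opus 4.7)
The plan is to prove the identity in law $(\mathcal M_n^\star)_{n \geq 0} \eqd (\mathcal M_n)_{n \geq 0}$ by showing that $(\mathcal M_n^\star)_{n \geq 0}$ is a Markov chain on the space of finite Borel measures on~$\PP$ with the same initial value and transition kernel as the MVPP, then concluding by induction on~$n$. The base case is immediate since $\mathcal M_0^\star = \mathcal M_0$ (the sum is empty) matches the initial composition of the MVPP.

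For the induction step, I would condition on the pair $(\RRT_n, (X_u)_{u \in \RRT_n})$, which determines $\mathcal M_n^\star$, and compute the conditional law of the increment $\mathcal M_{n+1}^\star - \mathcal M_n^\star = \mathcal R_{X_{u_{n+1}}}$. By the random recursive tree dynamics, the parent $v_{n+1}$ of the new node $u_{n+1}$ is uniform on the $n+1$ nodes of $\RRT_n$. Given $v_{n+1}$, the label $X_{u_{n+1}}$ is sampled from $\mathcal M_0$ when $v_{n+1} = \varnothing$, and from $K(X_{v_{n+1}}, \cdot) = \mathcal R_{X_{v_{n+1}}}$ otherwise. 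Using \H 1, each term in the decomposition $\mathcal M_n^\star = \mathcal M_0 + \sum_{u \in \RRT_n \setminus \{\varnothing\}} \mathcal R_{X_u}$ is a probability measure, so the total mass of $\mathcal M_n^\star$ is~$n+1$. Averaging over the uniform parent yields the key identity
\[
\mathbb P\bigl(X_{u_{n+1}} \in \cdot \,\bigm|\, \RRT_n, (X_u)_u\bigr) = \frac{1}{n+1}\Bigl(\mathcal M_0 + \sum_{u \in \RRT_n \setminus \{\varnothing\}} \mathcal R_{X_u}\Bigr)(\cdot) = {\sf Nor}(\mathcal M_n^\star)(\cdot),
\]
which is exactly the MVPP selection rule~\eqref{eq:norm}. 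Since the increment $\mathcal R_{X_{u_{n+1}}}$ depends on the past only through $\mathcal M_n^\star$ and follows the same conditional law as $\mathcal R_{C_{n+1}}$ in the MVPP, both processes share the same Markov transition kernel, completing the induction.

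The main subtlety I anticipate is the handling of the root's contribution: the formula identifies $\mathcal M_0$ with a ``replacement measure attached to $\varnothing$'', which is only consistent if the first-generation labels are drawn directly from $\mathcal M_0$ rather than through $K(X_\varnothing, \cdot)$. I would address this by adopting the convention, natural for the BMC ``with initial distribution $\mathcal M_0$'', that children of the root carry labels sampled from $\mathcal M_0$, after which all subsequent generations follow the kernel $K$. This makes the matching between the $n+1$ summands of $\mathcal M_n^\star$ and the $n+1$ nodes of $\RRT_n$ exact, and the induction goes through cleanly.
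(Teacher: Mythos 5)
Your proof is correct and follows essentially the same route as the paper's: both arguments identify $(\mathcal M_n^\star)_{n\geq 0}$ as a Markov chain whose transition kernel coincides with that of the MVPP, you merely making explicit the averaging over the uniform parent (giving $X_{u_{n+1}}\sim{\sf Nor}(\mathcal M_n^\star)$) that the paper's proof leaves implicit. Your closing remark about the root is well taken: the lemma does require the first-generation labels to be sampled from ${\sf Nor}(\mathcal M_0)=\mathcal M_0$ rather than from $K(X_\varnothing,\cdot)$, which is precisely the convention encoded in the paper's decomposition $(a)$/$(b')$, and with that convention your key identity and the induction go through.
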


\begin{proof}
 It suffices to prove that the sequence of measures $({\cal M}_n^{\star}, n\geq 0)$ is a Markov chain, and that it has the same kernel as~$({\cal M}_n, n\geq 0)$ (as well as the same initial distribution but this is straightforward).

 For the first property, recall that, to build $\RRT_{n+1}$ from $\RRT_n$, 
 one chooses uniformly at random a node $v\in\RRT_n$ and adds a new child $u_{n+1}$ to $v$. 
 Therefore, in the branching random walk, 
 the distribution of the new label $X_{u_{n+1}}$ does not depend on the geometry of tree, 
 but only on the already existing labels $(X_u, u \in \RRT_n)$. 
 This ensures the fact that ${\cal M}_n^{\star}$ is a Markov chain.

 For the second property, it suffices to notice that that the only difference 
 between the MVPP and the BMC representation ${\cal M}_0+\sum_{u \in\RRT_n\setminus \{\varnothing\}} \mathcal R_{X_u}$ is that, 
 in this latter, the data (the current values $X_.$) are differently organised. But the measures ${\cal M}_n^\star$ do not depend on this organisation. 
\end{proof}

\begin{cor}\label{thm:rr} 
Let $(\mathcal M_n, n\geq 0)$ be the MVPP of replacement measures $(\mathcal R_x)_{x\in\PP}$ 
with initial measure ${\cal M}_0$ such that ${\cal M}_0(\PP)=1$.
Let $X(\RRT_n)$ be the $K$-SBMC on the random recursive tree of initial distribution $\mathcal M_0$ 
and kernel $K(x,\,\cdot\,) = \mathcal R_x(\,\cdot\,)$ (for all $x\in\PP$).

Let $(\ov{A_n},\ov{B_n})$ be a pair of independent random variables taken under the random probability distribution $n^{-1}{\cal M}_n$. Then the random variable $(\ov{A_n},\ov{B_n})$ has distribution $(\mathcal R_{X_{U_n}}, \mathcal R_{X_{V_n}})$, where $U_n$ and $V_n$ are two uniform and independent nodes in $\RRT_n$.
\end{cor}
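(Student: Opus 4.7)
The corollary is essentially a reformulation of Lemma~\ref{lem:coupl_RRT} in sampling language. The plan is to rewrite $\mathcal M_n/\mathcal M_n(\PP)$ as a uniform mixture of probability measures indexed by the nodes of $\RRT_n$, and then invoke the standard fact that drawing from a uniform mixture of probabilities amounts to first picking the component uniformly at random and then sampling from that component.

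Concretely, Lemma~\ref{lem:coupl_RRT} gives the BMC decomposition
\[
\mathcal M_n \;=\; \mathcal M_0 \,+\, \sum_{u \in \RRT_n \setminus \{\varnothing\}} \mathcal R_{X_u}.
\]
Under Hypothesis~1 and the normalisation $\mathcal M_0(\PP)=1$, every term on the right-hand side is a probability measure on $\PP$, and there are $|\RRT_n|$ of them, the root contribution $\mathcal M_0$ being identified with the replacement measure at the root. Hence $\mathcal M_n/\mathcal M_n(\PP)$ is a uniform convex combination of these $|\RRT_n|$ probability measures, and drawing $\ov{A_n}$ from it is equivalent to first picking $U_n$ uniformly in $\RRT_n$ (conditionally independent of everything else given $\RRT_n$ and the BMC labels), and then sampling a point under $\mathcal R_{X_{U_n}}$.

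Applying the same argument independently for $\ov{B_n}$ -- which is legitimate since $\ov{A_n}$ and $\ov{B_n}$ are conditionally independent given $\mathcal M_n$ by assumption -- yields the desired joint law, with $V_n$ an independent uniform node of $\RRT_n$. I do not expect any real obstacle: the only mild subtlety is the bookkeeping of the root contribution $\mathcal M_0$ on equal footing with the other $\mathcal R_{X_u}$, which is harmless under the standing assumption $\mathcal M_0(\PP)=1$ because all $|\RRT_n|$ mixture weights are then equal.
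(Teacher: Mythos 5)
Your argument is correct and is essentially the paper's own reasoning: the corollary is stated there without a separate proof, as an immediate consequence of Lemma~\ref{lem:coupl_RRT} read exactly as you read it -- ${\sf Nor}(\mathcal M_n)$ is a uniform mixture of the $|\RRT_n|$ probability components attached to the nodes, so sampling from it is two-stage sampling (uniform node, then its replacement measure), done independently twice for $(\ov{A_n},\ov{B_n})$ by conditional independence given $\mathcal M_n$. Your bookkeeping of the root term (treating the $\mathcal M_0$ component as the root's replacement measure) is the same convention the paper itself adopts in the statement, so nothing further is needed.
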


\begin{rem} \label{rem:bstvsmvpp}
It is interesting to note that  MVPPs can also be encoded by non-simple BMCs indexed by the BST.
To see this, consider $\CBST_n$ the complete binary search trees (this is the binary tree whose set of internal node is $\BST_n$).
Define a branching random chain having $\CBST_n$ as underlying tree, with initial distribution ${\cal M}_0$ 
and kernel $\ov{K}$ defined as followed: for all measurable sets $\mathcal A$ and $\mathcal B$,
\ben\label{eq:ks}
\ov{K}(x, \mathcal A \times \mathcal B) = \frac{1}2\l(1_{x \in A} \RR_x(B) + 1_{x \in B}\RR_x(A)\r).
\een
In other words: to generate the value $(X_{u0},X_{u1})$ of the children of $X_u$, flip a fair coin:
\begin{itemize}
\item if it is tails, set $X_{u0}=X_u$ and draw $X_{u1}$ according to the kernel $\RR_{X_u}(\,\cdot\,)$;
\item if it is heads, then take $X_{u1}=X_u$ and draw $X_{u0}$ according to the kernel $\RR_{X_u}(\,\cdot\,)$.
\end{itemize}
Then, the process defined for all integers $n$ by 
$\mathcal M^{\bullet}_n = \mathcal M_0 + \sum_{\nu\in Leaves\l(\CBST_n\r)} \mathcal R_{Y_\nu}$ 
is equal in distribution to the MVPP of initial composition $\mathcal M_0$ 
and replacement measures $(\mathcal R_x)_{x\in\mathcal P}$. 
Since $(\CBST_n, n\geq 0)$ is also a sequence of nested trees, 
one may define a compatible sequence of BMCs and check that 
$(\mathcal M_n^\bullet)_{n\geq 0} = (\mathcal M_n)_{n\geq 0}$ in distribution.

To see this  one encodes the evolution of the MVPP by a binary search tree, storing the information at the level of leaves (while in the RRT-case, we work at the level of all nodes). When ``one draws a node $u$'' with value $X_u$, we let it there, and add a child to $u$ with value distributed according to $\RR(X_u,\,\cdot\,)$. 
The same encoding can be realised by, instead, drawing only leaves, and when one draws a leaf $u$ with value $X_u$, they add to this leaf two children $u0$ and $u1$, copy the value of $u$ in $u0$ or $u1$ at random with probability $1/2$ and draw the value of the other child at random according to~$\RR(X_u,\,\cdot\,)$. 
\end{rem}

\subsection{Auxiliary results  on RRT's and BST's}

An important ingredient of our proof of Theorem \ref{th:main} is that we know the depth of a node/two nodes in the random recursive tree and in the random binary search tree: 
\begin{prop}\label{prop:inc-tree-BST} 
Let $U_n$ and $V_n$ be two random uniform and independent nodes taken in $\BST_n$.
\begin{enumerate}[(i)]
\item Asymptotically when $n$ goes to infinity, we have
\ben\label{eq:sefdf}
\l(\frac{|U_n| - 2\log n}{\sqrt{2\log n}},\frac{|V_n| - 2\log n}{\sqrt{2\log n}}, |U_n \wedge V_n|\r) \dd \l(\Lambda_1,\Lambda_2,K_{1/3}\r),
\een 
where the three r.v. are independent, $K_{1/3}\sim {\sf Geometric}(1/3)$,  $\Lambda_1$ and $\Lambda_2$ are $\mathcal N(0,1)$-distributed.
\item Asymptotically when $n$ goes to infinity, we have
\ben
\l(\frac{|U_n|_\ell - \log n}{\sqrt{\log n}},\frac{|V_n|_\ell - \log n}{\sqrt{\log n}}, |U_n \wedge V_n|_\ell\r) \dd \l(\Lambda_1,\Lambda_2,K\r),
\een 
where the three r.v. are independent,  $K\sim {\sf Geometric}(1/2)$,  $\Lambda_1$ and $\Lambda_2$ are $\mathcal N(0,1)$-distributed. 
\end{enumerate}
\end{prop}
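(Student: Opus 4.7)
The plan is to derive part~(i) directly using the subtree-size description of Lemma~\ref{lem:rep}(iii), and then deduce part~(ii) via the rotation correspondence, using the identities $|u|=|\Psi(u)|_\ell+1$ and $|u_1\wedge u_2|=|\Psi(u_1)\wedge\Psi(u_2)|_\ell$ recalled in Section~\ref{sec:RRDT}. The key simplification is that Lemma~\ref{lem:rep}(iii) describes, in the large-$n$ limit, subtree proportions in $\BST_n$ in terms of an i.i.d.\ family $(V_u)_{u\in\mathbf{X}_2}$ of uniform $[0,1]$ random variables, reducing the combinatorics of the tree to a simple random environment on the infinite binary tree.

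For the common-ancestor coordinate in~(i), start from the exact identity
\[
\mathbb P\bigl(|U_n\wedge V_n|\geq k\,\big|\,\BST_n\bigr)=\sum_{\substack{u\in\BST_n\\|u|=k}}\bigl(|\BST_n^{(u)}|/n\bigr)^{2},
\]
which holds because the event says exactly that both uniform nodes land in a single level-$k$ subtree. The right-hand side is bounded by $1$, and by Lemma~\ref{lem:rep}(iii) each summand converges almost surely to $\bigl(\prod_{z\preceq u}W_z\bigr)^2$; dominated convergence then yields $\lim_n\mathbb P(|U_n\wedge V_n|\geq k)=\mathbb E\bigl[\sum_{|u|=k}\prod_{z\preceq u}W_z^2\bigr]$. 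A one-step induction on $k$ using the independence of the $(V_u)$ and $\mathbb E[V^2+(1-V)^2]=2/3$ evaluates this to $(2/3)^k$, giving $|U_n\wedge V_n|\dd{\sf Geometric}(1/3)$.

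The marginal CLT $(|U_n|-2\log n)/\sqrt{2\log n}\dd\mathcal N(0,1)$ for the depth of a uniform node in $\BST_n$ is classical. To obtain the joint statement, condition on $U_n\wedge V_n=u$ at level $m$: by Lemma~\ref{lem:rep}(i) the subtrees $\BST_n^{(u0)}$ and $\BST_n^{(u1)}$ are, conditionally on their sizes, independent random BSTs, and $U_n,V_n$ are then independent uniform nodes in them. Since $m$ is tight (by the Geometric limit) and the subtree sizes are of order $n$ (Lemma~\ref{lem:rep}(ii)--(iii)), applying the single-node CLT inside each subtree yields independent Gaussian limits for the two rescaled depths. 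The $O_p(1)$ additive shifts coming from $m$ and from $\log(|\BST_n^{(u\varepsilon)}|/n)$ are absorbed in the $o(\sqrt{\log n})$ correction, and all three coordinates become asymptotically independent, proving~(i).

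Part~(ii) then follows by applying $\Psi^{-1}$: $(\Psi^{-1}(U_n),\Psi^{-1}(V_n))$ is a pair of uniform independent nodes in $\RRT_n$, and the rotation identities give $|U_n|_\ell=|\Psi^{-1}(U_n)|-1$ and $|U_n\wedge V_n|_\ell=|\Psi^{-1}(U_n)\wedge\Psi^{-1}(V_n)|$, so (ii) is the analogue of~(i) for depths in $\RRT_n$. An equivalent self-contained derivation of the Geometric part is to follow the descent of $(U_n,V_n)$ in $\BST_n$ level by level and label each step as $L$ (both descend left), $R$ (both right) or $S$ (split): these three outcomes have marginal probabilities $\mathbb E[V^2]=\mathbb E[(1-V)^2]=\mathbb E[2V(1-V)]=1/3$ and are i.i.d.\ across levels (fresh $V$'s at every node visited), so a short explicit computation gives that the number of $L$-steps before the first $S$-step is ${\sf Geometric}(1/2)$; the depth-CLT at scale $\sqrt{\log n}$ then follows as in~(i). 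The main technical hurdle is the joint convergence: one must handle the limit uniformly in the random position of the common ancestor, which is done by truncating to $\{|U_n\wedge V_n|\leq K\}$, applying the single-node CLT inside the finitely many potential subtrees at level at most $K$ (each of a.s.-positive relative size by Lemma~\ref{lem:rep}(iii)), and then letting $K\to\infty$ using the geometric tail bound.
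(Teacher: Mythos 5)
Your proof of part (i) is correct and, at its core, follows the same strategy as the paper: condition on the common ancestor, use the almost-sure subtree-proportion limits of Lemma~\ref{lem:rep} to see that the two subtrees hanging below it have sizes of order $n$, apply the classical single-node depth CLT (Mahmoud--Pittel) inside each subtree, and absorb the $O_p(1)$ shifts coming from the ancestor's depth and from $\log(|\BST_n^{(u\varepsilon)}|/n)$ into the $\sqrt{\log n}$ scaling; your truncation over $\{|U_n\wedge V_n|\le K\}$ plays the role of the paper's Skorokhod-representation step. One genuine difference is welcome: you derive the ${\sf Geometric}(1/3)$ marginal from the exact identity $\P(|U_n\wedge V_n|\ge k\mid \BST_n)=\sum_{|u|=k}(|\BST_n^{(u)}|/n)^2$ together with $\E[V^2+(1-V)^2]=2/3$, which makes this ingredient self-contained, whereas the paper simply cites Kuba \& Wagner for it. (Both you and the paper silently discard the events $U_n=U_n\wedge V_n$ or $V_n=U_n\wedge V_n$, which have probability $O(1/n)$; worth a sentence.)

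For part (ii) you depart from the paper, and there is one under-justified step. The paper deduces (ii) from (i) by pure symmetry: conditionally on $|U_n\wedge V_n|=k$ the word $U_n\wedge V_n$ is uniform on $\{0,1\}^k$, and the letters of a uniform node are i.i.d.\ Bernoulli$(1/2)$ (Lemma~\ref{eq:grg}), so both the geometric and the Gaussian coordinates of (ii) are binomial thinnings of those of (i), handled by the elementary ``binomial thinning of a CLT'' statement. Your L/R/S descent argument correctly yields the ${\sf Geometric}(1/2)$ coordinate (it amounts to the same Geometric$(1/3)$--Binomial$(1/2)$ composition), but the claim that ``the depth-CLT at scale $\sqrt{\log n}$ then follows as in (i)'' is not literal: the classical result invoked in (i) is a CLT for the depth $|U_n|$ with centering $2\log n$, not for the left-depth $|U_n|_\ell$ with centering $\log n$. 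To close this you must either cite the known CLT for the depth of a uniform node in $\RRT_n$ (Dobrow) and transfer it through the rotation identity $|U_n|_\ell=|\Psi^{-1}(U_n)|-1$, or argue as the paper does that $|U_n|_\ell$ given $|U_n|$ is Binomial$(|U_n|,1/2)$ and apply the thinning lemma. Either fix is one line, but as written the Gaussian marginals in (ii) are asserted rather than proved.
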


As a corollary of this theorem, using the rotation map, we immediately get
\begin{prop}\label{prop:inc-tree} 
Let $U_n$ and $V_n$ be two random uniform and independent nodes taken in  $\RRT_n$.
We have
\ben\label{eq:mult-conv}
\l(\frac{|U_n| - \log n}{\sqrt{\log n}},\frac{|V_n| - \log n}{\sqrt{\log n}}, |U_n \wedge V_n|\r) \dd \l(\Lambda_1,\Lambda_2,K\r),
\een 
where the three r.v. are independent,  $K\sim {\sf Geometric}(1/2)$,  $\Lambda_1$ and $\Lambda_2$ are $\mathcal N(0,1)$-distributed. 
\end{prop}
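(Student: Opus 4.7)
The plan is to deduce this corollary from Proposition \ref{prop:inc-tree-BST}(ii) by transporting the statement from the binary search tree to the random recursive tree through the rotation correspondence $\Psi$, using the depth translation formulas stated in the lemma following the definition of $\Psi$.

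First, I would set up the transfer of uniform samples. By the lemma stating that $\Psi^{-1}$ sends $\BST_n$ onto $\RRT_n$ (as a bijection between incomplete binary trees and planar trees of the appropriate sizes), a uniform random node in $\BST_n$ corresponds, through $\Psi^{-1}$, to a uniform random node of $\RRT_n$ up to the single ``root'' node discrepancy arising from the indexing shift inherent to $\Psi\colon \Trees_{n+1}\to \IBT_n$. Since this discrepancy involves a single node out of $n+1$, its contribution to any joint distribution of depths is $O(1/n)=o(1)$ and therefore cannot affect limit laws.

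Second, I would invoke the depth translation identities:
\[
|u| \;=\; |\Psi(u)|_\ell + 1, \qquad |u_1\wedge u_2| \;=\; |\Psi(u_1)\wedge \Psi(u_2)|_\ell.
\]
Applied to uniform independent nodes $U_n,V_n\in\RRT_n$ and their images $\Psi(U_n),\Psi(V_n)\in\BST_n$, these give
\[
\frac{|U_n|-\log n}{\sqrt{\log n}} \;=\; \frac{|\Psi(U_n)|_\ell -\log n}{\sqrt{\log n}} + \frac{1}{\sqrt{\log n}},
\]
with the analogous identity for $V_n$, while $|U_n\wedge V_n|=|\Psi(U_n)\wedge \Psi(V_n)|_\ell$ is an exact equality. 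The additive $1/\sqrt{\log n}$ terms vanish as $n\to\infty$.

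Third, I would apply Proposition \ref{prop:inc-tree-BST}(ii) to the pair $(\Psi(U_n),\Psi(V_n))$, which is (up to the negligible root correction described above) a uniform independent pair of nodes in $\BST_n$. That proposition yields exactly the joint convergence of the three rescaled left-depth quantities to $(\Lambda_1,\Lambda_2,K)$ with $\Lambda_1,\Lambda_2\sim\mathcal N(0,1)$ and $K\sim \textsf{Geometric}(1/2)$, all independent. Combining with Slutsky's lemma to absorb the $O(1/\sqrt{\log n})$ perturbations, the conclusion of Proposition~\ref{prop:inc-tree} follows. There is no genuine obstacle here: the only point requiring even a moment's care is the bookkeeping around the root/indexing mismatch between $\Trees_{n+1}$ and $\IBT_n$, which is straightforwardly asymptotically negligible.
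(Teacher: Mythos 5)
Your proposal is correct and follows exactly the route the paper intends: Proposition~\ref{prop:inc-tree} is stated there as an immediate corollary of Proposition~\ref{prop:inc-tree-BST}$(ii)$ via the rotation correspondence, using $|u|=|\Psi(u)|_\ell+1$ and $|u_1\wedge u_2|=|\Psi(u_1)\wedge\Psi(u_2)|_\ell$, with the root/size mismatch and the $+1$ shift being asymptotically negligible just as you argue.
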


\begin{rem} The results presented in Propositions \ref{prop:inc-tree-BST}  and \ref{prop:inc-tree} are partially known. The convergence of $|U_n \wedge V_n|$ in the RRT case and binary cases are proved in Kuba \& Wagner \cite{KubaW10}. The asymptotic normal distribution for the depth of a uniform node, are due to Dobrow~\cite{Dobrow96} for the RRT, and to Mahmoud \& Pittel \cite{Mah} for the BST. 

In the propositions stated above, 
we prove joint convergence in distribution, 
which is stronger that the marginal convergence already proved in the literature.

Note that stronger results are known about the profile of these trees (which encodes the number of nodes at each level): see Chauvin \& al. \cite{CDJH,chauvin2005} for the BST and Fuchs \& al. \cite{Fuchs2006} for partial results about the profile of the RRT. But these results do not imply the above propositions.
\end{rem}

\begin{proof}[Proof of Proposition \ref{prop:inc-tree-BST}] 
$(i)$ is a consequence of the third marginal convergence - a result due to Kuba \& Wagner \cite[Theorem 7]{KubaW10} - and of the fact that 
\ben\label{eq:Mah}
\frac{|U_n|-2\log n}{\sqrt{2\log n}} \dd \mathcal N(0,1),
\een 
a result due to Mahmoud \& Pittel \cite{Mah} (see also Devroye \cite{Devroye}). 
To see this, proceed as follows. 
We work with the enriched random binary search tree, which has, in terms of depth of random nodes, the same properties as the random binary search tree. By Lemma \ref{lem:rep}, the vector of the sizes of the subtrees rooted at a depth smaller than $k$ (sorted according to their root's lexicographical order) converges almost surely on the enriched space to a limit which has no entries equal to 0:
\[\l(n^{-1}|\ov{\BST}_n^{(u)}|, (u, |u|\leq k)\r)\as \l( \prod_{z \preceq u} W_{z},  (u, |u|\leq k)\r).\]
On this enriched space, the probability that $U_n\wedge V_n =u$, 
where $u$ is any given word of length $k$ converges to 
\[p_u:=2\l(\prod_{z \preceq u0} W_z\r)\l(\prod_{z \preceq u1} W_z\r)\]
since these terms are the asymptotic proportions of nodes in the subtrees rooted at $u0$ and $u1$.
We thus get that
\ben\label{eq:cocns}
\l(n^{-1}|\ov{\BST}_n^{(U_n\wedge V_n)0}|,n^{-1}|\ov{\BST}_n^{(U_n\wedge V_n)1}|, U_n\wedge V_n\r)\dd (\alpha, \beta, W)
\een for some random variables $\alpha, \beta$ and $W$. 
Moreover $\alpha$ and $\beta$ are almost surely positive.

It remains to describe $(U_n,V_n)$ conditionally to the event 
$\mathcal E:= \{(|\ov{\BST_n}^{(U_n\wedge V_n)0}|,|\ov{\BST_n}^{(U_n\wedge V_n)1}|, U_n\wedge V_n)=(s_1,s_2,W)\}$. 
Conditionally on $\mathcal E$:
\begin{itemize}
\item $\ov{B_1}=\pi(\BST_n^{(U_n\wedge V_n)0})$ and $\ov{B_2}=\pi(\BST_n^{(U_n\wedge V_n)1})$ are independent and are distributed respectively as~$\BST_{s_1}$ and~$\BST_{s_2}$;
\item $U_n$ (resp. $V_n$) is a node taken uniformly at random in $\ov{B}_1$ (resp. $\ov B_2$).
\end{itemize}
Hence, conditionally on $\mathcal E$,
\ben\label{eq:rep}
(|U_n|,|V_n|,U_n\wedge V_n)\eqd \l(|W|+|U^{s_1}|, |W|+|U^{s_2}|,W\r)
\een
where $|U^{s_1}|$ is independent of $|U^{s_2}|$, and $U^s$ is a uniform node in $\BST_s$. 
Now, we can conclude the proof of $(i)$: by Skorokhod representation theorem, the weak convergence stated in \eref{eq:cocns} holds a.s.\ on a certain probability space. By the representation given in \eref{eq:rep} and by \eref{eq:Mah}, it follows that on this space
\ben\label{eq:cogn}
\l(\frac{(|U_n|-|U_n\wedge V_n|) -  2\log (\alpha n)}{\sqrt{\log(\alpha n)}},\frac{(|V_n|-|U_n\wedge V_n|)-2\log (\beta n)}{\sqrt{\log(\beta n)}}, |U_n\wedge V_n|\r) \to (\Lambda_1,\Lambda_2,|W|)  \een
where the three random variables are independent,  $|W|\eqd K_{1/3}$,  $\Lambda_1$ and $\Lambda_2$ are $\mathcal N(0,1)$-distributed. From here, one sees that since $\alpha$ and $\beta$ are almost surely positive, Equation~\eref{eq:cogn} implies Equation~\eref{eq:sefdf}.

$(ii)$ This assertion is in fact a consequence of the first one and of Lemma~\ref{eq:grg}$(ii)$. 
Conditionally on $|U_n\wedge V_n|=k$, since $U_n\wedge W_n$ is by symmetry uniform among the words with $k$ letters on the alphabet $\{0,1\}$, 
$|U_n \wedge V_n|_\ell$ is binomial$(k,1/2)$. It is easy from there to recover that, since $|U_n\wedge V_n|$ is geometric of parameter $1/3$, $|U_n\wedge V_n|_\ell$ is geometric of parameter~$1/2$. It now remains to adapt the rest of the previous proof. Following the steps of the proof of $(i)$, one sees that $|U^{s_1}|_\ell$ and $|U^{s_2}|_\ell$ are independent, and by Lemma~\ref{eq:grg}$(ii)$, $|U^s|_\ell$ is, conditionally to $|U^s|$, binomial $(|U^s|,1/2)$. 
The fact that $(i)$ implies $(ii)$ is a consequence of the following general statement (easy to prove, e.g.\ using the central limit theorem and the Skorokhod representation theorem for $X_n$):

Assume that $(X_n,Y_n)$ is a sequence of random variables, such that:
\begin{enumerate}[(a)]
\item the random variables $(X_n, n\geq 0)$ are almost surely non-negative,
\item the distribution of $Y_n$ conditionally to $X_n$ is a binomial of parameter $(X_n,1/2)$,
\item $(X_n-a_n)/\sqrt{a_n} \dd {\cal N}(0,1)$ (for some diverging sequence $(a_n, n\geq 0)$).
\end{enumerate}
Then $(Y_n-a_n/2)/\sqrt{a_n/2}\dd {\cal N}(0,1)$, when $n$ goes to infinity.
\end{proof}

\section{Proofs of Theorem \ref{th:main}}
\label{sec:main}

\subsection{Preliminary lemma}
\begin{lem}\label{lem:indep} Let $(\nu_n, n\geq 0)$ be a sequence of random probability measures with total mass 1. 
For any integer~$n$, take $(A_n,B_n)$ two independent random variables with common distribution $\nu_n$. 
If
\ben \label{eq:pair}
(A_n,B_n)\dd(A,B)
\een where $(A,B)$ are two independent random variables with a deterministic distribution $\nu$ then $\nu_n\dd \nu$ for the topology of weak convergence in ${\cal M}(\PP)$\footnote{Given a sequence of random variables $(X_n)_{n\geq 0}$ and a random variable $X$, we say that $X_n\dd X$ if the distribution of $X_n$ converges weakly to the distribution of $X$.}.
\end{lem}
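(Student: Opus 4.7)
The plan is to prove tightness of the random measures $(\nu_n)_{n\geq 0}$ viewed as random elements of $\mathcal{M}_1(\PP)$ (probability measures on $\PP$ with the weak topology, which is itself Polish since $\PP$ is), and then to identify each subsequential limit as the constant $\nu$. Since the limit is deterministic, this will give the desired convergence in distribution, which coincides with convergence in probability.

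First I would deal with \emph{tightness}. The law of $A_n$ is exactly $\mathbb E[\nu_n]$, so the convergence $A_n\dd A$ combined with Prokhorov's theorem says that the sequence of mean measures $(\mathbb E[\nu_n])_{n\geq 0}$ is tight on $\PP$. For any $\varepsilon>0$, this lets me pick, for each integer $k\geq 1$, a compact set $K_k\subset\PP$ with $\mathbb E[\nu_n(K_k^c)]\leq \varepsilon/(k\,2^k)$ uniformly in $n$. Markov's inequality then gives $\mathbb P(\nu_n(K_k^c)>1/k)\leq \varepsilon/2^k$, so the set $\mathcal K_\varepsilon:=\{\mu\in\mathcal M_1(\PP):\ \mu(K_k^c)\leq 1/k\ \forall k\}$ is tight (hence relatively compact in $\mathcal M_1(\PP)$ by Prokhorov) and satisfies $\mathbb P(\nu_n\notin \mathcal K_\varepsilon)\leq \varepsilon$ for all $n$. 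This proves tightness of $(\nu_n)_{n\geq 0}$ in $\mathcal M_1(\PP)$.

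Next I would \emph{identify subsequential limits}. Let $\nu_\infty$ be any weak limit point, along some subsequence, of the random measures $\nu_n$. For any $f,g\in C_b(\PP)$, the map $\mu\mapsto \mu(f)\mu(g)$ is bounded and continuous on $\mathcal M_1(\PP)$, so
\[
\mathbb E[f(A_n)g(B_n)] \;=\; \mathbb E[\nu_n(f)\nu_n(g)] \;\longrightarrow\; \mathbb E[\nu_\infty(f)\nu_\infty(g)]
\]
along the subsequence. On the other hand, the hypothesis $(A_n,B_n)\dd (A,B)$ with $A,B$ independent of law $\nu$ gives $\mathbb E[f(A_n)g(B_n)]\to \nu(f)\nu(g)$. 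Choosing $f=g$ yields $\mathbb E[\nu_\infty(f)^2]=\nu(f)^2$; combined with $\mathbb E[\nu_\infty(f)]=\nu(f)$ (from the one-marginal convergence $A_n\dd A$), this forces $\mathrm{Var}(\nu_\infty(f))=0$, i.e.\ $\nu_\infty(f)=\nu(f)$ a.s.

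The last step is to upgrade this to $\nu_\infty=\nu$ a.s.\ (the main subtlety of the argument): since $\PP$ is Polish, there exists a \emph{countable} convergence-determining family $(f_k)_{k\geq 1}\subset C_b(\PP)$, and applying the previous identity to each $f_k$ simultaneously gives $\nu_\infty(f_k)=\nu(f_k)$ a.s.\ for all $k$, hence $\nu_\infty=\nu$ a.s. Tightness together with uniqueness of the subsequential limit implies $\nu_n\dd \nu$ in $\mathcal M_1(\PP)$. The only genuine obstacle in this plan is the existence of a countable convergence-determining family in $C_b(\PP)$ for a general Polish space, which is a standard but important fact (one can take for instance the Lipschitz functions bounded by $1$ associated to a countable dense subset of $\PP$ together with appropriate truncations).
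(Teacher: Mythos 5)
Your proposal is correct, but it follows a genuinely different route from the paper. The paper's proof fixes one bounded continuous test function $\Phi$ at a time and shows that the scalar random variable $\int\Phi\,d\nu_n$ converges to the constant $\int\Phi\,d\nu$ by a first/second moment argument: $\E\big[\int\Phi\,d\nu_n\big]=\E[\Phi(A_n)]\to\int\Phi\,d\nu$ from the marginal convergence, and $\Var\big(\int\Phi\,d\nu_n\big)={\sf Cov}(\Phi(A_n),\Phi(B_n))\to 0$ from the joint convergence to an independent pair; it then invokes the (folklore, unproved there) criterion that convergence of $\int\Phi\,d\nu_n$ for every $\Phi\in C_b(\PP)$ to a \emph{deterministic} limit yields convergence of the random measures. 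You instead work at the level of random elements of $\mathcal M_1(\PP)$: you establish tightness of the laws of $\nu_n$ from tightness of the mean measures (which are the laws of $A_n$) via Prokhorov and Markov, then identify every subsequential limit $\nu_\infty$ by passing to the limit in $\E[\nu_n(f)\nu_n(g)]=\E[f(A_n)g(B_n)]$, getting $\Var(\nu_\infty(f))=0$ and $\nu_\infty(f)=\nu(f)$ a.s., and finally upgrade to $\nu_\infty=\nu$ a.s.\ with a countable convergence-determining family. The key second-moment identity ($\E[\nu_n(f)\nu_n(g)]=\E[f(A_n)g(B_n)]$, a consequence of conditional independence of $A_n,B_n$ given $\nu_n$) is common to both arguments. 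What your version buys is self-containedness: the tightness-plus-identification scheme effectively proves the criterion the paper takes for granted, at the price of invoking Prokhorov on $\mathcal M_1(\PP)$ and the existence of a countable determining class (both available since $\PP$ is Polish); the paper's version is shorter and avoids any compactness considerations. Both proofs are valid.
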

\begin{proof} For the sake of completeness we give a proof of this lemma although it is folklore. 
The weak convergence in distribution of a sequence of random measures $(\nu_n)_{n\geq 0}$ on $\PP$ 
to  $\nu$ is equivalent to the convergence 
\ben\label{eq:cvloi}
\int \Phi\, d\nu_n \dd \int \Phi\, d\nu
\een
for any bounded continuous function $\Phi:\PP\to \mathbb{R}$. 
{Since its right term is deterministic, Equation~\eqref{eq:cvloi} follows from}
\ben
\mathbb{E}\l(\int \Phi\,d\nu_n\r)\to \int \Phi\,d\nu~~\textrm{ and }~~ \Var\l(\int \Phi\, d\nu_n\r) \to 0.
\een
The first convergence can be restated under the form $\E(\Phi(A_n))\to \E(\Phi(A))$ which is a consequence of the convergence of the first marginal in \eref{eq:pair}.  Now, 
\ben
\Var\l(\int \Phi \,d\nu_n\r)=\E\l[\bigg(\int \Phi \,d\nu_n\bigg)^2\r]-\E\bigg[\int \Phi \,d \nu_n\bigg]^2
                      = {\sf Cov}(\Phi(A_n),\Phi(B_n)),
\een
and since $\Phi$ is bounded and continuous, \eref{eq:pair} implies that ${\sf Cov}(\Phi(A_n),\Phi(B_n))\dd 0$, which concludes the proof.
\end{proof}

We prove Theorem~\ref{th:main} in two steps, separated in two subsections: we first assume that the initial composition measure $\mathcal M_0$ has total mass one; and then show how the result can be generalised to any initial composition measure.

\subsection{Proof of Theorem~\ref{th:main} when ${\cal M}_0(\PP)=1$}
For any $ n \geq 1$, set $\mu_n:= n^{-1} \Theta_{a(\log(n)),\,b(\log(n))}\l(\MM_n\r)$. 
The sequence $(\mu_n, n\geq 0)$ is a sequence of random probability measures, since each $\mu_n$ has total mass~1 as we have assumed ${\cal M}_0(\PP)=1$. 
%It is then possible to consider some random variables having $\mu_n$ as probability distribution.

In order to apply Lemma~\ref{lem:indep}, we take $U_n$ and $V_n$ two nodes taken independently and uniformly at random in the random recursive tree $\RRT_n$ 
and denote by $\ov{A_n}$ and $\ov{B_n}$ two independent random variables 
of respective distributions $\mathcal R_{X_{U_n}}$ and $\mathcal R_{X_{V_n}}$.
In view of Lemma \ref{lem:indep} and Theorem \ref{thm:rr}$(ii)$, to prove Theorem~\ref{th:main}, 
it suffices to prove that 
\[(A_n,B_n):= \l(\frac{\ov{A_n}-b(\log n)}{a(\log n)}, \frac{\ov{B_n}-b(\log n)}{a(\log n)}\r)\]
converges in distribution towards a pair of independent random variables 
with common distribution that of $\Gamma  g(\Lambda) + f(\Lambda)$ where $\Gamma$ and $\Lambda$ are independent, 
$\Lambda$ is a standard Gaussian random variable, and $\Gamma$ is $\gamma$-distributed.
{This would indeed imply that $\Theta_{a(\log n),\,b(\log n)}(n^{-1}\mathcal M_n)$ converges in distribution to the deterministic measure $\nu$, which, in turn, implies convergence in probability of $\Theta_{a(\log n),\,b(\log n)}(n^{-1}\mathcal M_n)$ to~$\nu$.}

Conditionally on $K_n=U_n \wedge V_n$, the BMC structure implies that
\[(\ov{A_n},\ov{B_n})\eqd (W_{|K_n|},W_{|K_n|}) + (W^{(1)}_{1+|U_n|-|K_n|}-W_{|K_n|}, W^{(2)}_{1+|V_n|-|K_n|}-W_{|K_n|})\]
where $W$ is a Markov chain of kernel $K$ of initial distribution $\mathcal M_0$, 
and $W^{(1)}$ and $W^{(2)}$ are two independent Markov chains of Kernel $K$ and of initial distribution~$\delta_{W_{|K_n|}}$.
By the Skorokhod representation theorem, one can work on a probability space on which the convergence stated in Proposition \ref{prop:inc-tree} is almost sure. On this space  
\be
|U_n|+1 &=& \log n + \Lambda_1 \sqrt{\log n} +\varepsilon_1(n)\\
|V_n|+1 &=& \log n + \Lambda_2 \sqrt{\log n} +\varepsilon_2(n)\\ 
|K_n| & \as & G
\ee
where $\varepsilon_1(n)$ and $\varepsilon_2(n)$ are two random error terms, almost surely negligible with respect to $\sqrt{\log n}$, 
$\Lambda_1$ and $\Lambda_2$ are two independent standard Gaussian random variables, 
and $G$ is a finite (geometric) random variable. 
Notice that this last convergence implies that~$|K_n|$ is eventually constant equal to~$G$ 
for every~$n$ greater than some (random) integer~$n_0$. 
For $n\geq n_0$, we have
\[(\ov{A_n},\ov{B_n})\eqd (W^{(1)}_{1+|U_n|-G}, W^{(2)}_{1+|V_n|-G})\]
where $W^{(1)}$ and $W^{(2)}$ are two independent Markov chains starting at a position $W_G$. 
Since $G$ is fixed, the starting position of $W^{(1)}$ and $W^{(2)}$ is now fixed, 
and the ergodicity hypothesis applies.
In fact, since $W^{(1)}$ and $W^{(2)}$ are independent, 
it suffices to find the limit of $(W^{(1)}_{1+|U_n|-G}-b(\log n))/a(\log n)$ and to observe that this limit is independent from $G$. 
To see this, one may, for example, condition on the value of $W_G$, and assume in the sequel that it is fixed.
Write 
\ben\label{eq:repr}
\frac{W^{(1)}_{{|U_n|+1-G}} - b(\log n)}{a(\log n)} = \frac{W^{(1)}_{{|U_n|+1-G}}-b_{{|U_n|+1-G}}}{a_{{|U_n|+1-G}}} \frac{a_{{|U_n|+1-G}}}{a(\log n)}+ \frac{b_{{|U_n|+1-G}}-b(\log n)}{a(\log n)}\ .
\een
Since $G=o(\sqrt{\log n})$, by assumption $(b)$ of the theorem, we have
\[\frac{b_{{|U_n|+1-G}}-b(\log n)}{a(\log n)} \to f(\Lambda_1)\quad\text{ and }\quad
\frac{a_{{|U_n|+1-G}}}{a(\log n)} \to g(\Lambda_1),\]
where $\Lambda_1$ is independent of~$G$.
By assumption~$(a)$, 
\[\frac{W^{(1)}_{{|U_n|+1-G}}-b_{{|U_n|+1-G}}}{a_{{|U_n|+1-G}}} \dd \Gamma_1,\] 
where $\Gamma_1$ is independent of~$G$ and $\gamma$-distributed.
In conclusion, 
\[\frac{W^{(1)}_{|U_n|+1}-b(\log n)}{a(\log n)} \dd \Gamma_1 g(\Lambda_1)+f(\Lambda_1),\] 
and this variable is independent of~$G$. 
This concludes the proof of Theorem~\ref{th:main} under the assumption that~$\mathcal M_0(\PP) = 1$.

\subsection{Proof of Theorem~\ref{th:main} for general ${\cal M}_0(\PP)$}
\label{sec:generalM0}

To conclude the proof of Theorem~\ref{th:main}, we need to discuss the case when $\mathcal M_0(\PP)\neq 1$.

Assume first that ${\cal M}_0(\PP)=m$ is an integer. In this case, the idea consists in splitting the initial measure into $m$ parts $(\mathcal M_0^{(i)},1\leq i \leq m)$ (that is such that ${\cal M}_0=\sum_{i=1}^{m}{\cal M}_0^{(i)}$ and ${\cal M}_0^{(i)}(\PP)=1$),
each of them having total mass~1. Sampling according to $\mathcal M_0$ is thus equivalent to first choosing a uniform value $i$ in $\{1,\ldots,m\}$ and then sampling according to $\mathcal M_0^{(i)}$.

Consider the forest built as follows: at time zero, the forest is composed of $m$ trees reduced to their roots.
At every discrete time step, one draws a node uniformly at random in the forest, and add a child to this node.
Note that, conditioned on their sizes $(s_1^{(n)}, \ldots, s_m^{(n)})$, each of the $m$ trees of the forest are independent random recursive trees, and then, to get Theorem \ref{th:main} in this setting it suffices to show that the asymptotic sizes of these trees are linear (since this holds for any starting distributions ${\cal M}_0^{(i)}$).
Note that the vector $(s_1^{(n)}, \ldots, s_m^{(n)})$ is the composition vector of a $m$-colour urn process of initial composition vector $(1, \ldots, 1)$ and replacement matrix $\mathtt{Id}_m$. It is known that
\begin{lem}[{see for example~\cite{JK97}}]\label{lem:Dirichlet}
\ben\label{eq:conv}
\l(s_j^{(n)}/{n},1\leq j \leq m\r)\as \l(s_j,1\leq j \leq m\r)\een
and the limit follows the Dirichlet$(1,\ldots,1)$ distribution, implying in particular that $s_j>0$ almost surely for all $1\leq j \leq m$.
\end{lem}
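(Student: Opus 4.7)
The plan is to observe that $(s_j^{(n)})_{1\le j\le m}$ is precisely the composition vector of the balanced P\'olya urn with initial vector $(1,\ldots,1)$ and replacement matrix $\mathtt{Id}_m$, for which the total mass after $n$ steps is deterministically $n+m$. Rather than invoking \cite{JK97} as a black box, I would give a short self-contained proof in two stages: almost sure convergence via a martingale argument, followed by identification of the limit law through a moment computation.

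First I would verify that, for each fixed $j$, the normalized coordinate $X_n^{(j)}:=s_j^{(n)}/(n+m)$ is a bounded martingale for the natural filtration $\mathcal F_n=\sigma(s^{(0)},\ldots,s^{(n)})$. Indeed, $s_j^{(n+1)}=s_j^{(n)}+1$ with probability $s_j^{(n)}/(n+m)$ and is unchanged otherwise, so $\E[s_j^{(n+1)}\mid \mathcal F_n] = s_j^{(n)}(n+m+1)/(n+m)$, which yields the martingale property. Doob's theorem will then give almost sure (and $L^p$) convergence of each $X_n^{(j)}$, hence of $s_j^{(n)}/n$ (which differs only by the factor $n/(n+m)\to 1$), to some limit $s_j\in[0,1]$. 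The convergence is joint on the simplex $\{\sum_j x_j=1\}$ since $\sum_j X_n^{(j)}\equiv 1$ for every $n$.

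Second, I would identify the joint law of $(s_1,\ldots,s_m)$ as Dirichlet$(1,\ldots,1)$ by the moment method. Exploiting the exchangeability of the sequence of colors drawn from the P\'olya urn, one checks by direct counting that for any $k_1,\ldots,k_m\in\N$ with $k=\sum k_j$,
\[
\E\!\left[\prod_{j=1}^m \frac{s_j^{(n)}(s_j^{(n)}+1)\cdots(s_j^{(n)}+k_j-1)}{(n+m)(n+m+1)\cdots(n+m+k-1)}\right]=\frac{k_1!\cdots k_m!\,(m-1)!}{(k+m-1)!},
\]
and the right-hand side is exactly the $(k_1,\ldots,k_m)$-moment of Dirichlet$(1,\ldots,1)$. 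Passing to the limit $n\to\infty$ using dominated convergence (both sides are bounded), matching moments on the compact simplex, and recalling that Dirichlet$(1,\ldots,1)$ is the uniform law on the open simplex (so $s_j>0$ a.s.) will complete the proof. An equivalent route would be to apply de Finetti's theorem to the exchangeable color sequence and identify the directing measure.

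The step I expect to require the most care is the exact bookkeeping in the joint moment identity above — ensuring it is verified jointly in $(k_1,\ldots,k_m)$ rather than marginally, and that the almost sure convergence of the individual $X_n^{(j)}$ indeed yields joint convergence of the law to the Dirichlet distribution (rather than a mere marginal statement). Both are standard but are the only non-trivial ingredients; everything else reduces to the definition of the P\'olya urn and an application of Doob's theorem.
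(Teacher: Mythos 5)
Your proof is correct, and it fills in what the paper simply outsources: the paper gives no argument for this lemma, citing \cite{JK97}, whereas you supply the standard self-contained proof (Doob martingale for each normalised coordinate, then identification of the limit law by moments via exchangeability). Two small remarks that only strengthen your argument. First, the martingale computation is right: with initial mass $m$ and one ball added per step the total mass at time $n$ is $n+m$, and $\E[s_j^{(n+1)}\mid\mathcal F_n]=s_j^{(n)}(n+m+1)/(n+m)$ gives that $X_n^{(j)}=s_j^{(n)}/(n+m)$ is a $[0,1]$-valued martingale, so a.s. convergence of each coordinate — hence of the whole vector — follows at once; the ``joint versus marginal'' worry only concerns the identification of the limit law, which your joint moment computation handles. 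Second, your moment identity is in fact exact for every fixed $n$, not merely asymptotically: the random quantity inside the expectation is the conditional probability, given $\mathcal F_n$, that the next $k$ draws follow one prescribed colour pattern with $k_j$ draws of colour $j$, and by exchangeability of the colour sequence this has the same (deterministic) expectation $k_1!\cdots k_m!\,(m-1)!/(k+m-1)!$ as for the first $k$ draws; bounded convergence then yields $\E\bigl[\prod_j s_j^{k_j}\bigr]=k_1!\cdots k_m!\,(m-1)!/(k+m-1)!$, which are the Dirichlet$(1,\ldots,1)$ moments, and since the limit is supported on the compact simplex the moments determine the law. The positivity of the $s_j$ then follows as you say from the uniform law on the simplex. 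So the proposal is a complete and correct proof of the lemma, by essentially the same classical route as the reference the paper invokes.
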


If $m$ is not an integer we can again couple the MVPP with a BMC on a random forest composed of $\lfloor m \rfloor+1$ trees. The random forest is built as follows: at time zero, it is composed of the roots of $\lfloor m\rfloor+1$ trees, the first $\lfloor m\rfloor$ have weight~1 and the last has weight $\{m\}:=m-\lfloor m\rfloor$. At each discrete time step, one picks a node at random in the forest with probability proportional to its weight, and adds a child of weight~1 to this randomly chosen node. 
Note that the $\lfloor m\rfloor$ first trees, conditioned on their size, are random recursive trees, and the last one has a slightly different distribution: we weight its root by $\{m\}:=m-\lfloor m\rfloor$ and each other of its nodes by~1. 

Again, we can conclude if we can prove that under these dynamics the tree sizes are asymptotically linear (see also Remark \ref{rem:tj} below).

Note that the sizes of the first $\lfloor m\rfloor$ trees of the forest have asymptotically a linear size in $n$. This can be seen by comparison with the case when the initial mass is $\lfloor m\rfloor+1$. 
In fact, the last tree also has asymptotic linear size: let us denote by $T$ the first time (in the construction of the random forest) 
that a child is added to the root of the last tree. Note that $T$ is almost surely finite (since at time $n$, the probability is $\{m\}/(n+\{m\})$). 
At time $T$, the first $\lfloor m\rfloor$ trees of the forest contain $\lfloor m\rfloor+T$ nodes (all of weight one), and the last tree contains one node of weight one, which we denote by $\nu$ (plus the root of weight $\{m\}$). Thus, the size of the last subtree is larger than the size of the subtree rooted at $\nu$, which we denote by $s_{\nu}(n)$. 
Again, by Lemma~\ref{lem:Dirichlet}, conditionally on~$T$, $n^{-1}{s_\nu(n)}$ converges almost surely to a Beta-distributed random variable $b$ of parameter $(1, T)$, which implies that $s^{(n)}_{m+1}$, the size of the last subtree, satisfies a.s.
\[b \leq \liminf_n s^{(n)}_{m+1}/n \leq \liminf_n s^{(n)}_{m+1}/n \leq 1.\]
\begin{rem}\label{rem:tj}
Given its size, the last subtree is not distributed as a random recursive tree because of the weight of the root.
Luckily, the subtrees of the root, given their sizes are distributed as random recursive trees.
Moreover, if we compare with the case when the initial mass is~1, 
the subtrees of the root are less numerous and larger than in the random recursive tree case.
\end{rem}

\section{Proof of Theorem~\ref{theo:BRW}}
\label{sec:theo:BRW}

We first prove the result in dimension $d=1$.

\subsection{One-dimensional case}

Denote by  $(X_1, \ldots, X_k)$ the first $k$ values 
of the branching Markov chain at the 1st, 2nd, 3rd... nodes, in their order of appearance in the tree. Consider the map $Z_n:\C\to \C$ defined by
\ben
Z_n(x)=\prod_{j=1}^n \l(\frac{j-1}{j}+\frac{x}{j}\r).
\een
Notice that $Z_n(1)=1$. For all $\theta\in\mathbb C$, set
\ben\label{eq:Fnt}
F_n(\theta)= Z_n(e^{-im\theta})\sum_{k=1}^{n+1} \frac{e^{i\theta X_k}}{n+1}
\een
a rescaled version 
of the empirical Fourier transform of the random probability measure 
\ben
\rho_{n+1}:=\sum_{k=1}^{n+1} \frac{\delta_{X_k}}{n+1}.
\een
Notice that 
\ben
\frac1{n+1}\mathcal M_{n+1}= \int R_{x}(\,\cdot\,) d\rho_{n+1}(x). 
\een
Hence ${\cal M}_{n+1}$ is the distribution of $X_{U_n}+\Delta$ where $U_n$ is uniform in $\{1, \ldots, n+1\}$. 
Now, for any sequences $(a(n), n\geq 0)$ and $(b(n), n\geq 0)$ such that $a(n)\to +\infty$, and any distribution $\rho$, we have that
\ben\label{eq:sgegz}
\Theta_{a(n),b(n)}(\rho_n)\as \rho \quad\text{implies}\quad \Theta_{a(n),b(n)}\l(n^{-1}{\cal M}_n\r)\as \rho.
\een
It is thus enough to prove that $\Theta_{a(n),b(n)}(\rho_n)\as \rho$.
Let 
\[T_n(\theta)= \frac{ F_n(\theta)}{\E(F_n(\theta))}\]
its renormalised version (note that $F_0(\theta)=T_0(\theta)=1$). The case $m=0$ corresponds to the case $Z_n(1)=1$. 
In view of the dynamics of the MVPP, we have the following recursion: for all $n\geq 1$,
\ben\label{eq:FE}
\E(F_n(\theta) \mid {\cal F}_{n-1})= \frac{Z_n(e^{-im\theta })}{Z_{n-1}(e^{-im\theta })}\frac{nF_{n-1}(\theta) \Big(1+\frac{\Phi(\theta)}{n}\Big)}{n+1},
\een
where $\Phi(\theta)=\E(e^{i\theta \Delta})$ is the Fourier transform of $\Delta$.
We have assumed that $\Delta$ has exponential moments, and more precisely that there exists $r_1>0$ such that $S_{r_1} = \sup_{\theta \in [-r_1,r_1]} \l|\E(\exp(\theta \Delta))-1\r|<+\infty$. Let
\ben
D_{r_1}:=\{w \in \C, |\Im(w)|\leq r_1\}
\een
be the horizontal band centred around the $x$-axis, of width~$2r_1$. 
We have
\ben \label{eq:bound}
\sup_{z \in D_{r_1}} |\E(\exp(iz \Delta))|& \leq & \sup_{z \in D_{r_1}}|\E(\exp(i\Re(z)\Delta-\Im(z) \Delta))|\\
                   & \leq & \sup_{z \in D_{r_1}} \E(\exp(-\Im(z) \Delta)) \leq 1+S_{r_1}.
\een
From here, we infer that $\Phi$ is holomorphic on $D_{r_1/2}$.

From Equation~\eref{eq:FE} we get that, for all $n\geq 1$,
\ben\label{eq:Fn}
\E\l(F_{n}(\theta)\r)= \frac{Z_n(e^{-im\theta})}{n+1}\dis\prod_{j=1}^n \l(1+\frac{\Phi(\theta)}{j}\r)=\frac{1}{n+1}Z_n(e^{-i m\theta})\, Z_n(\Phi(\theta)+1),
\een
implying that $\theta\mapsto \E(F_n(\theta))$ is holomorphic on $D_{r_1/2}$. By the first statement of \eref{eq:ine2}, we deduce
\begin{lem} There exists $r_2\in(0,r_1/2)$, such that for any $z\in D_{r_2}$, for any $n \geq 1$,  $\mathbb E(F_n(z))$ is non-null. Hence, for  any $z\in D_{r_2}$,  $(T_n(z))_{n\geq 0}$ is a martingale. 
\end{lem}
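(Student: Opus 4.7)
The approach rests entirely on the explicit product formula \eqref{eq:Fn}, which reads $\mathbb{E}(F_n(z)) = \frac{1}{n+1}\,Z_n(e^{-imz})\,Z_n(\Phi(z)+1)$. Thus non-vanishing of $\mathbb{E}(F_n(z))$ reduces to non-vanishing of its two deterministic factors, and the martingale claim then follows by a direct check against the recursion \eqref{eq:FE}.

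First I would observe that $Z_n(x) = \prod_{j=1}^n (j-1+x)/j$ vanishes precisely when $x \in \{0,-1,\ldots,-(n-1)\}$, so to have $Z_n(x) \neq 0$ for all $n \geq 1$ simultaneously it suffices to keep $x$ in a region disjoint from $\mathbb{Z}_{\leq 0}$. For the factor $Z_n(\Phi(z)+1)$, the continuity of the Laplace transform recorded in \eqref{eq:ine2} is the key input: since $S_x \to 0$ as $x\to 0$, one can pick $r_2 \in (0,r_1/2)$ so small that $S_{r_2} < 1/2$, and then \eqref{eq:bound} gives $|\Phi(z)-1| < 1/2$ on $D_{r_2}$; hence $\Phi(z)+1$ lies in the open disc of center $2$ and radius $1/2$, which is disjoint from $\mathbb{Z}_{\leq 0}$. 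For the factor $Z_n(e^{-imz})$, by continuity of $z \mapsto e^{-imz}$ at $z=0$, a further shrinking of $r_2$ keeps $e^{-imz}$ in a neighborhood of $1$, also disjoint from $\mathbb{Z}_{\leq 0}$.

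For the martingale property, I would start from \eqref{eq:Fn} and compute
\[\frac{\mathbb{E}(F_n(z))}{\mathbb{E}(F_{n-1}(z))} = \frac{n}{n+1}\,\frac{Z_n(e^{-imz})}{Z_{n-1}(e^{-imz})}\,\frac{Z_n(\Phi(z)+1)}{Z_{n-1}(\Phi(z)+1)} = \frac{(n-1+e^{-imz})(n+\Phi(z))}{n(n+1)},\]
and notice that this is exactly the deterministic prefactor multiplying $F_{n-1}(z)$ on the right-hand side of \eqref{eq:FE}. Dividing the conditional-expectation identity \eqref{eq:FE} by $\mathbb{E}(F_n(z))$, which is non-null by the previous step, therefore yields $\mathbb{E}(T_n(z) \mid \mathcal{F}_{n-1}) = T_{n-1}(z)$, and the $L^1$ integrability needed to call $T_n(z)$ a martingale is clear since $F_n(z)$ is a bounded random variable for each fixed $n$ and $z$.

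The delicate part is controlling the factor $Z_n(e^{-imz})$: since $|e^{-imz}| = e^{m\Im(z)}$ and $\arg(e^{-imz}) = -m\Re(z)$, both the modulus and argument of $e^{-imz}$ can move away from those of $1$ as $z$ ranges over $D_{r_2}$, whereas the forbidden set $\mathbb{Z}_{\leq 0}$ is unbounded. The remedy is to choose $r_2$ small enough that $e^{-imz}$ remains within a fixed small neighborhood of $1$ throughout the relevant region; in contrast, the corresponding control on $\Phi(z)+1$ is a direct consequence of \eqref{eq:ine2}, so the bulk of the care concentrates on the first factor.
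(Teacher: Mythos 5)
Your overall skeleton is the intended one (the paper itself offers only a one-line appeal to \eref{eq:ine2}): factor $\mathbb E(F_n(z))$ via \eref{eq:Fn}, note that $Z_n(x)=0$ exactly for $x\in\{0,-1,\dots,-(n-1)\}$, and check the martingale property against \eref{eq:FE}; your ratio computation $\mathbb E(F_n(z))/\mathbb E(F_{n-1}(z))=(n-1+e^{-imz})(n+\Phi(z))/(n(n+1))$ is correct and does match the prefactor in \eref{eq:FE}. The non-vanishing step, however, has a genuine gap as written. First, \eref{eq:bound} bounds $|\Phi(z)|$ by $1+S_{r_1}$; it does not bound $|\Phi(z)-1|$, and $S_x$ only controls \emph{real} arguments of the Laplace transform, so ``$S_{r_2}<1/2$ hence $|\Phi(z)-1|<1/2$ on $D_{r_2}$'' is not a valid deduction. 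Second, and more seriously, $D_{r_2}$ as defined in the paper is the \emph{horizontal band} $\{w:|\Im(w)|\le r_2\}$, which contains the entire real axis however small $r_2$ is; shrinking $r_2$ constrains only $\Im(z)$, while $\arg(e^{-imz})=-m\Re(z)$ sweeps the whole circle as $\Re(z)$ runs over $\mathbb R$. So your proposed remedy for the factor $Z_n(e^{-imz})$ cannot work on the band: if $m\neq0$, the real point $z=\pi/m$ lies in every $D_{r_2}$ and gives $e^{-imz}=-1$, hence $Z_n(e^{-imz})=0$ for $n\ge2$; in lattice cases (e.g.\ $\Delta=1$, the RRT-profile application, at $z=\pi$) one even has $\Phi(z)+1=0$. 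In other words, on the band the claimed bounds are false, and the statement itself only holds on a bounded neighbourhood of the origin.

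The repair is to read the domain (as the paper implicitly does later, with the closed ball $B$, the rectangle $R$, and the evaluation at $z=\theta/\sqrt{\log n}\to0$) as a small closed disc or rectangle around $0$. There, continuity of $\Phi$ at $0$ (available since $\Phi$ is holomorphic on $D_{r_1/2}$, or by dominated convergence using \eref{eq:ine}) gives $|\Phi(z)-1|<1/2$, and continuity of $z\mapsto e^{-imz}$ keeps $e^{-imz}$ near $1$; then both arguments of $Z_n$ avoid $\mathbb Z_{\le0}$ uniformly in $n$ and your argument closes. One further small point: for non-real $z$, $F_n(z)$ is in general \emph{not} a bounded random variable when $\Delta$ is unbounded, since $|e^{izX_k}|=e^{-\Im(z)X_k}$; the integrability needed for the martingale claim should instead be drawn from the exponential-moment hypothesis \eref{eq:ine}, which gives $\mathbb E\bigl[e^{-\Im(z)X_k}\bigr]\le(1+S_{r_1})^{n}$ for $|\Im(z)|\le r_1$ and $k\le n+1$.
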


\vspace{\baselineskip}
{\bf The BST height profile martingale -- }
In~\cite{CDJH}, the authors study a martingale~$\big({W_n(z)}/{\mathbb E W_n(z)}\big)_{n\geq 0}$ defined as follows: for all $z\in\mathbb C$, $W_n(z):= \sum_{k\geq 0} U_n(k) z^k$ where $U_n(k)$ is the number of leaves at height $k$ in the $n+1$-leaf random binary search tree.
This martingale is different from ours, but we have (by~\cite[Lemma~2]{CDJH}) 
\ben
\E(W_n(z))=Z_n(2z).
\een
To prove Theorem~\ref{theo:BRW}, we use Joffe, Le Cam \& Neveu~\cite{JLCN}'s method, 
many specific details being similar to those developed by Chauvin \& al.~\cite{CDJH}.
First of all, by \cite[Lemma~3]{CDJH}, 
$\dis \l|\E\l(W_n\l(z\r)\r)-\frac{n^{2z-1}}{\Gamma(2z)}\r|={\mathcal O}(n^{2\Re(z)-2})$ uniformly on all compact sets of $\C$, when $n\to+\infty$, so that uniformly on all compact sets of $\C$,
\ben\label{eq:Zn}
\l|Z_n\l(x\r)-\frac{n^{x-1}}{\Gamma(x)}\r|={\mathcal O}(n^{\Re(x)-2}).
\een
Thus, in view of Equation~\eqref{eq:Fn}, we have
\begin{lem}\label{lem:esperance_F_n}
Asymptotically when $n$ goes to infinity, uniformly for $z\in D_{r_2}$,
\ben\label{eq:zt2}
\E[F_{n}(z)]=\frac{n^{e^{-im z}+\Phi(z)-2}}{\Gamma(e^{-imz})\Gamma(\Phi(z)+1)}\,(1+o(1)).
\een
\end{lem}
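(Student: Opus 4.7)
The plan is to substitute the closed-form expression from Equation~\eqref{eq:Fn} and apply the asymptotic estimate \eqref{eq:Zn} to each of the two $Z_n$ factors appearing there. Starting from
\[
\E[F_n(z)] = \frac{1}{n+1}\, Z_n(e^{-imz})\, Z_n(\Phi(z)+1),
\]
the goal is simply to replace each factor by its leading order from \eqref{eq:Zn} and keep track of relative errors.

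First I would check that the two arguments stay in a region where \eqref{eq:Zn} can be used effectively. For $z \in D_{r_2}$, we have $|e^{-imz}| = e^{m\,\Im(z)} \le e^{m r_2}$ and $|e^{-imz}|\ge e^{-m r_2}$, so $e^{-imz}$ lies in a fixed compact set of $\mathbb{C}\setminus\{0\}$, bounded away from the non-positive integers (hence from the poles of $\Gamma$) provided $r_2$ is small enough. For the other argument, Equation~\eqref{eq:bound} gives $|\Phi(z)|\le 1+S_{r_1}$, and by the continuity statement in \eqref{eq:ine2}, $\Phi(z)$ is close to $1$ when $z$ is small, so one can take $r_2$ small enough that $\Phi(z)+1$ also remains in a compact set avoiding the poles of $\Gamma$. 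On such a compact set, $\Gamma(e^{-imz})$ and $\Gamma(\Phi(z)+1)$ are bounded away from $0$ and $\infty$.

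Then I would apply \eqref{eq:Zn} in the multiplicative form
\[
Z_n(x) = \frac{n^{x-1}}{\Gamma(x)}\,\bigl(1 + O(n^{-1})\bigr),
\]
which is valid uniformly for $x$ in any compact subset of $\mathbb{C}\setminus\{0,-1,-2,\ldots\}$ by \eqref{eq:Zn} together with the boundedness of $\Gamma(x)$ established above. Specialising to $x=e^{-imz}$ and $x=\Phi(z)+1$, multiplying the two asymptotics, and using $\frac{1}{n+1} = n^{-1}(1+O(n^{-1}))$, yields
\[
\E[F_n(z)]
= \frac{1}{n+1}\cdot\frac{n^{e^{-imz}-1}}{\Gamma(e^{-imz})}\cdot\frac{n^{\Phi(z)}}{\Gamma(\Phi(z)+1)}\,(1+o(1))
= \frac{n^{e^{-imz}+\Phi(z)-2}}{\Gamma(e^{-imz})\Gamma(\Phi(z)+1)}\,(1+o(1)),
\]
which is exactly \eqref{eq:zt2}.

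The only real subtlety, and the point I would be most careful about, is the uniformity claim. Since $D_{r_2}$ is an unbounded strip, I read \eqref{eq:zt2} as being uniform on compact subsets of $D_{r_2}$, which is what \eqref{eq:Zn} delivers directly; the bounds on $e^{-imz}$ and $\Phi(z)+1$ above show that a compact $z$-region maps to a compact region in the arguments of $Z_n$, so the relative $O(n^{-1})$ error in each factor transfers to the product. No delicate estimate is needed — the lemma is essentially a bookkeeping consequence of \eqref{eq:Fn} and \eqref{eq:Zn}.
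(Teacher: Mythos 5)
Your route is the same as the paper's: plug the exact formula \eqref{eq:Fn} into the asymptotics \eqref{eq:Zn} for each $Z_n$-factor, and the algebra is fine. The gap is precisely in the step you flag as the only subtlety, the pole-avoidance needed to turn the additive bound \eqref{eq:Zn} into a multiplicative $(1+O(1/n))$. Making $r_2$ small only constrains $\Im z$, not $\Re z$: writing $z=a+ib\in D_{r_2}$, the point $e^{-imz}=e^{mb}e^{-ima}$ sweeps an entire circle of radius $e^{mb}$ as $a$ runs over $\R$, so whenever $m\neq 0$ it passes through $-1$ (take $z=\pi/m$, which is real, hence in $D_{r_2}$ and in many compact subsets of it). Likewise $\Phi(z)+1$ is only guaranteed close to $2$ for $z$ near the origin, not for $z$ in a thin strip; in the lattice case $\Delta\equiv 1$ (the paper's RRT application, $m=1$) one has simultaneously $e^{-im\pi}=-1$ and $\Phi(\pi)+1=0$. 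Near such points $|\Gamma|$ blows up, and your conversion of \eqref{eq:Zn} into a relative error — which costs a factor $|\Gamma(x)|$, since $|n^{x-1}/\Gamma(x)|=n^{\Re(x)-1}/|\Gamma(x)|$ — is not uniform. Reading the lemma as ``uniform on compact subsets of $D_{r_2}$'' does not repair this, because compact subsets of the strip can contain these bad points. What your argument actually establishes is uniformity on a sufficiently small ball around $0$, where $e^{-imz}$ is near $1$ and $\Phi(z)+1$ near $2$; as it happens, that is all the paper uses later (the lemma is applied at $z=\theta/\sqrt{\log n}$ and on a small rectangle around $0$ in Proposition~\ref{pro:cvm}), but it is not what you claim to prove.

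If you want the estimate uniformly on all of $D_{r_2}$, bypass the division issue by using the closed form $Z_n(x)=\Gamma(n+x)/\big(\Gamma(x)\Gamma(n+1)\big)$: the ratio $\Gamma(n+x)/\Gamma(n+1)=n^{x-1}\big(1+O(1/n)\big)$ uniformly for $x$ in bounded sets, so $Z_n(x)=\frac{n^{x-1}}{\Gamma(x)}\big(1+O(1/n)\big)$ with a relative error that does not degrade near the non-positive integers (there both sides vanish). Combined with \eqref{eq:bound}, which keeps $e^{-imz}$ and $\Phi(z)+1$ in a fixed bounded set for all $z\in D_{r_2}$, this gives \eqref{eq:zt2} uniformly on the strip. (The paper's own one-line proof only invokes boundedness of the arguments and is silent about the poles, so the difficulty you ran into is genuine; but as written your justification that the arguments stay away from the poles of $\Gamma$ is incorrect.)
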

\begin{proof}Since $\Delta$ has exponential moments, by \eref{eq:bound}, for 
$\{w=(1+\Phi(z))/2, z\in D_{r_2}\}$ is bounded, and using that $e^{-imz H_{n+1}}\sim n^{-imz}$. 
\end{proof}
We state the strong convergence of the renormalised random Fourier transform~$T_n$:
\begin{prop}\label{prop:cvps_T_n}
For any $\theta\in\mathbb R$,
\[T_n\l(\frac{\theta}{\sqrt{\log n}}\r) \as 1.\]
\end{prop}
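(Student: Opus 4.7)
The plan is to apply the Joffe--Le Cam--Neveu method, essentially in the form carried out by Chauvin et al.~\cite{CDJH}: establish $L^2$-boundedness of the complex martingale $(T_n(z))_{n\geq 0}$ uniformly on a neighbourhood of $0$ in $\C$, upgrade almost-sure pointwise convergence into almost-sure uniform convergence on compact sets using holomorphy, and finally evaluate at the perturbed point $\theta/\sqrt{\log n}$.

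First I would derive the increment decomposition of $T_n(z)$. Writing $F_n(z)-\E(F_n(z)\mid\mathcal F_{n-1})$ using~\eref{eq:FE} yields, after dividing by $\E(F_n(z))$, an expression of the form
\[
T_n(z)-T_{n-1}(z)=\frac{Z_n(e^{-imz})}{(n+1)\,\E(F_n(z))}\,\Big(e^{iz X_{n+1}}-\Phi(z)\,\tfrac{1}{n}\sum_{k=1}^{n} e^{iz X_k}\Big),
\]
so that the conditional variance of the increment is of order $n^{-2}\,|\E(F_n(z))|^{-2}\,|Z_n(e^{-imz})|^2$. Using Lemma~\ref{lem:esperance_F_n} together with the uniform bound~\eref{eq:bound}, the ratio $|Z_n(e^{-imz})/\E(F_n(z))|$ behaves like $n^{1-\Re(\Phi(z))}$ up to a bounded multiplicative factor. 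Telescoping, one obtains
\[
\E\bigl[|T_n(z)|^2\bigr]\leq 1+C\sum_{j=1}^{n} j^{-2\Re(\Phi(z))}.
\]
Since $\Phi(0)=1$ and $\Phi$ is holomorphic on $D_{r_1/2}$, there exists $r_3\in(0,r_2)$ such that $2\Re(\Phi(z))>1+\delta$ for every $z\in D_{r_3}$ and some $\delta>0$; on this neighbourhood the series above is summable uniformly and $\sup_n\sup_{z\in D_{r_3}}\E[|T_n(z)|^2]<\infty$. This uniform $L^2$-bound is the crux of the argument; in my view the main obstacle is controlling the conditional second moment of the increment in a way that is uniform in $z$ on a genuine neighbourhood of $0$, which is why the exponential-moment assumption on $\Delta$ enters in an essential way via~\eref{eq:ine2}.

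Uniform $L^2$-boundedness then gives, by Doob's martingale convergence theorem, $T_n(z)\to T_\infty(z)$ almost surely and in $L^2$ for every fixed $z\in D_{r_3}$. Because $z\mapsto T_n(z)$ is holomorphic on $D_{r_2}$ and the family is uniformly bounded in $L^2$ on $D_{r_3}$, a standard Vitali-type argument (as used in~\cite{CDJH}) upgrades this to almost-sure uniform convergence on every compact subset of $D_{r_3}$; in particular, the limit $T_\infty$ is almost surely holomorphic there and the convergence $T_n\to T_\infty$ is continuous in $z$.

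Finally, since $F_n(0)=Z_n(1)\cdot 1=1$ and $\E(F_n(0))=1$, we have $T_n(0)=1$ identically, hence $T_\infty(0)=1$ almost surely. For any fixed $\theta\in\R$, the sequence $z_n:=\theta/\sqrt{\log n}$ converges to $0$ and lies, for $n$ large, in any prescribed neighbourhood of $0$ in $D_{r_3}$; by the uniform convergence on compacts,
\[
T_n\!\left(\frac{\theta}{\sqrt{\log n}}\right)=T_n(0)+\bigl(T_n(z_n)-T_n(0)\bigr)\longrightarrow T_\infty(0)=1\quad\text{a.s.},
\]
which is the claim.
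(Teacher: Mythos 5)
Your overall route is the same as the paper's: show that the complex martingale $(T_n(z))_{n\geq 0}$ is uniformly bounded in $L^2$ on a neighbourhood of $0$, upgrade to almost-sure uniform convergence of the (holomorphic, continuous) functions $T_n$ towards a continuous limit $T$, note $T_n(0)=1$ hence $T(0)=1$, and evaluate at $\theta/\sqrt{\log n}\to 0$. The increment decomposition and the final evaluation step are fine. The problem is the key $L^2$ estimate. You bound the conditional variance of the increment by the prefactor $\bigl|Z_n(e^{-imz})/\bigl((n+1)\E F_n(z)\bigr)\bigr|^2\asymp n^{-2\Re\Phi(z)}$ times an implicit $O(1)$, invoking \eref{eq:bound}. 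But \eref{eq:bound} controls $\E[e^{iz\Delta}]$, i.e.\ a single increment, not the terms $e^{izX_k}$: the $X_k$ are positions of a branching random walk at depth of order $\log n$, so for $\Im z\neq 0$ one has $|e^{izX_{\mathrm{new}}}|^2=e^{-2\Im(z)X_{\mathrm{new}}}$ and
\[
\E\bigl[|e^{izX_{\mathrm{new}}}|^2\,\big|\,\mathcal F_n\bigr]=\Phi(2i\Im z)\,\frac{\ovs{f_n}(2i\Im z)}{n+1},
\qquad
\E\Bigl[\frac{\ovs{f_n}(2i\Im z)}{n+1}\Bigr]\sim \frac{n^{\Phi(2i\Im z)-1}}{\Gamma(\Phi(2i\Im z)+1)}
\]
by Lemma~\ref{lem:esperance_F_n}. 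This factor is not bounded in $n$ (e.g.\ for $\Delta=1$ and $\Im z<0$ it grows polynomially). Hence the increment's second moment is of order $n^{-2\Re\Phi(z)+\Phi(2i\Im z)-1}$, not $n^{-2\Re\Phi(z)}$, and the correct summability condition is not $2\Re\Phi(z)>1$ but $\Phi(2i\Im z)<2\Re\Phi(z)$ — exactly the paper's condition \eref{eq:ezg}, which there comes out of the recursion for $\ovs{P_n}(z_1,z_2)$ through the extra term $\Phi(z_1+z_2)\,\E[\ovs{F_n}(z_1+z_2)]$ that your computation drops. Since $\Phi(0)=1$ and $\Phi$ is continuous, the corrected condition still holds on a small complex ball around $0$, so your conclusion survives, but the estimate as you derive it is wrong precisely at the complex values of $z$ that the rest of your argument needs; note that for real $z$ (where your $O(1)$ is legitimate) pointwise convergence at fixed $z$ would not suffice, because the argument $\theta/\sqrt{\log n}$ moves with $n$.

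A secondary point: deducing almost-sure \emph{uniform} convergence on compacts from pointwise $L^2$ bounds via ``a standard Vitali-type argument'' is not automatic — Vitali/Montel requires a.s.\ local uniform boundedness of the $T_n$, which an $L^2$ bound at each point does not give. The paper instead proves that the covariances $\Gamma_n(z_1,z_2)=\E[T_n(z_1)T_n(z_2)]$ converge to a holomorphic limit, deduces $\E|T(x)-T(y)|^2\leq c|x-y|^2$, and uses the Kolmogorov criterion to get convergence in $C(R,\C)$; a Biggins-style argument via Cauchy's formula with uniform $L^2$ control on contours would also work, but you should make explicit which mechanism you use. With the corrected second-moment computation and a precise uniform-convergence step, your proof becomes essentially the paper's.
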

The proof of this proposition is postponed: we first show how to prove Theorem~\ref{theo:BRW} from there.
\begin{proof}[Proof of Theorem~\ref{theo:BRW}]
Note that, for all $z\in\mathbb C$, letting $m_1=m$ and $m_2=\sigma^2+m^2$ the two first moments of $\Delta$,
\ben
\Phi(z)-2+e^{-imz} &=& 1+im_1z-\frac{m_2 z^2}2 -2 + 1 - i m_1 z - m_1^2\frac{z^2}2 + o(|z|^2)\\
&=& - (\sigma^2 + 2m^2)\frac{z^2}{2} + o(|z|^2),
\een 
when $|z|$ tends to zero. Thus, in view of Lemma~\ref{lem:esperance_F_n}, for all $\theta\in\mathbb R$, we have
\ben
\E \l[F_n\Big(\frac\theta{\sqrt{\log n}}\Big)\r]
=\frac{n^{-\frac{\theta^2(\sigma^2+2m^2)}{2\log n}}}{\Gamma(2)}(1+o(1)) 
\to  \exp\l(-\frac{\theta^2}2(\sigma^2+2m^2)\r).
\een
Thanks to Proposition~\ref{prop:cvps_T_n}, for all $\theta\in\mathbb R$, 
almost surely when $n$ tends to infinity, we have
\[T_n\Big(\frac{\theta}{\sqrt{\log n}}\Big) 
= (1+o(1))\, e^{\frac{\theta^2(\sigma^2+2m^2)}{2}} F_n\Big(\frac{\theta}{\sqrt{\log n}}\Big) 
\to 1,\]
which implies that
\ben\label{eq:cvfn}
F_n\Big(\frac{\theta}{\sqrt{\log n}}\Big) \as  \exp\l(-\frac{\theta^2}2(\sigma^2+2m^2)\r).\een
Note that the deterministic map $\theta\mapsto Z_n(e^{-im\theta})$ is the Fourier transform of the random variable 
\[K_n := - m \sum_{j=1}^n B_j\] 
where the $B_j$'s are independent Bernoulli random variables of respective parameters $1/j$. 
Since $K_n$ has mean $- m H_n^{\sss (1)} \sim -m\log n$ (where 
$H_n^{\sss (p)} = \sum_{k=1}^n k^{-p}$) and variance 
$\Var(K_n)=\sum_{j=1}^n 1/j(1-1/j)= H_n^{\sss (1)}-H_n^{\sss (2)}\sim \log n$, 
by Linderberg's theorem,
\ben 
\frac{K_n + m \log n}{\sqrt{\log n}} \dd \mathcal N(0,m^2),
\een 
which, by Lévy's continuity theorem is equivalent to  
\ben\label{eq:cvzn}
e^{im\theta\sqrt{\log n}} Z_n\l(e^{-{im\theta}/{\sqrt{\log n}}}\r)\to \exp\l(-{m^2 \theta^2}/2\r).
\een
Hence, since 
\ben\label{eq:er}
F_n\Big(\frac{\theta}{\sqrt{\log n}}\Big) &=& 
\l[e^{im\theta\sqrt{\log n}} Z_n\l(e^{-{im\theta}/{\sqrt{\log n}}}\r)\r] 
\l[e^{-im\theta\sqrt{\log n}} \sum_{k=1}^{n+1} \frac{\exp\l(iX_k\,\frac{\theta}{\sqrt{\log n}}\r)}{n+1}\r],
\een
using Equations~\eref{eq:cvfn},~\eref{eq:cvzn} and~\eref{eq:er}, 
we see that the Fourier transform of $\Theta_{\sqrt{\log n},\, m \log n}\,\l((n+1)^{-1}\mathcal M_{n+1}\r)$ 
given by the second bracket in the right-hand side of Equation~\eref{eq:er}
converges pointwise a.s.\ to the Fourier transform of $\mathcal N(0, \sigma^2+m^2)$. 
By Berti \& al. \cite[Theorem~2.6]{BPR}, this implies that 
\[\Theta_{\sqrt{\log n},\,m\log n}\,\l(n^{-1}\mathcal M_{n}\r)\as  \mathcal N(0, \sigma^2+m^2),\] 
which concludes the proof. 
\end{proof}

The end of the section is now devoted to proving Proposition~\ref{prop:cvps_T_n}. To do so, we follow the strategy used in~\cite{CDJH} and start by proving an equivalent of their Lemma~4.
For all $z, z_1, z_2\in\mathbb C$, set
\ben\label{eq:form1a}
f_n(z)&:=&(n+1) F_n(z)
\een
and 
\ben\label{eq:form1b}
\ovs{F_n}(z)&:=& \frac{F_n(z)}{Z_n(e^{-imz})},\\
\label{eq:form2}
\ovs{f_n}(z)&:=&(n+1) \ovs{F_n}(z),\\
\label{eq:form3}\ovs{P_n}(z_1,z_2)&:=&{\E[\ovs{f_n}(z_1)\ovs{f_n}(z_2)]}.
\een

\begin{lem}
For all $z_1, z_2\in\mathbb C$,
\[\ovs{P_{n+1}}(z_1,z_2)=\sum_{j=0}^n \l(\beta^{\star}_j(z_1,z_2)\prod_{k=j+1}^n\alpha^\star_k(z_1,z_2)\r)+\prod_{j=0}^n \alpha^\star_j(z_1,z_2), 
\]
 where 
\[
\alpha^\star_j(z_1,z_2)=1+\frac{\Phi(z_2)+\Phi(z_1)}{j+1},\]
and, for all $j\geq 0$,
\[\beta^{\star}_j(z_1,z_2)
=\frac{\E[\ovs{f_j}(z_1+z_2)]}{j+1}\,\Phi(z_1+z_2)
=\E[\ovs{F_j}(z_1+z_2)]\Phi(z_1+z_2).\]
\end{lem}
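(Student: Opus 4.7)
The plan is to derive a first-order linear recursion for $\ovs{P_n}(z_1,z_2)$ in $n$ and then unroll it. First, from \eqref{eq:Fnt} and \eqref{eq:form1b}--\eqref{eq:form2} the prefactor $Z_n(e^{-imz})$ cancels, giving the clean representation
\[\ovs{f_n}(z) = \sum_{k=1}^{n+1} e^{iz X_k},\]
together with the one-step decomposition $\ovs{f_{n+1}}(z) = \ovs{f_n}(z) + e^{iz X_{n+2}}$. In particular, since in this ``random walk case'' with $\mathcal M_0=\delta_0$ we have $X_1=0$ (consistent with the normalisation $F_0 = T_0 = 1$), it follows that $\ovs{f_0}\equiv 1$, hence $\ovs{P_0}(z_1,z_2)=1$.

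Next I would exploit the branching-Markov-chain dynamics from Section~\ref{sec:MBC}: conditionally on ${\cal F}_n := \sigma(X_1,\dots,X_{n+1})$, the new value $X_{n+2}$ is obtained by picking an index $U_{n+1}$ uniformly at random in $\{1,\dots,n+1\}$ and setting $X_{n+2} = X_{U_{n+1}} + \Delta_{n+2}$, where $\Delta_{n+2}$ is independent of ${\cal F}_n$ with the law of $\Delta$. This gives, for any complex $z$ in the band of holomorphy,
\[\E\bigl[e^{iz X_{n+2}} \,\big|\, {\cal F}_n\bigr] = \frac{\Phi(z)}{n+1}\,\ovs{f_n}(z).\]
Expanding the product
\[\ovs{f_{n+1}}(z_1)\ovs{f_{n+1}}(z_2) = \ovs{f_n}(z_1)\ovs{f_n}(z_2) + \ovs{f_n}(z_2)e^{iz_1 X_{n+2}} + \ovs{f_n}(z_1)e^{iz_2 X_{n+2}} + e^{i(z_1+z_2)X_{n+2}},\]
taking the conditional expectation with respect to ${\cal F}_n$, and then the total expectation, the first three terms produce a factor $\alpha^\star_n(z_1,z_2)\,\ovs{P_n}(z_1,z_2)$, while the last contributes the inhomogeneous term $\frac{\Phi(z_1+z_2)}{n+1}\E[\ovs{f_n}(z_1+z_2)] = \beta^\star_n(z_1,z_2)$. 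This yields the linear recursion
\[\ovs{P_{n+1}}(z_1,z_2) = \alpha^\star_n(z_1,z_2)\,\ovs{P_n}(z_1,z_2) + \beta^\star_n(z_1,z_2).\]

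The conclusion is now a routine unrolling. Iterating this recursion from index $n$ down to $0$ gives
\[\ovs{P_{n+1}}(z_1,z_2) = \Bigl(\prod_{j=0}^n \alpha^\star_j(z_1,z_2)\Bigr)\,\ovs{P_0}(z_1,z_2) + \sum_{j=0}^n \beta^\star_j(z_1,z_2)\prod_{k=j+1}^n \alpha^\star_k(z_1,z_2),\]
and substituting $\ovs{P_0}\equiv 1$ yields exactly the claimed identity. There is no substantial obstacle: the only thing to notice is that the square expansion of $\ovs{f_{n+1}}(z_1)\ovs{f_{n+1}}(z_2)$ produces the ``diagonal'' term $e^{i(z_1+z_2)X_{n+2}}$ whose conditional expectation involves $\ovs{f_n}$ evaluated at the \emph{summed} argument $z_1+z_2$, and this is precisely the reason why the inhomogeneous term $\beta^\star_n$ is of a different nature than the multiplicative factor $\alpha^\star_n$.
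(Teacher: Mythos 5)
Your proposal is correct and follows essentially the same route as the paper: it derives the affine recursion $\ovs{P_{n+1}}=\alpha^\star_n\ovs{P_n}+\beta^\star_n$ by conditioning on the uniformly chosen parent and the independent increment (the paper phrases this as integrating against the empirical measure $\sum_{k=1}^{n+1}\delta_{X_k}/(n+1)$, which is the same computation), with the diagonal term $e^{i(z_1+z_2)X_{n+2}}$ producing $\beta^\star_n$, and then unrolls the recursion using $\ovs{P_0}=1$. Your explicit remark about $\ovs{P_0}=1$ (via $F_0=T_0=1$) only makes explicit what the paper's ``simple recursion'' leaves implicit.
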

\begin{proof} 
To get the $(n+2)$-node RRT from the $(n+1)$-node RRT,
one chooses uniformly at random a node $U(n)$ in the $(n+1)$-node RRT 
and attaches a new child to this node.
Moreover, the branching random walk at this 
new node is the value of the walk at $U(n)$ plus an increment~$\Delta$. 
Thus, for all $n\geq 0$, 
\ben\label{eq:fr}
\ovs{f_{n+1}}(z)=\ovs{f_n}(z)+ e^{iz(X_{U(n)}+\Delta_{n+1})},
\een
where $(\Delta_i)_{i\geq 1}$ is a sequence of i.i.d.\ copies of $\Delta$.
We thus have
\ben
\ovs{P_{n+1}}(z_1,z_2)
=\E\bigg[\E\Big[\Big(\ovs{f_n}(z_1)+ e^{iz_1(X_{U(n)}+\Delta_{n+1})}\Big) 
\Big(\ovs{f_n}(z_2) + e^{iz_2(X_{U(n)}+\Delta_{n+1})}\Big)~\Big|~{\cal F}_n\Big]\bigg]
\een
Recall that $\mathcal M_{n+1}/(n+1)=(\sum_{k=1}^{n+1} \delta_{X_k})/(n+1)$ is the empirical distribution of the labels of the tree and thus, 
$\int e^{izx}\, \frac{d{\cal M}_{n+1}(x)}{n+1}= \ovs{f_n}(z)/(n+1)$.
We have
\ben
\ovs{P_{n+1}}(z_1,z_2)
&=&\E\bigg[\int\Big(\ovs{f_n}(z_1)+e^{iz_1(x+\Delta)}\Big)
\Big(\ovs{f_n}(z_2)+e^{iz_2(x+\Delta)}\Big) \frac{d{\cal M}_{n+1}(x)}{n+1}\bigg]\\
&=& \E\bigg[\ovs{f_n}(z_1)\ovs{f_n}(z_2) + \ovs{f_n}(z_1)\ovs{f_n}(z_2)\frac{\Phi(z_2)+\Phi(z_1)}{n+1}
 + \ovs{f_n}(z_1+z_2)\frac{\Phi(z_1+z_2)}{n+1}\bigg],
\een 
which implies
\ben\label{eq:Pn}
\ovs{P_{n+1}}(z_1,z_2)&=&\ovs{P_n}(z_1,z_2)\l(1+\frac{\Phi(z_2)+\Phi(z_1)}{n+1}\r)
+\E[\ovs{F_n}(z_1+z_2)]{\Phi(z_1+z_2)},
\een
so that
\ben\label{eq:qfz}
\ovs{P_{n+1}}(z_1,z_2)&=&\ovs{P_n}(z_1,z_2)\alpha^\star_n(z_1,z_2)+\beta^{\star}_n(z_1,z_2).
\een
A simple recursion concludes the proof.
\end{proof} 
Up till now, we have restricted our study to $z\in D_{r_2}$ 
(the band centred around the vertical axis, and of width $2r_2$) 
on which $T_n$ is well defined for each $n\geq 1$. 
By \eref{eq:ine2}, there exists $r_3\in (0,+\infty)$ such that $S_{r_3}<\nicefrac12$. 
Let
\[r_4= \min \{r_2,r_3\}.\]
\begin{prop}\label{pro:cvm}
 There exists a closed ball~$B$ centred at 0 in $\C$, 
such that for any~$z$ in~$B$, the martingale $(T_n(z),n\geq 0)$ converges in $L^2$. 
The convergence of $(T_n,n\geq 0)$ holds almost surely in $C(B,\C)$ 
(the set of continuous functions on $B$ taking their values in $\C$, 
equipped with the topology of uniform convergence).  
\end{prop}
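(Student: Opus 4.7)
The proof proceeds in two steps, following the Joffe–Le Cam–Neveu strategy used by Chauvin et al.~\cite{CDJH}: first I establish uniform $L^2$-boundedness of the martingale $(T_n(z))$ over $z$ in some closed ball $B$ centred at $0$; then I leverage analyticity together with Cauchy's integral formula to upgrade pointwise $L^2$ convergence to almost sure uniform convergence on $B$.

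\emph{Step 1: $L^2$-boundedness.} Since $m$ and the $X_k$'s are real, a direct computation gives $\overline{F_n(z)}=F_n(-\bar z)$. Combining this with \eqref{eq:form1a}--\eqref{eq:form3} and~\eqref{eq:Fn} yields
\[
\E|T_n(z)|^2 = \frac{\ovs{P_n}(z,-\bar z)}{|Z_n(\Phi(z)+1)|^2}.
\]
The previous lemma specialised at $(z_1,z_2)=(z,-\bar z)$ gives the real quantity $\alpha_j^\star(z,-\bar z) = 1+2\Re\Phi(z)/(j+1)$ together with $\beta_j^\star(z,-\bar z) = \E[\ovs{F_j}(2i\Im z)]\,\Phi(2i\Im z)$. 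Plugging in the asymptotic~\eqref{eq:Zn} yields
\[
\prod_{j=0}^n\alpha_j^\star(z,-\bar z)\sim \frac{n^{2\Re\Phi(z)}}{\Gamma(1+2\Re\Phi(z))},\qquad |Z_n(\Phi(z)+1)|^2 \sim \frac{n^{2\Re\Phi(z)}}{|\Gamma(\Phi(z)+1)|^2},
\]
so the product term in $\ovs{P_n}$ contributes a bounded quantity. For the sum term, using $\E[\ovs{F_j}(w)]\sim j^{\Phi(w)-1}/\Gamma(\Phi(w)+1)$ and $\prod_{k=j+1}^n\alpha_k^\star\sim (n/j)^{2\Re\Phi(z)}$, one finds that the $j$-th summand of the ratio $\ovs{P_n}(z,-\bar z)/|Z_n(\Phi(z)+1)|^2$ is of order $j^{\Phi(2i\Im z)-1-2\Re\Phi(z)}$. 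At $z=0$ this exponent equals $-2$, since $\Phi(0)=1$; by continuity of $\Phi$ on $D_{r_1/2}$, its real part remains strictly below $-1$ on some closed ball $B=\mathcal B(0,r)\subset D_{r_4}$, and the associated series is summable uniformly in $z\in B$. This gives
\[
\sup_{n\geq 0}\,\sup_{z\in B}\,\E|T_n(z)|^2<+\infty,
\]
so by the $L^2$ martingale convergence theorem, $T_n(z)$ converges in $L^2$ (and a.s.) to some limit $T_\infty(z)$ for every $z\in B$.

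\emph{Step 2: almost sure uniform convergence.} Since $F_n(z)$ is a finite sum of exponentials times $Z_n(e^{-imz})$ and $\E[F_n(z)]$ is non-vanishing on $D_{r_2}$, the map $z\mapsto T_n(z)$ is holomorphic on $D_{r_4}$. Choose a closed ball $B'$ with $B\subsetneq B'\subset D_{r_4}$ on which the estimate of Step~1 still applies. Cauchy's integral formula gives, for $z\in B$ and $m\le n$,
\[
T_n(z)-T_m(z)=\frac{1}{2i\pi}\int_{\partial B'}\frac{T_n(w)-T_m(w)}{w-z}\,dw,
\]
from which, via Cauchy–Schwarz and Fubini, one deduces
\[
\E\sup_{z\in B}|T_n(z)-T_m(z)|^2 \leq C\,\sup_{w\in\partial B'}\E|T_n(w)-T_m(w)|^2,
\]
for some constant $C=C(B,B')$. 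The right-hand side tends to $0$ as $m,n\to\infty$ by the $L^2$-Cauchy property of the scalar martingales $(T_n(w))_n$ on $\partial B'$. Hence $(T_n)$ is Cauchy in $L^2(\Omega, C(B,\C))$, which, combined with the pointwise a.s.\ convergence, yields the claimed almost sure uniform convergence of $T_n$ on $B$ to a (necessarily continuous, in fact holomorphic) limit $T_\infty$.

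The main technical obstacle lies in Step~1: all asymptotic equivalents entering the computation of $\E|T_n(z)|^2$ must be controlled with error terms that are \emph{uniform} over $z\in B$, and the summability of $\sum_{j\ge 1} j^{\Re(\Phi(2i\Im z)-2\Re\Phi(z))-1}$ must hold with a uniform upper bound on the exponent. This uniformity rests on the strict inequality $\Phi(0)=1<2=2\Re\Phi(0)$, which by continuity of $\Phi$ provides the slack needed to choose $r>0$ sufficiently small for the estimates to hold uniformly in $z\in B$.
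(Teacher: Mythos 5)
Your proof is correct in substance, and it splits naturally into a part that matches the paper and a part that genuinely differs. Step~1 is essentially the paper's own argument: both compute $\E|T_n(z)|^2$ through the second-moment recursion for $\ovs{P_n}(z_1,z_2)$ evaluated at the conjugate pair (your $\Phi(z)+\Phi(-\bar z)=2\Re\Phi(z)$ is the correct specialisation, equivalent to the paper's condition \eref{eq:ezg} since $\Phi(-a+ib)=\overline{\Phi(a+ib)}$), use the asymptotics \eref{eq:Zn}--\eref{eq:zt2}, and exploit the strict inequality at $z=0$ together with continuity of $\Phi$ to obtain uniform $L^2$-boundedness on a small neighbourhood of $0$ (a ball for you, a rectangle $R$ in the paper -- immaterial). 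Step~2 is where you diverge: the paper proves normal convergence of the covariance kernel $\Gamma_n(z_1,z_2)=\E[T_n(z_1)T_n(z_2)]$ to a holomorphic limit, derives the bound $\E|T(x)-T(y)|^2\leq c|x-y|^2$ and applies Kolmogorov's continuity criterion to the limit before concluding convergence in $C(R,\C)$, whereas you run the Biggins-style route, using holomorphy of $T_n$ and Cauchy's formula on a slightly larger ball $B'$ to get the maximal estimate $\E\sup_{z\in B}|T_n(z)-T_m(z)|^2\leq C\sup_{w\in\partial B'}\E|T_n(w)-T_m(w)|^2$; this is a legitimate and arguably more self-contained way to obtain the uniform a.s.\ convergence. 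Two points should be made explicit to close your Step~2. First, you need the $L^2$-Cauchy property to be \emph{uniform} in $w\in\partial B'$: pointwise $L^2$-boundedness alone does not give this, but since $n\mapsto\E|T_n(w)|^2$ is nondecreasing (orthogonal martingale increments), each term is continuous in $w$, and the limit is continuous by your uniform asymptotics (equivalently by the normal convergence of $\Gamma_n$ in \eref{eq:zrgte}), Dini's theorem on the compact $\partial B'$ supplies the required uniformity. Second, Cauchyness in $L^2(\Omega;C(B,\C))$ plus pointwise a.s.\ convergence only gives a.s.\ uniform convergence along a subsequence as stated; to get the full sequence, either apply Doob's $L^2$ maximal inequality to the submartingale $\sup_{z\in B}|T_n(z)-T_m(z)|$ before letting $m\to\infty$, or invoke the Banach-space-valued martingale convergence theorem that the paper itself cites (Pisier~\cite{Pisier}) once $L^2$ convergence of the $C(B,\C)$-valued martingale is known. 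With these two standard additions your argument is complete.
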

In fact we prove that the random function~$T_n$ converges uniformly to a random holomorphic function~$T$ on $B$.
\begin{proof}
For all $z_1,z_2\in D_{r_4}$, we have, when~$j$ and~$n$ both go to infinity, that
\begin{align*}
\prod_{k=j+1}^n \alpha_k^{\star}(z_1,z_2)
&= \prod_{k=j+1}^n \left(1+ \frac{\Phi(z_1)+\Phi(z_2)}{k+1}\right)
= \exp\bigg([\Phi(z_1)+\Phi(z_2)]\bigg(\sum_{k=j+1}^n \frac1{k+1}+ \mathcal O(\nicefrac1{j})\bigg)\bigg)\\
&= \exp\bigg([\Phi(z_1)+\Phi(z_2)]\Big(\log\nicefrac nj  + \mathcal O(\nicefrac1j)\Big)\bigg),
\end{align*}
by Euler's formula for Harmonic sums.
Using the fact that $z_1, z_2\in D_{r_4}$, we thus get
\[\prod_{k=j+1}^n \alpha_k^{\star}(z_1,z_2) = \Big(\frac n j\Big)^{\Phi(z_1)+\Phi(z_2)} \big(1+\mathcal O(\nicefrac1j)\big).\]
Moreover, using Lemma~\ref{lem:esperance_F_n}, we have
\ben
\beta^\star_n(z_1,z_2)
&=&\frac{\Phi(z_1+z_2)}{\Gamma(1+\Phi(z_1+z_2))}n^{\Phi(z_1+z_2)-1}
+\mathcal O\big(n^{\Re(\Phi(z_1+z_2)-2}\big).
\een
We have
\begin{align*}
|\ovs{P_{n}}(z_1,z_2)|
&=\l|\sum_{j=0}^{n-1} \bigg(\beta^{\star}_j(z_1,z_2)\prod_{k=j+1}^{n-1}\alpha^\star_k(z_1,z_2)\bigg)+\prod_{j=0}^{n-1} \alpha^\star_j(z_1,z_2)\r|\\
&\leq \l|\frac{\Phi(z_1+z_2)}{\Gamma(1+\Phi(z_1+z_2))}\r|
\,\sum_{j=0}^{n-1}\l(j^{\Re(\Phi(z_1+z_2))-1}+\mathcal O\big(j^{\Re(\Phi(z_1+z_2))-2}\Big)\r)
\Big(\frac nj\Big)^{\Re(\Phi(z_1)+\Phi(z_2))}\big(1+\mathcal O(\nicefrac1j)\big)\\
& +n^{\Re(\Phi(z_1)+\Phi(z_2))}\big(1+\mathcal O(\nicefrac1n)\big)\\
&=\l|\frac{\Phi(z_1+z_2)}{\Gamma(1+\Phi(z_1+z_2))}\r|\, n^{\Re(\Phi(z_1)+\Phi(z_2))}\bigg(1+\sum_{j=0}^{n-1} j^{\Re(\Phi(z_1+z_2) - 1  -\Phi(z_1)-\Phi(z_2))}\bigg)(1+o(1)),
\end{align*}
when $n$ tends to infinity. Thus in view of Lemma~\ref{lem:esperance_F_n} and Equations~\eref{eq:form1a}, \eref{eq:form1b}, \eref{eq:form2} and  \eref{eq:form3} we have that
\begin{align}
&\l|\frac{\E\big[F_n(z_1)F_n(z_2)\big]}{\E[F_n(z_1)]\E[F_n(z_2)]}\r|
= \frac{\ovs{P_n}(z_1,z_2)}{ \E[\ovs{F_n}(z_1)]\E[\ovs{F_n}(z_2)]}\notag\\
&\hspace{1cm}=
\bigg|\frac{\Gamma(1+\Phi(z_1))\Gamma(1+\Phi(z_2))\Phi(z_1+z_2)}{\Gamma(1+\Phi(z_1+z_2))} \bigg|
\bigg(1+\sum_{j=0}^{n-1} j^{\Re(\Phi(z_1+z_2)-1  -\Phi(z_1)-\Phi(z_2))}\bigg)\,(1+o(1)),\label{eq:zrgte}
\end{align}
Note that the first term in the above product is uniformly bounded for $z_1$ and $z_2$ in $D_{r_4}$. Since, for any $z$,
\[\E\l(|T_n(z)|^2\r)=\l|\frac{\E\big[F_n(z)\ov{F_n(z)}\big]}{\E[F_n(z)]\E[\ov{F_n(z)}]}\r|.\]
For all $a,b\in \mathbb R$, $\ov{F_n(a+ib)}=F_n(-a+ib)$, implying that, by Equation~\eref{eq:zrgte}, %that %for $z=a+ib$ 
the martingale $T_n(a+ib)$ is uniformly bounded in $L^2$ if
\ben\label{eq:ezg}
\Re(\Phi(2ib) -\Phi(a+ib)-\Phi(a-ib)) <0.\een
Note that this last condition holds for all $z=a+ib$ in a rectangle $R$ containing $0$ in its interior (and included in $D_{r_4}$), since $\Phi(0)=1$ and since $\Phi$ is continuous at~0. Hence, for all $z\in R$, the martingale $T_n(z)$ converges a.s.; this is a consequence of the $L^2$-boundness, which implies $L^2$ convergence (see e.g.\ \cite[Theorem~4]{chatterji1960}). Finally, recall that in any Banach space, a martingale which converges in $L^2$ also converges a.s.\ (see e.g.\ Pisier~\cite[Theorem~1.14]{Pisier}); therefore, for all $z\in R$, $T_n(z)$ converges a.s.\ and we denote by~$T(z)$ its limit.

Let us now discuss the convergence of the process $T_n \as T$ on $R$, in a convenient functional space. The above discussion concerning the convergence at any fixed $z\in R$ 
%can be turned without any problem into 
implies straightforwardly the a.s.\ joint convergence of $(T_n(z_j),1\leq  j \leq k)$ to $(T(z_j),1\leq  j \leq k)$, for all integers $k\geq 1$ and $(z_1, \ldots, z_k)\in R^k$. 

The a.s.\ convergence of $(X_n,Y_n)$ to $(X,Y)$ implies the convergence of $X_nY_n$ to $XY$, and here, since $T_n(z_1)$ and $T_n(z_2)$ converge in $L^2$, we get that $T_n(z_1)T_n(z_2)\to T(z_1)T(z_2)$ a.s.\ and in~$L^1$, so that
\ben
\Gamma_n(z_1,z_2):=\E[T_n(z_1)T_n(z_2)]\to \Gamma(z_1,z_2):=\E[T(z_1)T(z_2)].
\een
From \eref{eq:zrgte}, we see that $\Gamma_n$ converges normally for $(z_1,z_2)\in R^2$, and since $(z_1,z_2)\mapsto \Gamma_n(z_1,z_2)$ is holomorphic, we deduce that its limit  $\Gamma$ is holomorphic too.

We face then a situation where the sequence of continuous processes $(T_n)_{n\geq 0}$ converges uniformly to~$T$ on~$R$. Now, since $\Gamma$ is holomorphic, we have
\ben
\E\l[\l|T(x )-T(y)\r|^2\r] \leq |{\Gamma(x,-x)}+{\Gamma(y,-y)}-\Re\l({\Gamma(x,-y)}+{\Gamma(-x,y)}\r)|\leq c |x-y|^2
\een
for some $c>0$, uniformly on $R$. 
By Kolmogorov criterion, the function $T$ admits a continuous modification on $R$. 
Finally, since $T_n$ is continuous for all $n$, we get that $T_n\to T$ in $C(R,\mathbb{C})$.
\end{proof}

\begin{proof}[Proof of Proposition \ref{prop:cvps_T_n}.]
From Proposition \ref{pro:cvm}, we infer that $T_n\dd T$ in  $C(R\cap \R,\C)$, that is $(T_n(\theta), \theta \in R\cap \R)\dd (T(\theta),\theta \in R\cap \R)$ for the topology of uniform convergence. 
Moreover, since $T_n(0)=1$ for all integers~$n$, we have $T(0)=1$.
Finally, since $T$ is continuous on $R\cap \R$, we get that, for all $\theta\in \R$,
\[T_n\Big(\frac{\theta}{\sqrt{\log n}}\Big) \as T(0)=1.\]
\end{proof}

\subsection{Higher dimension}
To prove Theorem~\ref{theo:BRW} in dimension $d\geq 2$, one can 
\begin{itemize}
\item either adapt the one-dimensional proof to dimension~$d$. 
This is done by considering $d$-dimensional Fourier transforms instead: take 
\[F_n(\theta)= Z_n(e^{-i\,m\cdot\theta})\sum_{k=1}^{n+1} \frac{e^{i\, \theta \cdot X_k}}{n+1},\]
for all $\theta\in\mathbb R^d$,
where $m\cdot\theta$ stands for the scalar product of $m$ and $\theta$.
The definition of $Z_n(\cdot)$ remains unchanged.
The main change to make in the above proof is in the proof of Theorem~\ref{theo:BRW} itself
where one needs to note that $Z_n(e^{i\, m\cdot\theta})$ is the Fourier transform of
\[K_n = (K_{n1}, \ldots, K_{nd}), \quad\text{ where } K_{ni}= - m_i \sum_{k=1}^n B_k,\]
and $B_k$ is a Bernoulli-distributed random variable of parameter $1/k$ (and the $B_k$ are independent).  
Note that $K_n$ has mean $-H_n^{(1)}m$ where $H_n^{(p)} = \sum_{k=1}^n k^{-p}$ 
and variance $\Var(K_n)= (H_n^{(1)}-H_n^{(2)}) m^T m$. 
Then, by Linderberg theorem, using that $H_n^{(1)} \sim \log n$ when $n$ tends to infinity, we have
\ben 
\frac{K_n + m \log n}{\sqrt{\log n}} \dd \mathcal N(0,m^T m).
\een 
Which, by Lévy's continuity theorem is equivalent to  
\ben
e^{i\sqrt{\log n}(m\cdot\theta)} Z_n\l(e^{-i\,m\cdot\theta/\sqrt{\log n}}\r)\to \exp\l(-\frac{\theta^T (m^T m) \theta}{2}\r).
\een
This replaces Equation~\eref{eq:cvzn}.
The rest of the proof can be adapted straightforwardly.
\item or make the following remark: note that the Fourier transform of $n^{-1}\mathcal M_n$ at $\theta\in\mathbb R^d$ verifies
\[F_n(\theta)=\frac1n \sum_{k=1}^n e^{i\|\theta\| \,\frac{\theta}{\|\theta\|}\cdot X_k},\]
which is the Fourier transform of $n^{-1} \sum_{k=1}^n \delta_{u \cdot X_k}$, where $u:= \theta/\|\theta\|$, taken at $\|\theta\|$.
We can thus apply the one-dimensional result to the MVPP associated to the random walk of increment $u\cdot\Delta$. Note that
\[\mathbb E[u\cdot\Delta] = m\cdot u,
\quad \text{ and }\quad
\Var(u\cdot\Delta) = u^T \Sigma^2 u - (u\cdot m)^2.\]
Thus
\[F_n\Big(\frac{\theta}{\sqrt{\log n}}\Big) 
\to e^{-\big(u^T \Sigma^2 u + (m\cdot u)^2\big){\|\theta\|^2}/{2}}
= e^{-{\big(\theta^T \Sigma^2 \theta + (m\cdot \theta)^2\big)}/2},\]
when $n$ tends to infinity, which proves the $d$-dimensional statement.
\end{itemize}

\section{Proof of Theorem \ref{th:main-2} (without-replacement case)}\label{sec:without_replacement}

In the without-replacement case, when a ball of colour~$x$ is drawn, 
it is removed from the urn, and replaced by $\kappa$ balls, whose colours are represented by ``the $\kappa$ atoms'' of the measure ${\cal R}_x$.
Following what is done in the previous sections, 
we encode the urn process by a sequence of BMCs associated to a sequence of growing trees.
A similar idea has already been used in the literature to encode $d$-colour P\'olya urns as a tool to obtain fixed point equations 
(see Knape \& Neininger~\cite{KN} and Chauvin \& al.~\cite{CMP}).

The idea is the following:
At time~0, the tree is reduced to the root $\varnothing$ labelled $L_\varnothing=c$, 
the colour of the unique ball in the urn at time 0.
At time~$n$, i.e.\ after~$n$ drawings, there are $1+n(\kappa-1)$ balls in the urn. 
The urn at time~$n$ is represented by a tree with~$n$ internal nodes, 
where each internal node has~$\kappa$ children. 
The labels of the leaves correspond to the colour of the balls in the urn at time~$n$, 
and the labels of the internal nodes, corresponds to the colour of balls that have 
been in the urn in the past, and which have been drawn and removed from the urn before time~$n$.
Choosing a ball~$b$ uniformly corresponds to choosing a leaf~$u$ uniformly at random in the tree. 
The withdrawal of the chosen ball $b$ and the addition of $\kappa$ new balls $b_1, \ldots, b_{\kappa}$ 
is encoded by adding $\kappa$ nodes to the tree, being the children $(u0,\ldots,u(\kappa-1))$ of~$u$.
As done in the with-replacement case, we now formalise this idea by coupling the MVPP with a BMC.

{\bf The random recursive $\kappa$-ary tree -- }
This random tree is defined as a Markov chain $\mathtt T_n^{\kappa}$ on the set of rooted trees 
whose nodes all have either 0 or $\kappa$ children (also called $\kappa$-ary trees). 
The tree~$\mathtt T_0^{\kappa}$ is by definition equal to $\{\varnothing\}$. 
Given $\mathtt T_n^{\kappa}$, we build $\mathtt T_{n+1}^{\kappa}$ as follows: 
take a node $u_n$ at random among the set of leaves of $\mathtt T_n^{\kappa}$, and let $\mathtt T_{n+1}^{\kappa} = \mathtt T_n^{\kappa} \cup \{u_n0, \ldots, u_n(\kappa-1)\}$.

Note that (taking $\kappa=2$) $\CBST_n = \mathtt T_n^{2}$ in distribution.

\bigskip
{\bf The enriched model -- }
As for the binary search tree, it is useful to build the enriched random recursive $\kappa$-ary tree as follows.
Recall that the Dirichlet distribution of parameters $\kappa$ and $\alpha$ has density
\[d\mu_{\kappa,\alpha}(x_1,\ldots,x_\kappa)= \frac{\Gamma(\kappa\alpha)}{\Gamma(\alpha)^\kappa}\,\prod_{i=1}^\kappa x_i^{\alpha-1}\]
on the simplex $\Delta_{\kappa-1}=\{(x_1, \ldots, x_\kappa)\colon x_i \geq 0, \sum x_i=1\}$.
In the following, we will take $\alpha=\beta-1=1/(\kappa-1)$.

With each node $u$ of the complete $\kappa$-ary tree, 
associate a random variable $X_u=(X_{u}^{\sss (1)},\ldots,X_{u}^{\sss (\kappa)})\sim \mu_{\kappa,\alpha}$. 
Using these variables, we associate to each node an interval: 
the interval associated to the root is $I_\varnothing=[0,1]$. 
To its children $1\leq j \leq \kappa$, it is
\[I_j=\l[X_{\varnothing}^{\sss (1)} + \cdots+ X_{\varnothing}^{\sss (j-1)},X_{\varnothing}^{1}+\cdots+X_\varnothing^{\sss (j)}\r],\]
with $X_\varnothing^{\sss (\kappa)} = 1$, 
so that, $(I_1, \ldots, I_\kappa)$ forms a partition of $[0,1]$, and $|I_j|=X_\varnothing^{\sss (j)}$. 
We proceed similarly, recursively: the intervals $(I_{u1}, \ldots, I_{u\kappa})$ associated to 
the children of~$u$ are obtained by forming a partition of $I_u$ in $\kappa$ parts, 
the variables $X_u^{\sss (j)}$ giving the proportion of the $j$th part: formally, 
if $I_u=[a,b]$, then 
\[I_{uj}=\l[a+(b-a)(X_{u}^{\sss (1)}+\cdots+X_{u}^{\sss (j-1)}), a+(b-a)(X_{u}^{\sss (1)}+\cdots+X_{u}^{\sss (j)})\r],\]
with $X_u^{\sss (\kappa)} = 1$.
Hence, following the sequence of intervals along a branch starting at the root, one sees a sequence of nested intervals. 

We build the tree $\mathtt G_n$ as follows:
Let $(U_i)_{i\geq 1}$ be a sequence of i.i.d.\ random variable, uniform on $[0,1]$.
Let $\mathtt G_0 = \{\varnothing\}$.
Given $\mathtt G_n$, we define $\mathtt G_{n+1}$ as follows:
let 
\[\partial \mathtt G_n := \{uj \colon u\in\mathtt G_n \text{ and }0\leq j\leq \kappa-1 \text{ and }uj\notin \mathtt G_n\}.\]
Let $u_n$ be the node of $\mathtt G_n$ such that $U_n\in I_{u_n}$. We set $\mathtt G_{n+1} = \mathtt G_n \cup \{u_n\}$.

\begin{lem}\label{lem:enriched}
We have in distribution $(\mathtt G_n)_{n\geq 0} = (\mathtt T_n^{\kappa})_{n\geq 0}$.
\end{lem}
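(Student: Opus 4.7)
My plan is to prove by induction on $n$ the following stronger statement: conditionally on the full history $(\mathtt{G}_0, u_0, \mathtt{G}_1, u_1, \ldots, \mathtt{G}_n)$, the vector of interval lengths $(|I_v|)_v$ indexed by the leaves $v$ of $\mathtt{G}_n$ follows the Dirichlet distribution with all parameters equal to $\alpha = 1/(\kappa-1)$ (one parameter per leaf). Granting this, the symmetry of such a Dirichlet gives
\[
\mathbb{P}\bigl(u_n = v \mid \mathtt{G}_0, u_0, \ldots, \mathtt{G}_n \bigr) = \mathbb{E}\bigl[|I_v| \mid \cdot \bigr] = \frac{1}{k_n},
\]
where $k_n$ denotes the number of leaves of $\mathtt{G}_n$. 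This is exactly the transition rule defining $\mathtt{T}_n^\kappa$, so together with the common initial state $\{\varnothing\}$, the equality in distribution of the whole sequences of trees will follow by iterating this identity of transition kernels.

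The inductive step rests on two classical properties of the Dirichlet distribution. First, \emph{Bayesian conjugacy}: if $X\sim \text{Dirichlet}(\alpha_1,\ldots,\alpha_k)$ indexes a partition of $[0,1]$ and $U$ is an independent uniform on $[0,1]$, then conditional on $U$ falling in the $i$-th sub-interval (an event of likelihood $x_i$) the posterior distribution of $X$ is $\text{Dirichlet}(\alpha_1,\ldots,\alpha_i+1,\ldots,\alpha_k)$. Second, \emph{Dirichlet fragmentation}: if $Y\sim \text{Dirichlet}(\alpha_1,\ldots,\alpha_k)$ and, independently, $Z\sim \text{Dirichlet}(\beta_1,\ldots,\beta_\kappa)$ with $\sum_j\beta_j = \alpha_1$, then $(Y_1Z_1,\ldots,Y_1Z_\kappa, Y_2,\ldots,Y_k)\sim \text{Dirichlet}(\beta_1,\ldots,\beta_\kappa,\alpha_2,\ldots,\alpha_k)$; this I would verify through the standard gamma-variable representation of Dirichlet vectors.

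Combining these two facts yields the inductive step. Assume the leaf-sizes of $\mathtt{G}_n$ are $\text{Dirichlet}(\alpha,\ldots,\alpha)$. Conditioning on the event $u_n = v$ raises $v$'s parameter from $\alpha$ to $\alpha+1$ by conjugacy. The enriched construction then splits $v$'s interval into $\kappa$ new sub-intervals of relative sizes $X_v \sim \text{Dirichlet}(\alpha,\ldots,\alpha)$, independent of everything so far. Fragmentation then returns a vector of $k_{n+1} = k_n + \kappa - 1$ leaf-sizes which is again $\text{Dirichlet}(\alpha,\ldots,\alpha)$ provided $\alpha + 1 = \kappa\alpha$, i.e.\ $\alpha = 1/(\kappa-1)$, which is exactly the value fixed in the construction. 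I do not foresee a real obstacle here: the arithmetic identity $\alpha+1 = \kappa\alpha$ is the whole point, and it explains why $\alpha = 1/(\kappa-1)$ is the unique choice making the ``conjugate-then-fragment'' operation a fixed point, and hence the unique choice producing a uniform leaf-selection at every step.
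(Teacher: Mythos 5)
Your proof is correct, and it takes a genuinely different route from the paper. The paper argues by matching, for each $n$, the joint law of the sizes of the $\kappa$ subtrees of the root in the two models: on the enriched side it integrates a multinomial against the Dirichlet$(\kappa,\alpha)$ density, and on the $\mathtt T_n^{\kappa}$ side it counts nested sequences of $\kappa$-ary trees via the Fuss--Catalan numbers, checking that the two expressions are proportional (and then recursing on subtrees). You instead identify the transition kernels directly: by induction, conditionally on the discrete tree history the fringe interval lengths form a symmetric Dirichlet$(\alpha,\dots,\alpha)$ vector, so by exchangeability the next insertion is uniform over the leaves, which is exactly the $\mathtt T_n^{\kappa}$ dynamics; the inductive step is Dirichlet--multinomial conjugacy followed by the fragmentation property, and the fixed-point identity $\alpha+1=\kappa\alpha$ is precisely $\alpha=1/(\kappa-1)$. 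This is the de Finetti/P\'olya-urn style argument: it avoids all Gamma-function computations, makes the induction self-contained (the paper's ``it is enough to prove'' reduction is implicit there), and explains conceptually why $\alpha=1/(\kappa-1)$ is the only parameter that works; the paper's computation, in exchange, produces the explicit Dirichlet-multinomial law of the root subtree sizes, which is in the spirit of the enriched a.s.\ limits used later when adapting Proposition~\ref{prop:inc-tree-BST}. Two small points to make explicit when writing it up: the split variable $X_v$ attached to the selected leaf is independent of the conditioning $\sigma$-field (it is unused before $v$ is chosen), which is what legitimises the fragmentation step; and the base case is the degenerate one-dimensional Dirichlet, after which the first fragmentation yields Dirichlet$(\alpha,\dots,\alpha)$ in dimension $\kappa$ as required.
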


\begin{proof}
The proof that this representation is exact can be found in \cite[Prop 20]{alb-mar} for example. 
We detail it here for completeness' sake. It is enough to prove that, for all $n\geq 1$, 
the sizes of the $\kappa$ subtrees of the root of $\mathtt G_n$ have the same distribution as the sizes of the subtrees of the $\kappa$ subtrees of the root of $\mathtt T_n^{\kappa}$.

Note that the size of the $j$th subtree of the root of $\mathtt G_n$ is given by
\[N_j^{\sss (n)}=\{2\leq m \leq n \colon U_m \in I_j\}.\]
We let$(\overline{N}^{\sss (n)}_j, 1\leq j \leq \kappa)$ be the size of the $j$th subtree of the root in $\mathtt T_n^{\kappa}$. 
Our aim is to prove that, for all integers $n\geq 1$,
\[\l(N_1^{\sss (n)}, \ldots, N_\kappa^{\sss (n)}\r)\eqd \l(\overline{N}^{\sss (n)}_1, \ldots, \overline{N}^{\sss (n)}_\kappa\r).\]
For all integers $n_1, \ldots, n_\kappa$ such that $\sum n_j =n-1$, we have
\begin{align}
\P\l(\l(N_1^{\sss (n)}, \ldots, N_\kappa^{\sss (n)}\r)=(n_1, \ldots, n_\kappa)\r)
&=
\int_{\Delta_{d-1}}\binom{n-1}{n_1,\ldots,n_\kappa} 
\l(\prod_{i=1}^\kappa x_i^{n_i}\r)  d\mu_{\kappa,\beta-1}(x_1,\ldots,x_\kappa)\notag \\
&=\binom{n-1}{n_1, \ldots, n_\kappa} 
\frac{\prod_{i=1}^\kappa \Gamma(n_i+\beta-1)}{\Gamma(n-1+\kappa(\beta-1))}\,\frac{\Gamma(\kappa(\beta-1))}{\Gamma(\beta-1)^{\kappa}}\label{eq:ethh}
\end{align}
Note that for any set of $(t_1, \ldots, t_n)$ of $\kappa$-ary trees verifying $|t_i| = i$ and $t_1\subset t_2\subset\cdots \subset t_n$, we have
\[\mathbb P(\mathtt T_1^{\kappa} = t_1, \ldots, \mathtt T_n^{\kappa} = t_n) 
= \prod_{i=1}^{n-1} \frac{1}{1+i(\kappa-1)}
=\frac{1}{\Gamma(2+(n-1)(\kappa-1))}.\]
The number of sets $(t_1, \ldots, t_n)$ of $\kappa$-ary trees verifying $|t_i| = i$ and $t_1\subset t_2\subset\cdots \subset t_n$ and such that the $\kappa$ subtrees of the root of $t_n$ have respective sizes $n_1, \ldots, n_\kappa$ is given by
\[\binom{n-1}{n_1, \ldots, n_\kappa} \prod_{i=1}^\kappa K(n_i),\]
where $K(m)$ is the number of different $\kappa$-ary trees of size $m$, for all integer $m$ (we may also describe directly the subtrees size distribution).
Given that (see for example~\cite[page~68]{FS})
\ben\label{eq:sdg}
K(m)
=\frac1{(\kappa-1) m +1} \binom{\kappa m}{m}
=\frac{\Gamma(\kappa m+1)}{\Gamma(m+1)\Gamma((\kappa-1)m+1)},\een
we get 
\ben\label{eq:jht}
\P\l(\l(\overline{N}_1^{\sss (n)}, \ldots, \overline{N}_\kappa^{\sss (n)}\r)=(n_1, \ldots, n_\kappa)\r)
=\binom{n-1}{n_1,\ldots, n_\kappa} \frac1{\Gamma(1+n(\kappa_1))} \prod_{i=1}^\kappa \frac{\Gamma(\kappa n_i+1)}{\Gamma(n_i+1)\Gamma((\kappa-1)n_i+2)}
\een 
Expanding the terms $\Gamma(n_i+\beta-1)$ in \eref{eq:ethh}, using that for all integers $m$,
\ben\label{eq:ehet}
\Gamma(m+\beta-1)
=\Gamma\l(m+\frac1{\kappa-1}\r)
=\Gamma\l(\frac{1}{\kappa-1}\r)\prod_{j=0}^{m-1} \frac{(\kappa-1)j+1}{\kappa-1}.
\een
We then see that the quantities $\P\Big(\big(N_1^{(n)},\ldots,N_\kappa^{(n)}\big) = (n_1,\ldots,n_\kappa)\Big)$ and $\P\Big(\big(\overline{N}_1^{(n)},\ldots,\overline{N}_\kappa^{(n)}\big) = (n_1,\ldots,n_\kappa)\Big)$ are proportional, 
and thus equal.
\end{proof}

\bigskip
{\bf The associated BMC -- }
The BMC associated with our $\kappa$-discrete MVPP relies on the fact that we have assumed that $\kappa \mathcal R_x$ is the sum of $\kappa$ Dirac masses (see Equation~\eref{eq:nb}). In other words, for all $x\in\PP$, the replacement measure $\mathcal R_x$ can be re-written as
\[\mathcal R_x = \frac1\kappa\, \sum_{j=1}^{\kappa} \delta_{y_j(x)},\]
where $y_1(x), \ldots, y_\kappa(x)$ are $\kappa$ (not necessarily distinct) elements of~$\PP$.
The idea behind the form of the BMC would that if a node is labelled by~$x$, 
its children should be labelled by $y_1(x), \ldots, y_{\kappa}(x)$.
But in order for the label along a branch to be a Markov chain which does not depend on the rank of the ancestors in their siblings but only on their depth, 
one should randomly shuffle the labels of siblings: for all $y=(y_1, \ldots, y_\kappa)\in\mathbb P^\kappa$, we let
\ben
{\sf Sym}(y)=\frac{1}{\kappa!}\,\sum_{\sigma\in{\cal S}(\kappa)} \delta_{(y_{\sigma(1)}, \ldots ,y_{\sigma(\kappa)})}
\een
the probability measure which is the uniform distribution on all orderings of the multiset $y$ (${\cal S}(\kappa)$ denotes the symmetric group on $\{1,\ldots,\kappa\}$). For all $x\in\PP$, we denote by 
\ben
{\sf RM}_x:= {\sf Sym}(y_1(x), \ldots, y_{\kappa}(x)).
\een
\begin{lem}
Let $X(\mathtt T_n^{\kappa})$ be the BMC on the random $\kappa$-ary recursive tree
of initial distribution~$\mathcal M_0$ and kernel
\[K(x,~\cdot~) = {\sf RM}_x \quad(\forall x\in\PP).\]
Then the process defined for all integers~$n$ by
\[\mathcal M_n^{\star} = \frac1{1+n(\kappa-1)} \sum_{u\in L(\mathtt T_n^{\kappa})} \delta_{X(u)}\]
satisfies
$(\mathcal M_n^{\star})_{n\geq 0} = (\mathcal M_n)_{n\geq 0}$.
\end{lem}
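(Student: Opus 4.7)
The plan is to follow the strategy of Lemma~\ref{lem:coupl_RRT}: identify $(\mathcal{M}_n^\star)_{n\geq 0}$ as a Markov chain in its own right, then check that its transition kernel matches that of the MVPP. I would first reduce to the canonical case where $\mathcal{M}_0$ corresponds to a single initial ball (so that the root $\varnothing$ of $\mathtt T_0^\kappa$ encodes the whole initial composition), deferring the general $\kappa$-discrete initial condition $\mathcal M_0=\kappa^{-1}\sum_y w_y\delta_y$ to a standard forest construction analogous to the end of Section~\ref{sec:generalM0}.

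To establish the Markov property, I would observe that the coupled evolution at step $n+1$ consists in picking a leaf $u_n$ uniformly in $L(\mathtt T_n^\kappa)$ and growing $\kappa$ children of $u_n$ whose labels are distributed according to $\mathsf{RM}_{X(u_n)}$. Uniformly picking a leaf selects a label $x$ with probability $|\{u\in L(\mathtt T_n^\kappa):X(u)=x\}|/(1+n(\kappa-1))=\mathcal M_n^\star(\{x\})$, so the chosen colour is drawn from $\mathcal{M}_n^\star$ itself. Furthermore, since $\mathsf{RM}_x$ is by definition the uniform distribution over the orderings of the multiset $\{y_1(x),\ldots,y_\kappa(x)\}$, the change in the empirical multiset of leaf labels is the deterministic function of $x$
\[
\sum_{u\in L(\mathtt T_{n+1}^\kappa)}\delta_{X(u)}-\sum_{u\in L(\mathtt T_n^\kappa)}\delta_{X(u)}=-\delta_x+\sum_{j=1}^\kappa\delta_{y_j(x)}=\kappa\widetilde{\mathcal R}_x.
\]
Both quantities depend on the past only through $\mathcal M_n^\star$, which proves that $(\mathcal{M}_n^\star)_{n\geq 0}$ is itself a Markov chain.

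To match the two kernels, I would then compare this increment with the MVPP update $\mathcal M_{n+1}-\mathcal M_n=\widetilde{\mathcal R}_{C_n}$ where $C_n\sim{\sf Nor}(\mathcal M_n)$. The unnormalised leaf-label measure $N_n:=\sum_{u\in L(\mathtt T_n^\kappa)}\delta_{X(u)}=(1+n(\kappa-1))\mathcal M_n^\star$ and the process $\kappa\mathcal M_n$ then satisfy the same Markov recursion $Z\mapsto Z+\kappa\widetilde{\mathcal R}_C$ with $C\sim{\sf Nor}(Z)$, both starting from $\delta_c$ under the canonical initial condition; induction on $n$ gives $N_n\eqd\kappa\mathcal M_n$ as processes, and dividing by the deterministic normalising factor $1+n(\kappa-1)$ yields the claimed identity.

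The main point requiring some care is purely bookkeeping: one must verify that the total mass stays deterministic (which follows from the balance assumption $\mathcal R_x(\mathcal P)=1$) and hence that normalisation by $1+n(\kappa-1)$ commutes with the dynamics. The symmetrisation in the definition of $\mathsf{RM}_x$ plays no role in this particular lemma, since the empirical measure of leaf labels is oblivious to sibling order; it is, however, essential to make the labels along a single branch into a Markov chain with kernel $K(x,\cdot)=\mathcal R_x(\cdot)$, which is what is then fed into Theorem~\ref{th:main} in the companion-chain argument used to prove Theorem~\ref{th:main-2}.
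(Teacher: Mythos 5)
Your proposal is correct and follows essentially the argument the paper intends: the paper gives no separate proof of this lemma, relying implicitly on the same two-step scheme as its proof of Lemma~\ref{lem:coupl_RRT} (the empirical leaf-label measure is itself a Markov chain, and its kernel — pick a colour proportionally to the current measure, then add $\kappa\wt{\mathcal R}_x$ to the unnormalised measure — coincides with that of the $\kappa$-discrete MVPP), which is exactly what you spell out, including the correct observations that the symmetrisation ${\sf RM}_x$ is invisible at the level of the empirical measure and that the total mass $1+n(\kappa-1)$ is deterministic by balance. Your bookkeeping identity $N_n\eqd\kappa\mathcal M_n$ also makes precise the normalisation implicit in the paper's statement (the identity holds after normalising $\mathcal M_n$ by its total mass), so nothing is missing.
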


\begin{rem} It is worth stressing on an important difference between the drawing without replacement case and the general case. In this latter case, the measure ${\cal M}_n$ is encoded by the node-values of the BMC  ${\cal M}_n={\cal M}_0+\sum_{u \in \RRT_n} \delta_{\RR_{X_u}}$. In the without-replacement case models (see again Remark~\ref{rem:cond}), 
the measure ${\cal M}_n$ is encoded by the leaves-values of the $\kappa$-ary tree.
\end{rem}
Following the same strategy as in the with-replacement case, we now state and prove the equivalent of Proposition~\ref{prop:inc-tree} for the random recursive $\kappa$-ary tree. Note that the random recursive $\kappa$-ary search tree has been studied in the literature for two particular values of~$\kappa$: as already mentioned, $\kappa=2$ corresponds to the random binary search tree, and the ternary case has been studied for example by Bergeron \& al.~\cite{Bergeron92varietiesof} and Albenque \& Marckert~\cite[Section 5.1]{alb-mar}. 
Following Example~1 (page 7) and Theorem~8 in Bergeron \& al.~\cite{Bergeron92varietiesof}, 
the height $H_n$ of a random node in $\mathtt T_n^{\kappa}$, follows a central limit theorem: for $\beta=1+1/(\kappa-1),$ we have 
\ben \label{eq:qgge}
\frac{H_n-\beta\log(n)}{\sqrt{\beta\log(n)}}\dd {\cal N}(0,1).
\een
\begin{prop} Let $U_n$ and $V_n$ be two uniform random nodes taken in $\mathtt T_n^{\kappa}$, we have
\ben
\l( \frac{|U_n|-\beta\log(n)}{\sqrt{\beta \log(n)}}, \frac{|V_n|-\beta\log(n)}{\sqrt{\beta \log(n)}},|U_n\wedge V_n|\r)\dd (\Lambda_1,\Lambda_2,K)
\een where  the three random variables are independent, 
$\Lambda_1$ and $\Lambda_2$ are two standard Gaussian random variables and $K$ is almost surely finite.
\end{prop}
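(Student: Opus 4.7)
The strategy is to adapt the argument used for Proposition~\ref{prop:inc-tree-BST}$(i)$ to the $\kappa$-ary setting, replacing the binary Dirichlet$(1,1)$ (uniform) weights by the Dirichlet$(\kappa,\alpha)$ weights of Lemma~\ref{lem:enriched}, with $\alpha=1/(\kappa-1)$. Throughout we work on the enriched model $\mathtt{G}_n$ constructed from an i.i.d.\ sequence $(U_i)_{i\geq 1}$ of uniform variables on $[0,1]$ and from an i.i.d.\ family $(X_u)_{u\in\mathbf X_\kappa}$ of Dirichlet$(\kappa,\alpha)$ weights attached to the nodes of the complete $\kappa$-ary tree, with associated intervals $(I_u, u\in\mathbf X_\kappa)$; by construction $|\mathtt G_n^{(u)}|$ is the number of $U_i$, $2\leq i\leq n$, falling in $I_u$, and by the strong law of large numbers $n^{-1}|\mathtt G_n^{(u)}| \to |I_u|$ almost surely for every fixed node $u$, with $|I_u|>0$ a.s.\ (the Dirichlet marginals are a.s.\ positive).

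The first step is to show that $|U_n\wedge V_n|$ converges in law to an a.s.\ finite $K$. Conditionally on the enriched structure, the probability that both $U_n$ and $V_n$ lie below a fixed node $u$ converges a.s.\ to $|I_u|^2$, so
\[
\mathbb P\bigl(|U_n\wedge V_n|\geq k\bigr)\xrightarrow[n\to\infty]{} \mathbb E\Bigl[\sum_{|u|=k}|I_u|^2\Bigr].
\]
A direct computation with Dirichlet$(\kappa,\alpha)$ gives $\mathbb E\bigl[(X_u^{\sss(j)})^2\bigr]=\tfrac{\alpha+1}{\kappa(\kappa\alpha+1)}$, and since $\kappa\alpha=\beta$ and $\alpha+1=\beta$, we have $\mathbb E\bigl[\sum_j(X_u^{\sss(j)})^2\bigr]=\beta/(\beta+1)<1$; by independence across levels, $\mathbb E\bigl[\sum_{|u|=k}|I_u|^2\bigr]=\bigl(\beta/(\beta+1)\bigr)^k$, which is summable. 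Hence the sequence $|U_n\wedge V_n|$ is tight and converges (along any subsequential limit) to an a.s.\ finite $K$.

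Next, invoke the Skorokhod representation theorem on the enriched space, so that $n^{-1}(|\mathtt G_n^{(u)}|, u\in F)\to (|I_u|, u\in F)$ holds almost surely for any finite $F\subset\mathbf X_\kappa$, and additionally that eventually $|U_n\wedge V_n|=K$. Conditionally on $U_n\wedge V_n=w$ with $|w|=K$, the pair $(U_n,V_n)$ is distributed as two independent uniform nodes, one in each of two distinct children subtrees $\mathtt G_n^{(wj_1)}$ and $\mathtt G_n^{(wj_2)}$ of $w$. By Lemma~\ref{lem:enriched} and the self-similarity of the enriched construction, conditionally on their sizes $s_1^{(n)}, s_2^{(n)}$ these subtrees are independent random recursive $\kappa$-ary trees of sizes $s_1^{(n)}, s_2^{(n)}$. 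Moreover $n^{-1}s_i^{(n)}\to|I_{wj_i}|>0$ a.s., so $\log s_i^{(n)}=\log n + O(1)$.

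Using the representation $(|U_n|,|V_n|,U_n\wedge V_n)\eqd(|w|+|U^{s_1^{(n)}}|,\,|w|+|U^{s_2^{(n)}}|,\,w)$, where $|U^s|$ denotes the depth of a uniform node in an independent $\mathtt T_s^\kappa$, the marginal CLT~\eqref{eq:qgge} applied independently in each subtree yields (on the Skorokhod space)
\[
\left(\frac{|U_n|-K-\beta\log s_1^{(n)}}{\sqrt{\beta\log s_1^{(n)}}},\;\frac{|V_n|-K-\beta\log s_2^{(n)}}{\sqrt{\beta\log s_2^{(n)}}},\;|U_n\wedge V_n|\right)\longrightarrow(\Lambda_1,\Lambda_2,K),
\]
with $\Lambda_1,\Lambda_2$ independent standard Gaussians, independent of $K$ and of the enriched weights. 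Since $\beta\log s_i^{(n)}=\beta\log n+O(1)$ and $\sqrt{\beta\log s_i^{(n)}}/\sqrt{\beta\log n}\to 1$, the $O(1)$ shifts (including $K$) are negligible once divided by $\sqrt{\beta\log n}$, giving the announced joint convergence. The main obstacle is making the tightness of $|U_n\wedge V_n|$ and the positivity of the limiting proportions $|I_u|$ precise enough to justify the substitution $\log s_i^{(n)}\rightsquigarrow\log n$; both follow cleanly from the Dirichlet$(\kappa,\alpha)$ moment computation above.
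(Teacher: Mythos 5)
Your proposal is correct and follows exactly the route the paper intends: the paper's own proof is a one-line reduction ("adapt the proof of Proposition~\ref{prop:inc-tree-BST} using the enriched Dirichlet representation of Lemma~\ref{lem:enriched}"), and you carry out precisely that adaptation — tightness of $|U_n\wedge V_n|$ via the Dirichlet second-moment identity $\E\big[\sum_j (X_u^{\sss(j)})^2\big]=\beta/(\beta+1)$ (consistent with the Geometric$(1/3)$ limit in the binary case), Skorokhod representation, conditional independence of the two children subtrees given their sizes, and the marginal CLT~\eref{eq:qgge} with $\log s_i^{\sss(n)}=\log n+O(1)$. The only (harmless) gloss is the vanishing-probability event that $U_n$ or $V_n$ coincides with the meet node, which the paper's BST proof glosses over in the same way.
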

\begin{proof} 
Using Lemma~\ref{lem:enriched}, one can adapt the arguments given in the proof of Proposition~\ref{prop:inc-tree-BST} (using again the enriched version by the Dirichlet random variables). We do not give the details.
\end{proof}

The rest of the proof is very similar to that of Theorem~\ref{th:main};
in particular, we couple the MVPP with a BMC on the $\kappa$-ary search tree 
using the following kernel:
\ben
\overline{K}(x,A_1\times\cdots\times A_\kappa)={\sf RM}_x(A_1 \times \cdots \times A_\kappa).
\een
Note that, under this kernel, the sequence of the labels given by the BMC to the nodes along a branch of the $\kappa$-ary tree (starting from the root) have the same distribution as a Markov chain of kernel $\mathcal R$. We do not give more details.
\small
\bibliographystyle{amsplain}
\bibliography{bib}
\end{document}